 \newcommand{\ROM}[1]{\mathrm{\uppercase\expandafter{\romannumeral#1}}}
  \theoremstyle{definition}
 \newtheorem{thm}{Theorem}[section]
 \newtheorem{lem}{Lemma}[section]
 \newtheorem{defn}{Definition}[section]
 \newtheorem{cor}{Corollary}[section]
 \newtheorem{rem}{Remark}[section]
 \newtheorem{prop}{Proposition}[section]
\newtheorem{ack}{Acknowledgements}   
\title[A Filtration for Isoparametric Hypersurfaces]{\textbf{A Filtration for Isoparametric Hypersurfaces in Riemannian Manifolds}}
\author[J. Q. Ge]{Jianquan Ge}\address{School of Mathematical Sciences, Laboratory of Mathematics and Complex Systems, Beijing Normal
University, Beijing 100875, China}\email{jqge@bnu.edu.cn}
\thanks {The project is partially supported by the NSFC (No.11071018, No.11001016, No.11331002 and No.11301027), the SRFDP (No.20100003120003 and No.20130003120008), and the Program for Changjiang Scholars and Innovative
Research Team in University.}
\thanks { The third author is the corresponding author. }
\author[Z. Z. Tang]{Zizhou Tang}\address{School of Mathematical Sciences, Laboratory of Mathematics and Complex Systems, Beijing Normal
University, Beijing 100875, China}\email{zztang@bnu.edu.cn}
\author[W. J. Yan]{Wenjiao Yan}
\address{School of Mathematical Sciences, Laboratory of Mathematics and Complex Systems, Beijing Normal
University, Beijing 100875, China} \email{wjyan@bnu.edu.cn}
 \subjclass[2010]{ 53C42, 53C24.}
\date{}
\keywords{isoparametric hypersurface, constant mean curvature, rank
one symmetric space, Riccati equation, Chern conjecture.}
\begin{document}

\maketitle
\begin{abstract}
This paper introduces the notion of $k$-isoparametric hypersurface
in an $(n+1)$-dimensional Riemannian manifold for $k=0,1,...,n$.
Many fundamental and interesting results ( towards the
classification of homogeneous hypersurfaces among other things ) are
given in complex projective spaces, complex hyperbolic spaces, and
even in locally rank one symmetric spaces.
\end{abstract}

\section{Introduction}
A smooth non-constant function $f: N\rightarrow \mathbb{R}$ defined on a
Riemannian manifold $N$ is called \emph{transnormal} if there is a
smooth function $b:\mathbb{R}\rightarrow\mathbb{R}$ such that
\begin{equation}\label{iso1}
|\nabla f|^2=b(f),
\end{equation}
where $\nabla f$ is the gradient of $f$. If in addition there is a
continuous function $a:\mathbb{R}\rightarrow\mathbb{R}$ such that
\begin{equation}\label{iso2}
\triangle f=a(f),
\end{equation}
where $\triangle f$ is the Laplacian of $f$, then the function $f$
is called \emph{isoparametric} (cf. \cite{Wa87},
\cite{GT09},\cite{GT10}). \'{E}lie Cartan (cf.
\cite{Ca38,Ca39,Ca3,Ca4}) pointed out: equation (\ref{iso1}) means
that the level hypersurfaces $M_t:=f^{-1}(t)$ (where $t$ are regular
values of $f$) are parallel and equation (\ref{iso2}) further
implies that these parallel hypersurfaces have constant mean
curvatures.

The preimage of the global maximum (resp. minimum) of an isoparametric (or
transnormal) function $f$ is called the \emph{focal variety} of $f$,
denoted by $M_{+}$ (resp. $M_{-}$), if nonempty. A fundamental
structural result established by \cite{Wa87} asserts that each focal variety
of a transnormal function is a smooth submanifold (may be
disconnected and have different dimensions), and each connected
component $P_t$ of $M_t$ is a tube (tubular
hypersurface) or a ``half-tube" (when $codim(P)=1$) of the same
radius around a connected component $P$ of $M_{\pm}$. A hypersurface
$M$ in $N$ is called \emph{isoparametric} if it is a level
hypersurface of some locally defined isoparametric function $f$ on
$N$. Therefore, a hypersurface $M$ in $N$ is an isoparametric
hypersurface if and only if all its nearby parallel hypersurfaces
have constant mean curvatures, which is a local property with respect
to the ambient space $N$ in general, while isoparametric functions
are global objects of $N$ that may restrict strongly the
geometry and topology of the ambient space $N$, as stated in \cite{GT09}.


The theory of isoparametric functions (hypersurfaces) originated
from studies of hypersurfaces in real space forms with constant
principal curvatures (see \cite{Th00}, \cite{Ce} for excellent
surveys). As \'{E}lie Cartan (cf. \cite{Ca38,Ca39,Ca3,Ca4})
asserted, a hypersurface in a real space form has constant principal
curvatures if and only if all its nearby parallel hypersurfaces have
constant mean curvatures, thus it is an isoparametric hypersurface
defined as before. Isoparametric hypersurfaces in Euclidean or
hyperbolic space were easily classified due to the celebrated Cartan
identity. However, it turns out that isoparametric hypersurfaces in
the unit spheres are more complicated and plentiful, and thus have
not been completely classified up to now (for the newest progress,
please see \cite{GH}, \cite{CCJ07}, \cite{Imm08}, \cite{Chi2},
\cite{DN1}, \cite{Miy12}, \cite{TY13}, etc.). In the early 1970's,
M\"{u}nzner \cite{Mu80} produced a far-reaching generalization of
Cartan's work. He showed that an isoparametric hypersurface in a
sphere $S^{n+1}$ is an open part of a level hypersurface, say $M$,
of an isoparametric function $f$ which is the restriction to
$S^{n+1}$ of a Cartan polynomial $F$. By a \emph{Cartan polynomial}
( or \emph{isoparametric polynomial} ), we mean a homogeneous
polynomial $F$ on $\mathbb{R}^{n+2}$ satisfying the
Cartan-M\"{u}nzner equations
\begin{eqnarray}
  &&|\nabla F|^2 =g^2|x|^{2g-2}, \quad
  x\in\mathbb{R}^{n+2}, \label{eq1.3}\\
  &&\Delta F
  =\frac{g^2}{2}(m_2-m_1)|x|^{g-2}\label{eq1.4},
\end{eqnarray}
where $\nabla F$, $\Delta F$ denote the gradient and Laplacian of
$F$ on $\mathbb{R}^{n+2}$ respectively, and $m_1$, $m_2$ the
multiplicities of the maximal and minimal principal curvatures of
$M$, $g=deg(F)$ the number of distinct principal curvatures of
$M$. Further, using an elegant topological method M\"{u}nzner proved the
remarkable result that the number $g$ must be $1$, $2$, $3$, $4$,
or $6$.

Note that the Cartan-M\"{u}nzner equations
(\ref{eq1.3})-(\ref{eq1.4}) of the isoparametric polynomial $F$ on
$\mathbb{R}^{n+2}$ correspond to the equations
(\ref{iso1})-(\ref{iso2}) of the isoparametric function $f$ on
$S^{n+1}$ with the following equalities:
\begin{equation}\label{CM-isop-f}
b(f)=g^2(1-f^2),\quad\quad a(f)=\frac{g^2}{2}(m_2-m_1)-g(n+g)f,
\end{equation}
which only mean that the level hypersurfaces $M_t:=f^{-1}(t)$ have
constant mean curvatures. On the other hand, due to Cartan's result,
the level hypersurfaces $M_t$ essentially have constant principal
curvatures and hence constant mean curvatures of each order  (which
are elementary symmetric polynomials of principal curvatures). This
fantastic phenomenon suggests that there are hidden $n-1$ more
equations describing the constancy of higher order mean curvatures
of an isoparametric hypersurface in a sphere for the isoparametric
function $f$ (resp. isoparametric polynomial $F$) besides the
equations (\ref{iso1})-(\ref{iso2}) (resp. Cartan-M\"{u}nzner
equations (\ref{eq1.3})-(\ref{eq1.4})). It is this observation that
stimulates us to exhibit these hidden equations (see Theorem
\ref{implicit CM eqs}) which should possibly be helpful to provide a
geometric or an algebraic proof of M\"{u}nzner's remarkable result
on $g$ mentioned above.

Observing that there do exist isoparametric hypersurfaces in complex projective
spaces with non-constant principal curvatures (cf. \cite{Wa82}), we
will be concerned with the isoparametric functions (resp.
hypersurfaces) on Riemannian manifolds satisfying these hidden
equations (resp. more constant higher order mean curvatures ). This treatment will
filter isoparametric functions (resp. isoparametric
hypersurface) by \emph{$k$-isoparametric functions} (resp.
\emph{$k$-isoparametric hypersurfaces}) on a Riemannian manifold
$N^{n+1}$ for $k=1,\cdots,n$.


We now set up some notations. First of all, for an $n$ by $n$ real symmetric
matrix (or self-dual operator) $A$ with $n$ real eigenvalues
$(\mu_1,\cdots,\mu_n)=:\mu$ and $k=1,\cdots,n,$ we denote by
$\sigma_k(A)=\sigma_k(\mu)$ the $k$-th elementary symmetric
polynomial of $\mu$, \emph{i.e.,}
\begin{equation}\label{sigmak}
\sigma_k(A)=\sigma_k(\mu)=\sum_{i_1<\cdots<i_k}\mu_{i_1}\cdots\mu_{i_k}=\sum_{i_1<\cdots<i_k}A\left(^{i_1\cdots
i_k}_{i_1\cdots i_k}\right),\quad \sigma_0(A)=\sigma_0(\mu)\equiv1,
\end{equation}
where $A\left(^{i_1\cdots i_k}_{i_1\cdots i_k}\right)$'s are the
principal $k$-minors of $A$; and denote by $\rho_k(A)=\rho_k(\mu)$ the
$k$-th power sum, \emph{i.e.,}
\begin{equation}\label{rhok}
\rho_k(A)=\rho_k(\mu)=\sum_{i=1}^n\mu_i^k=tr(A^k),\quad\rho_0(A)=\rho_0(\mu)\equiv
n.
\end{equation}
In these notations, the Newton's identities can be stated as
\begin{equation}\label{Newton identity}
k\sigma_k=\sum_{i=1}^k(-1)^{i-1}\sigma_{k-i}\rho_i,\quad\quad for~~
k=1,\cdots,n,
\end{equation}
which show in particular that for $k=1,\cdots,n$,
\begin{equation}\label{sigkrhok}
 \{\sigma_1,\cdots,\sigma_k\}~ \emph{are
constant if and only if}~ \{\rho_1,\cdots,\rho_k\}~ \emph{are
constant.}
\end{equation}
Next, on a Riemannian manifold $N^{n+1}$, we define a sequence of partial
differential operators $\{\triangle_1,\cdots,\triangle_{n+1}\}$ over
$C^{\infty}(N^{n+1})$ by
\begin{equation}\label{laplacek}
\triangle_kf:=\sigma_k(H_f), \quad for~~k=1,\cdots,n+1,
\end{equation}
where $H_f$ is the Hessian of $f$ on $N^{n+1}$. It is respectively
the Laplacian and the Monge-Amp\`{e}re operator when $k=1$ and
$k=n+1$. Note that $\triangle_k$ is nonlinear when $k\geq2$.


\begin{defn}\label{defn-kisop}
For $1\leq k\leq n$, a non-constant smooth function $f$ on a
Riemannian manifold $N^{n+1}$ is called \emph{k-isoparametric},
 if $f$ is a transnormal
function satisfying equation (\ref{iso1}), and in addition there
exist continuous functions $a_1,\cdots,a_k\in C(\mathbb{R})$, such
that
\begin{equation}\label{k-isop-a}
\triangle_if=a_i(f), \quad for~~i=1,\cdots,k.
\end{equation}
We denote by $\mathscr{I}_k(N^{n+1})$ the set consisting of $k$-isoparametric functions on $N^{n+1}$.
A hypersurface $M^n$ in $N^{n+1}$ is called \emph{k-isoparametric}
if it is a level hypersurface of some locally defined
$k$-isoparametric function $f$ on $N^{n+1}$.
\end{defn}


\begin{rem}\label{relation-hyp-func}
 For simplicity, we will call a transnormal function $f$
a \emph{0-isoparametric function}, denoted by
$f\in\mathscr{I}_0(N^{n+1})$ (and by $\triangle_0f:=|\nabla f|^2$).
Note that a $1$-isoparametric function is exactly the usual
isoparametric function introduced at the beginning of this paper.
Generally, in a Riemannian manifold $N^{n+1}$, a $k$-isoparametric
hypersurface can not determine a corresponding global
$k$-isoparametric function. However, as we stated before, a
$1$-isoparametric hypersurface in a sphere does determine a
corresponding global isoparametric function according to
Cartan-M\"{u}nzner's construction of isoparametric polynomial.
Furthermore, in a compact symmetric space $N^{n+1}$, a
$1$-isoparametric hypersurface $M^n$ also determines a corresponding
global isoparametric function $f$ on $N^{n+1}$. To show this
assertion, first we know that by \cite{HLO} $M^n$ must be an
equifocal hypersurface ( cf.\cite{TT95}, \cite{Tang}). Next, it
follows from Terng and Thorbergsson \cite{TT95} that $M^n$
determines a transnormal system on $N^{n+1}$ with t-regular foils of
codimension one, which then by Miyaoka \cite{Miy13} corresponds to a
global transnormal function $\bar{f}$ on $N^{n+1}$ whose regular
level hypersurfaces are parallel to $M^n$ and have constant mean
curvatures. Finally we get a desired global isoparametric function
$f$ on $N^{n+1}$ via $\bar{f}$ with the same level sets by some
regularization.
\end{rem}


It follows directly from the definition that the sets of
$1$-,$2$-,$\cdots$,$n$-isoparametric functions (hypersurfaces) induce a
filtration of isoparametric functions (hypersurfaces) on a
Riemannian manifold $N^{n+1}$ as:
\begin{equation}\label{isop sequence}
\Big(\mathscr{I}_0(N^{n+1})\supset\Big)\mathscr{I}_1(N^{n+1})\supset\cdots\supset\mathscr{I}_n(N^{n+1}).
\end{equation}

By a straightforward verification, we will see in the next section
that, a hypersurface $M^n$ in $N^{n+1}$ is $k$-isoparametric if and
only if all its nearby parallel hypersurfaces, say $M_{t}$
($M_{t_0}$=$M$), have constant $i$-th mean curvatures $H_i(t)$ for
$i=1,\cdots,k$, where $H_i(t):=\sigma_i(S(t))=\sigma_i(\mu(t))$ is
the $i$-th elementary symmetric polynomial of the shape operator
$S(t)$ or principal curvatures $\mu(t)=(\mu_1(t),\cdots,\mu_n(t))$
of $M_t$. In particular, $M^n$ is an $n$-isoparametric hypersurface
if and only if all its nearby parallel hypersurfaces have constant
principal curvatures. In this case, $M^{n}$ is called a
\emph{totally isoparametric hypersurface} and the corresponding
(local) function a \emph{totally isoparametric function}. In this
way, Cartan's rigidity result can be restated as:
\begin{equation}\label{cartan result}
\emph{A 1-isoparametric hypersurface in a real space form is totally
isoparametric.}
\end{equation}

Although Cartan's rigidity result can hardly hold in a general
Riemannian manifold, we will be able to extend it
to those Riemannian manifolds with some symmetries other than real space forms, as stated in the following theorems.


\begin{thm}\label{existence in CPn}
Let $\mathbb{C}P^{m}$ be the complex projective space equipped with
Fubini-Study metric of constant holomorphic sectional curvature $4$.
Then
\begin{itemize}
\item[(i)] A $1$-isoparametric hypersurface in a complex
even-dimensional projective space $\mathbb{C}P^{2n}$ is totally
isoparametric. In fact, it is homogeneous.
\item[(ii)] Each complex odd-dimensional projective space
$\mathbb{C}P^{2n+1}$ admits a $2$-isoparametric hypersurface which is not $3$-isoparametric.
\end{itemize}
\end{thm}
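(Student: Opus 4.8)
The plan is to analyze isoparametric hypersurfaces in $\mathbb{C}P^m$ via the Riemannian submersion (Hopf fibration) $\pi: S^{2m+1} \to \mathbb{C}P^m$, pulling back the hypersurface $M^{2m-1}$ to an $S^1$-invariant hypersurface $\widetilde{M}^{2m}$ in the sphere. The key structural feature is the relationship between the shape operators of $M$ and $\widetilde{M}$, governed by O'Neill's submersion formulas and the behavior of the almost complex structure $J$ relative to the normal direction. For part (i), I would first recall the classification (Wang \cite{Wa82}) of $1$-isoparametric (i.e.\ ordinary isoparametric) hypersurfaces in $\mathbb{C}P^m$; these are precisely the projections of $S^1$-invariant isoparametric hypersurfaces in the sphere. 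The crucial dichotomy is whether the normal direction $\xi$ is a principal direction of the shape operator, equivalently whether $J\xi$ lies tangent to $M$ in a controlled way. The principal curvatures of $M$ are expressible in terms of those of $\widetilde{M}$ together with a correction term coming from the $O'Neill$ tensor; in even complex dimension $2n$, a parity/linear-algebra argument should force this correction to vanish or be constant, yielding constant principal curvatures, hence total isoparametricity. Homogeneity would then follow by matching against the known homogeneous examples (tubes around totally geodesic subspaces or around the complex quadric), appealing to the classification of such low-cohomogeneity actions.

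For part (ii), the strategy is explicit construction. In $\mathbb{C}P^{2n+1}$ I would exhibit a specific hypersurface --- the natural candidate being a geodesic sphere, or a tube of appropriate radius around a totally geodesic $\mathbb{C}P^{n}$ or $\mathbb{R}P^{2n+1}$ --- whose principal curvatures are explicitly computable via Jacobi field / Riccati analysis along normal geodesics. I would compute $H_1(t)=\sigma_1$ and $H_2(t)=\sigma_2$ for the parallel family $M_t$ and verify directly that both are constant in the tangential directions (so the hypersurface is $2$-isoparametric), while exhibiting that $H_3(t)=\sigma_3$ (equivalently $\rho_3 = \operatorname{tr}(S^3)$, by the equivalence \eqref{sigkrhok}) is \emph{not} constant. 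The odd-dimensionality should enter precisely in producing a nonvanishing obstruction to the constancy of the third symmetric function: the multiplicities of the principal curvatures, which depend on the parity of the complex dimension, must be arranged so that $\sigma_1,\sigma_2$ balance but $\sigma_3$ does not.

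Concretely, I expect the principal curvatures to split into a block coming from directions orthogonal to $J\xi$ (with multiplicity depending on $m$) and the single distinguished direction $J\xi$ whose principal curvature $\lambda(t)$ differs from the others. Writing the principal curvatures of $M_t$ as $\{\mu(t)$ with multiplicity $2m-2$, and $\lambda(t)$ with multiplicity $1\}$ --- or a slightly finer splitting for a tube around $\mathbb{C}P^n$ --- the symmetric functions become $\sigma_1 = (2m-2)\mu + \lambda$, $\sigma_2 = \binom{2m-2}{2}\mu^2 + (2m-2)\mu\lambda$, and $\sigma_3$ the next such expression. I would then solve the constraint that $\sigma_1,\sigma_2$ be constant along the parallel family (as functions of $t$, since the hypersurface is a tube around a focal submanifold and the curvatures evolve by the Riccati equation), and verify numerically/algebraically at the specific radius that this forces a relation making $\sigma_3$ vary.

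The main obstacle I anticipate is part (i): controlling the correction term from the O'Neill tensor / the $J\xi$-direction uniformly, and in particular proving that the potentially non-principal mixing terms vanish so that constancy of $\sigma_1$ alone (the $1$-isoparametric hypothesis) forces constancy of \emph{all} $\sigma_i$. Unlike the real space form case where Cartan's identity does this via an algebraic relation among principal curvatures, here one must exploit the specific even-dimensional structure --- likely a dimension-counting or eigenspace-pairing argument tied to $J$ acting on the (even-dimensional) orthogonal complement of $J\xi$ --- to close the gap. The computation in part (ii) is more routine once the right hypersurface is chosen, but identifying the correct example and radius so that exactly two symmetric functions (and no more) are constant is itself delicate, and I would lean on the explicit Wang-type description of tubes and their principal curvatures to pin this down.
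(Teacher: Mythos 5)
Your plan for part (ii) fails at the choice of example. Geodesic spheres and tubes around totally geodesic $\mathbb{C}P^{n}$ or $\mathbb{R}P^{2n+1}$ are exactly the homogeneous hypersurfaces on Takagi's list: they have constant principal curvatures, hence are \emph{totally} isoparametric, so they are $k$-isoparametric for every $k$ and can never witness the failure of $3$-isoparametricity. The flaw is visible already in your ansatz: you write the principal curvatures as functions $\mu(t),\lambda(t)$ of the tube parameter alone, but then every $\sigma_i$ is automatically constant on each parallel hypersurface, so $\sigma_3$ cannot ``vary'' in the required sense. Being isoparametric but not $3$-isoparametric means $H_3$ is non-constant \emph{along a fixed level hypersurface}, which forces the principal curvatures themselves to be non-constant on it --- i.e.\ a non-Hopf hypersurface. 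The paper produces such examples by projecting an $S^1$-invariant OT-FKM polynomial of Clifford type $m=2$ on $\mathbb{R}^{4n+4}$: the implication $1$-isoparametric $\Rightarrow$ $2$-isoparametric is automatic because $\mathbb{C}P^{m}$ is Einstein (via the relations (\ref{sigma123ofS Stilde}) or the traced Riccati equation), while the failure of $3$-isoparametricity is certified by the invariant $\alpha=\langle S_{\nu}J\nu,J\nu\rangle$, computed through the explicit quantity $\Omega_F$ of (\ref{formula for alpha}), which takes the distinct values $\pm128$ at two explicit points of $F^{-1}(0)\cap S^{4n+3}$.

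For part (i), your hoped-for ``parity/eigenspace-pairing'' control of the O'Neill correction term has no mechanism, and in fact no purely local argument can succeed: part (ii) shows the identical local submersion structure admits counterexamples in odd complex dimension, so evenness must enter through global data. The paper's actual proof is a case analysis on the number $g$ of distinct principal curvatures of the lift $M^{4n}\subset S^{4n+1}$, resting on several classification and topological inputs absent from your outline: Park's theorem restricts $g\in\{2,4,6\}$ with $g=6$ forcing complex dimension $3$ (odd), so $g\in\{2,4\}$; for $g=2$, Xiao's result yields constant principal curvatures downstairs; for $g=4$, Abresch's multiplicity theorem combined with $m_1+m_2=2n$ leaves only $\min\{m_1,m_2\}=1$ or $m_1=m_2=2$. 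In the first case Takagi's classification gives homogeneity of $M$ as a principal orbit of the isotropy representation of $SO(2n+3)/S(O(2)\times O(2n+1))$, and uniqueness (up to sign) of the compatible complex structure pins down $l\equiv 2$, whence total isoparametricity via the equivalence sequence (\ref{equiv-CPC-homog}). The case $m_1=m_2=2$ is excluded topologically: M\"{u}nzner's cohomology computation forces $\chi(M)=2g=8$, while the vertical field $Jx$ is a nowhere-vanishing tangent field on $M$, forcing $\chi(M)=0$ --- a contradiction. Without these ingredients (Park, Abresch, Takagi, and the Euler-characteristic argument), your proposal has no visible route to excluding the $(2,2)$ case or to establishing homogeneity.
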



The homogeneity conclusion in (i) follows from the following sequence
of equivalent conditions for an \emph{isoparametric hypersurface}
$\widetilde{M}^{2n-1}$ in $\mathbb{C}P^{n}$ by putting results of
\cite{Wa82}, \cite{Kim}, \cite{Pa}\footnote{It was pointed out by
\cite{Xiao} that there exist some mistakes in \cite{Pa}.
However, the conclusions we cited are
correct.} and \cite{Xiao} together:
\begin{eqnarray}\label{equiv-CPC-homog}
&&\widetilde{M}~ \emph{has constant principal
curvatures}\\
&\Leftrightarrow&~\widetilde{M}~ \emph{is
Hopf, i.e.,}~ J\tilde{\nu} ~\emph{is
principal}\nonumber\\
&\Leftrightarrow&\emph{one of the focal
submanifolds is complex}\nonumber\\
&\Leftrightarrow&
\widetilde{M}~ \emph{is homogeneous (i.e. an open part of a
homogeneous hypersurface)}\nonumber\\
&\Leftrightarrow&l\equiv2\Leftrightarrow
l\equiv const\Leftrightarrow \tilde{g}\equiv
const\nonumber\\
&\Leftrightarrow&\widetilde{M}~ \emph{is totally
isoparametric,}\nonumber
\end{eqnarray}
where $\tilde{\nu}$ is a unit normal vector field on
$\widetilde{M}$, $J$ the canonical complex structure of
$\mathbb{C}P^n$, $\tilde{g}$ the number of distinct principal
curvatures of $\widetilde{M}$ and $l$ the number of non-horizontal
eigenspaces of the shape operator on $M:=\pi^{-1}(\widetilde{M})$ by
the Hopf fibration  $\pi: S^{2n+1}\rightarrow \mathbb{C}P^n$.

It is worth to point out that hypersurfaces with constant principal
curvatures in any Riemannian manifold other than a real space form
are far from being classified; and even the set of $g$, the number
of distinct principal curvatures, has not been determined so well as
M\"{u}nzner did for such hypersurfaces in spheres (see \cite{Bern10}
for a detailed survey). Combining with the remarkable classification
of homogeneous hypersurfaces in complex projective spaces by Takagi
\cite{Tak}, Theorem $1.1$ (i) classifies completely isoparametric
hypersurfaces in $\mathbb{C}P^{2n}$ indeed. Our classification
should be compared with the case of complex hyperbolic space
$\mathbb{C}H^n$ in which \cite{DD10} recently constructed
inhomogeneous examples of isoparametric hypersurfaces for each
$n\geq3$.

Examples in Theorem $1.1$(ii) are constructed explicitly by projecting certain OT-FKM-type
isoparametric hypersurfaces in spheres by the Hopf fibration. Here,
the deduction that $1$-isoparametric is sufficient for $2$-isoparametric
in $\mathbb{C}P^n$ (or more generally in an Einstein manifold) can
be easily seen from relations of the shape operators of $M$ and
$\widetilde{M}$ by the Hopf fibration (or from the Riccati
equation).

A deeper exploration of these relations by the Hopf fibration
and the equivalence sequence (\ref{equiv-CPC-homog}) will lead us to a
classification of isoparametric hypersurfaces in $\mathbb{C}P^n$
with constant 3rd mean curvatures as follows:


\begin{thm}\label{H3classification in Cpn}
A $1$-isoparametric hypersurface in $\mathbb{C}P^n$ with constant
3rd mean curvature, \emph{i.e.,} $H_3\equiv const$, is totally
isoparametric and hence homogeneous.
\end{thm}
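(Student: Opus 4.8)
The plan is to pass to the Hopf fibration $\pi:S^{2n+1}\to\mathbb{C}P^n$ and reduce the constancy of $H_3$ to that of a single scalar attached to the structure direction, after which the equivalence chain (\ref{equiv-CPC-homog}) finishes the argument. First I would lift $\widetilde M$ to $M:=\pi^{-1}(\widetilde M)\subset S^{2n+1}$. Since $\pi$ is a Riemannian submersion with totally geodesic (hence minimal) fibres, the defining $1$-isoparametric function $f$ pulls back to an isoparametric function $f\circ\pi$ on the sphere, so $M$ is an isoparametric hypersurface of $S^{2n+1}$ and therefore, by M\"unzner's theory, has constant principal curvatures $\lambda_1>\cdots>\lambda_g$ with $g\in\{1,2,3,4,6\}$. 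In particular every power sum $\rho_k(S)=\operatorname{tr}(S^k)$ of the shape operator $S$ of $M$ is constant.

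Next I would record the shape-operator relation induced by $\pi$. Let $\nu$ be the horizontal lift of a unit normal $\tilde\nu$ of $\widetilde M$, let $\xi$ be the unit vertical (Hopf) field, and set $\eta:=J\nu$, the horizontal lift of the structure vector $\tilde\xi:=J\tilde\nu$ of $\widetilde M$. A short computation with the flat connection of $\mathbb{C}^{n+1}$ gives $S\xi=-\eta$, whence $\langle S\xi,\xi\rangle=0$ and $|S\xi|^2=1$; moreover $\widetilde S=PSP$, where $P$ is the orthogonal projection of $TM$ onto $\xi^{\perp}\cong T\widetilde M$. Writing $SP=S-(S\xi)\xi^{\top}$ and expanding $\operatorname{tr}(SP)^k$ with the two relations above gives
\begin{equation*}
\rho_1(\widetilde S)=\rho_1(S),\qquad \rho_2(\widetilde S)=\rho_2(S)-2,\qquad \rho_3(\widetilde S)=\rho_3(S)-3\langle S^3\xi,\xi\rangle .
\end{equation*}
Because $\langle S^3\xi,\xi\rangle=\langle S\eta,\eta\rangle=\langle \widetilde S\tilde\xi,\tilde\xi\rangle=:\alpha$ is exactly the Hopf normal curvature of $\widetilde M$, and because Newton's identities (\ref{Newton identity})--(\ref{sigkrhok}) make the constancy of $H_1,H_2,H_3$ equivalent to that of $\rho_1,\rho_2,\rho_3$, I obtain the decisive reduction: $H_3\equiv\mathrm{const}$ if and only if $\alpha\equiv\mathrm{const}$. (The first two identities re-prove that $1$-isoparametric already forces $H_2\equiv\mathrm{const}$ in the Einstein manifold $\mathbb{C}P^n$.)

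I would then decompose $\xi=\sum_a\xi_a$ over the (constant) eigenspaces of $S$ and set $t_a:=|\xi_a|^2\ge 0$, so that $l=\#\{a:t_a>0\}$ is the integer of (\ref{equiv-CPC-homog}) and $\widetilde M$ is Hopf, equivalently totally isoparametric, exactly when $l\equiv 2$. The three automatic identities together with the hypothesis become the four moment equations $\sum_a\lambda_a^{\,j}t_a=p_j$ for $j=0,1,2,3$, with $p_0=1,\ p_1=0,\ p_2=1$ and $p_3=\alpha$ all constant. When $g\le 4$ the relevant $g\times g$ Vandermonde matrix in the distinct numbers $\lambda_1,\dots,\lambda_g$ is invertible, so these equations force every $t_a$ to be constant; hence $l$ is constant and, by (\ref{equiv-CPC-homog}), $\widetilde M$ is totally isoparametric and homogeneous.

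The main obstacle is the case $g=6$, where four moment conditions underdetermine the six unknowns $t_a$ and the purely algebraic step collapses. To settle it I would feed in the geometry that the moments discard: writing $\widetilde S\tilde\xi=\alpha\tilde\xi+\beta U$ with $U\perp\tilde\xi$, $\beta\ge 0$ (so $\widetilde M$ is Hopf iff $\beta\equiv 0$), I would differentiate $\alpha$ by means of the Codazzi equation of $\widetilde M\subset\mathbb{C}P^n$ and the identity $\nabla_X\tilde\xi=\phi\widetilde S X$, and then combine the resulting gradient identity under $\alpha\equiv\mathrm{const}$ with the rigidity special to $g=6$ (equal multiplicities $m\in\{1,2\}$, and the $J$-compatibility of the Hopf lift, which pairs eigenspaces of $S$ and constrains the $t_a$). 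I expect this to force $\beta\equiv 0$, i.e.\ $\widetilde M$ Hopf, once more concluding via (\ref{equiv-CPC-homog}). The technical heart of the whole proof is thus precisely this last step: promoting constancy of the single scalar $\alpha$ to the vanishing of the full non-Hopf defect $\beta$ when the lifted family carries six distinct principal curvatures.
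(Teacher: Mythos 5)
Your lift, the relation $S_\nu(Jx)=-J\nu$, the reduction of $H_3\equiv const$ to constancy of the scalar $\alpha=\langle S_\nu J\nu,J\nu\rangle$, and the moment equations $\sum_a\lambda_a^j\phi_a^2=(1,0,1,\alpha)$ for $j=0,1,2,3$ reproduce the paper's argument faithfully: the paper derives the same reduction via the $\sigma_i$-identities (\ref{sigma123ofS Stilde}) and Corollary \ref{3isop-alpha}, and your Vandermonde step for $g\le 4$ is exactly how the paper settles $g=2$ and $g=4$ (with the cosmetic difference that the paper quotes Park's restriction $g\in\{2,4,6\}$ for $S^1$-invariant lifts, while your Münzner-only bound is handled uniformly by the Vandermonde). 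Up to that point the proposal is correct.

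The genuine gap is the case $g=6$, which you yourself flag as ``the technical heart'' and leave at the level of an expectation (``I expect this to force $\beta\equiv 0$''): no argument is given that differentiating $\alpha$ via Codazzi closes the case, and in fact the paper resolves it by an input of a different kind that your framework cannot produce. By Park, $g=6$ forces $m_1=m_2=1$ (your ``$m\in\{1,2\}$'' is already too generous), and by a realization result of Xiao, for \emph{every} $(c_1,c_2,c_3)$ with $c_1^2+c_2^2+c_3^2=1$ there is a point $x\in M$ at which $\phi_i(x)^2=c_i^2/(1+\lambda_i^2)$ and $\phi_{i+3}(x)^2=c_i^2/(1+\lambda_{i+3}^2)$, whence $\alpha(x)=(\lambda_1+\lambda_4)c_1^2+(\lambda_2+\lambda_5)c_2^2+(\lambda_3+\lambda_6)c_3^2$ takes the distinct values $\lambda_1+\lambda_4$ and $\lambda_2+\lambda_5$ at suitable points. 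So when $g=6$ the function $\alpha$ is \emph{never} constant, and the hypothesis $H_3\equiv const$ rules this case out vacuously; the correct goal is non-constancy of $\alpha$, not promoting constancy of $\alpha$ to the Hopf condition $\beta\equiv 0$. This is precisely the information your four moment equations discard: with six unknowns $\phi_a^2$ they leave a two-parameter family of admissible solutions, and Xiao's theorem says this whole family is actually realized on $M$ — so no pointwise algebraic identity in the $\phi_a$ (nor any consequence of constancy of $\alpha$ alone) can decide the case. Without supplying this global realization input, or a genuine proof of your Codazzi claim, your proposal does not prove the theorem for lifts with six distinct principal curvatures.
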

We expect this theorem to play a special role in solving Chern
conjecture, which asserts that a closed hypersurface in a sphere
with constant 1st and 2nd mean curvatures must be an isoparametric
hypersurface (cf. \cite{SW08}, \cite{GT5}). This conjecture has been
proved only for the case of $3$-dimensional closed hypersurfaces in
$S^{4}$, while remains open for higher dimensional cases. On the
other hand, one has not any example of inhomogeneous hypersurface
with constant principal curvatures in Riemannian symmetric spaces
other than real space forms (cf. \cite{Bern10}). It turns out that
there are some relations between these two questions as stated in
the following.


\begin{cor}\label{Chern exam}
Suppose that $\widetilde{M}$ is an inhomogeneous hypersurface in
$\mathbb{C}P^n$ with constant mean curvatures
$H_1,H_2,H_3$. Then the inverse image $M=\pi^{-1}(\widetilde{M})$ in
$S^{2n+1}$ under the Hopf fibration is a non-isoparametric
hypersurface with constant first mean curvature $H_1$ and constant
second mean curvature $H_2-1$, giving a counterexample
to Chern conjecture.
\end{cor}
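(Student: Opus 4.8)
The plan is to compare the shape operator $S$ of $M=\pi^{-1}(\widetilde M)$ in $S^{2n+1}$ with the shape operator $\tilde S$ of $\widetilde M$ in $\mathbb{C}P^n$ through the Hopf fibration, and then simply read off $H_1$ and $H_2$. Since $\pi$ is a Riemannian submersion with totally geodesic fibres, the unit normal $\nu$ of $M$ is the horizontal lift of $\tilde\nu$, and $T_pM$ splits as $\mathbb{R}V\oplus T_{\pi(p)}\widetilde M$ (lifted), where $V$ spans the vertical Hopf direction and the horizontal part carries the distinguished Reeb direction $W=J\nu$, the lift of $J\tilde\nu$. Using the O'Neill tensor of $\pi$ (equivalently the Riccati equation along the normal geodesics), a direct computation gives $\langle SV,V\rangle=0$, $\langle SV,X\rangle=\pm\langle W,X\rangle$ for horizontal $X$, and $\langle SX,Y\rangle=\langle\tilde S\tilde X,\tilde Y\rangle$ for horizontal $X,Y$. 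Hence, taking an orthonormal basis of $T\widetilde M$ whose first vector is $W$, the operator $S$ is the bordered symmetric matrix
\begin{equation*}
S=\begin{pmatrix}0 & e^{T}\\ e & \tilde S\end{pmatrix},\qquad e=(1,0,\ldots,0)^{T}.
\end{equation*}

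From this block form the first two mean curvatures follow by routine symmetric‑function algebra: $\mathrm{tr}\,S=\mathrm{tr}\,\tilde S$ and $\mathrm{tr}\,S^{2}=\mathrm{tr}\,\tilde S^{2}+2$ (the $+2$ coming entirely from the off‑diagonal $e$, since $e^{T}e=1$ and $\mathrm{tr}(ee^{T})=1$), whence
\begin{equation*}
H_1(M)=\sigma_1(S)=\sigma_1(\tilde S)=H_1(\widetilde M),\qquad H_2(M)=\sigma_2(S)=\sigma_2(\tilde S)-1=H_2(\widetilde M)-1.
\end{equation*}
As $\widetilde M$ has constant $H_1,H_2$, the hypersurface $M$ has constant first mean curvature $H_1$ and constant second mean curvature $H_2-1$. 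Moreover $M$ is closed, being the preimage of the closed hypersurface $\widetilde M$ under $\pi$ with compact fibre $S^{1}$.

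It remains to show that $M$ is not isoparametric, and here I argue by contradiction. If $M$ were isoparametric in $S^{2n+1}$, its nearby parallel hypersurfaces $M_t$ would all have constant mean curvature. Because $M$ is $S^{1}$‑invariant and $\nu$ is horizontal, each $M_t$ is again $S^{1}$‑invariant and projects under $\pi$ onto the parallel hypersurface $\widetilde M_t$ of $\widetilde M$ at the same distance; applying the identity $H_1(M_t)=H_1(\widetilde M_t)$ fibrewise shows that every $\widetilde M_t$ has constant mean curvature, so that $\widetilde M$ is a $1$‑isoparametric hypersurface of $\mathbb{C}P^n$. Since $\widetilde M$ also has constant $H_3$ by hypothesis, Theorem \ref{H3classification in Cpn} forces $\widetilde M$ to be totally isoparametric and hence homogeneous, contradicting inhomogeneity. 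Therefore $M$ is a closed, non‑isoparametric hypersurface of $S^{2n+1}$ with constant $H_1$ and constant $H_2$, exactly a counterexample to the Chern conjecture.

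The main obstacle is the shape‑operator computation of the first paragraph: one must carefully track the coupling between the vertical Hopf direction $V$ and the horizontal Reeb direction $W$ produced by the O'Neill tensor, since it is precisely this off‑diagonal entry (and not any change in the block $\tilde S$) that produces the $+2$ in $\mathrm{tr}\,S^{2}$ and hence the $-1$ shift in $H_2$. A secondary, more conceptual point is the descent step, namely justifying that an isoparametric $M$ projects to a $1$‑isoparametric $\widetilde M$, so that Theorem \ref{H3classification in Cpn} may legitimately be invoked; this is where the constancy of $H_3$ on $\widetilde M$ is used, and it is what rules out the genuinely isoparametric (but possibly inhomogeneous) hypersurfaces of $\mathbb{C}P^n$ described in the equivalence sequence (\ref{equiv-CPC-homog}).
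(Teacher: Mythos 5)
Your proof is correct and follows essentially the same route as the paper: the bordered block form of the shape operator under the Hopf fibration gives $\sigma_1(S_\nu)=\sigma_1(\widetilde{S}_{\tilde\nu})\circ\pi$ and $\sigma_2(S_\nu)=\sigma_2(\widetilde{S}_{\tilde\nu})\circ\pi-1$ (the paper's identities (\ref{sigma123ofS Stilde})), and non-isoparametricity is obtained by the same contradiction via Theorem \ref{H3classification in Cpn}. The only cosmetic difference is that you re-derive the descent step (an isoparametric $M$ forces $\widetilde{M}$ to be isoparametric), which the paper simply cites as Proposition \ref{lift of isop to Sn}, due to Wang.
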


In general, the ambient space $N$ is lack of such ``satisfied
structures" (\emph{e.g.}, Hopf fibration, complex structure,
explicit representation of curvature tenser, \emph{etc.}) as
$\mathbb{C}P^{n}$, resulting in obstructions for us to get rigidity
results as Theorems \ref{existence in CPn}, \ref{H3classification in
Cpn} for $CP^{n}$. However, when $N$ is a complex space form or more
generally a locally rank one symmetric space, there still exist
certain symmetries of the curvature tensor, which make the Riccati
equation more useful in dealing with parallel hypersurfaces in such
spaces than in general Riemannian manifolds as in \cite{GT10}. For
example, by making use of the Riccati equation, we obtain the
following rigidity result (compared with Theorem
\ref{H3classification in Cpn} where $H_3\equiv const$ is an
assumption weaker than $3$-isoparametric):


\begin{thm}\label{3isop rk one sp}
A $3$-isoparametric hypersurface $M^n$ in a locally rank one
symmetric space $N^{n+1}$ is $5$-isoparametric. If in addition
$N^{n+1}$ is locally a complex space form, then $M^n$ is totally
isoparametric.
\end{thm}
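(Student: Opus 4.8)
The plan is to run the Riccati equation along the normal geodesics of the parallel family and to feed the $3$-isoparametric hypotheses into it. Fix a unit-speed normal geodesic $\gamma(t)$ to the level hypersurfaces and parallel-translate, so that the shape operator $S=S(t)$ of $M_t$ becomes a curve of symmetric operators on a fixed Euclidean space satisfying $S'=S^2+R$, where $R(\cdot)=\mathcal{R}(\cdot,\gamma')\gamma'$ is the normal Jacobi operator (cf. \cite{GT10}). Since $N$ is locally symmetric, $\mathcal{R}$ is parallel, so in the parallel frame $R$ is a \emph{constant} symmetric matrix; and since $N$ has rank one, $R$ has at most two eigenvalues on $\gamma'^{\perp}$ and can be written $R=cI+3c\,\Pi$, where $\Pi$ is the orthogonal projection onto the parallel ``vertical'' distribution $V$ (of dimension $d\in\{0,1,3,7\}$, spanned by the images of $\gamma'$ under the complex/quaternionic/Cayley structure), so that $\Pi^2=\Pi$. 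By (\ref{sigkrhok}), being $k$-isoparametric is equivalent to the power sums $\rho_1,\dots,\rho_k$, $\rho_j=\mathrm{tr}(S^j)$, being \emph{basic}, i.e. constant on each level set (functions of $t$ alone); the $t$-derivative of a basic function is again basic.

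Introducing the auxiliary traces $E_k:=\mathrm{tr}(S^k\Pi)$ and differentiating along $\gamma$ using $S'=S^2+R$ and $R=cI+3c\Pi$, I would record the recursions
\begin{align*}
\rho_k' &= k\rho_{k+1}+ck\,\rho_{k-1}+3ck\,E_{k-1},\\
E_k' &= k\,E_{k+1}+ck\,E_{k-1}+3c\sum_{j=0}^{k-1} F_{j,\,k-1-j},\qquad F_{a,b}:=\mathrm{tr}(\Pi S^a\Pi S^b).
\end{align*}
Assuming $M$ is $3$-isoparametric, $\rho_1,\rho_2,\rho_3$ and their derivatives are basic. The case $k=2$ of the first recursion solves for $E_1$ in terms of $\rho_2',\rho_3,\rho_1$, so $E_1$ is basic (here $\rho_3$ basic, i.e. $3$-isoparametricity, is essential, and one needs $c\neq0$). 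Since $F_{0,0}=\mathrm{tr}(\Pi)=d$ collapses the quadratic term in the case $k=1$ of the second recursion, one gets $E_2=E_1'-4cd$, hence $E_2$ basic; the case $k=3$ of the first recursion then makes $\rho_4$ basic, i.e. $M$ is $4$-isoparametric. Finally $F_{0,1}=\mathrm{tr}(\Pi^2 S)=E_1$ collapses the case $k=2$ of the second recursion, giving $E_3=\tfrac12(E_2'-8cE_1)$, so $E_3$ is basic, and the case $k=4$ of the first recursion makes $\rho_5$ basic. Thus $M$ is $5$-isoparametric. What makes this go through is that every ``doubly vertical'' invariant $F_{a,b}$ arising up to this order carries a zero index and so reduces, via $\Pi^2=\Pi$, to an $E_j$.

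For the complex space form case one has $d=1$ and $\Pi=\xi\,\xi^{\top}$ with $\xi=J\gamma'$ a parallel unit field, so every mixed invariant factorises: $F_{a,b}=\langle S^a\xi,\xi\rangle\langle S^b\xi,\xi\rangle=e_ae_b$ with $e_k:=E_k$. The second recursion then closes in the scalar sequence $e_k$, and since $e_0=1$ and $e_1$ is basic, solving $e_k'=(\text{basic})$ for $e_{k+1}$ propagates basicness to all $e_k$ by induction; feeding this back into the first recursion propagates basicness to all $\rho_k$. Hence every mean curvature of every nearby parallel hypersurface is constant, i.e. $M$ is totally isoparametric (and one may then invoke (\ref{equiv-CPC-homog}) for the further geometric consequences). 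I expect the main obstacle to be precisely the failure of this closure when $d>1$: at the next order the genuinely new invariant $F_{1,1}=\mathrm{tr}(\Pi S\Pi S)$, which is \emph{not} reducible to the $E_j$ once $\dim V>1$, enters the recursion, and this is exactly why in the general rank one case the argument halts at $5$-isoparametricity. Some care is also needed near focal points, where $S$ blows up; the computation is carried out on the regular tubular part and the conclusions extended by continuity.
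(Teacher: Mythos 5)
Your proposal is correct and follows essentially the same route as the paper: the Riccati equation $S'=S^2+R$ along normal geodesics, the parallelism and two-eigenvalue spectral structure of the normal Jacobi operator, trace recursions in which every mixed invariant carrying a zero index collapses (yielding $4$- and then $5$-isoparametricity, and halting exactly where the paper's argument halts, at the irreducible invariant $F_{1,1}=\mathrm{tr}(\Pi S\Pi S)$ when the second eigenspace has dimension greater than one), and closure of the recursion in the complex space form case because that eigenspace is one-dimensional. Your bookkeeping via $\Pi$, $E_k=\mathrm{tr}(S^k\Pi)$ and the rank-one factorization $F_{a,b}=e_ae_b$ is an equivalent, slightly slicker rendering of the paper's block decomposition $S(t)^i=\left(\begin{smallmatrix}A_i&B_i\\ B_i^t&C_i\end{smallmatrix}\right)$ with scalar $C_i$ and its lemma on $\mathrm{tr}\bigl(S^jR\,S^{i-1-j}R\bigr)$, the two sets of quantities being related by the invertible affine substitution $\Gamma_{i1}=c\,\rho_i+3c\,E_i$.
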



\begin{rem}\label{Jacobi remark}
The key point in the proof of this theorem is that the normal Jacobi
operator $K_{\nu}:\mathcal {T}M\rightarrow\mathcal {T}M$ defined by
$K_{\nu}(X):=R(\nu,X)\nu=(\nabla_{[\nu,X]}-[\nabla_\nu,\nabla_X])\nu$,
for $X\in\mathcal {T}M$, where $\nu$ is a unit normal vector field
on $M$, has constant eigenvalues and is parallel along the normal
geodesics. In fact, if both $tr(K_{\nu})$ and $tr(K_{\nu}^2)$ are
constant, in the same way we find that $3$-isoparametric is
sufficient for $4$-isoparametric in a locally symmetric space.
Fortunately, there are many locally symmetric spaces with constant
$tr(K_{\xi})$ and constant $tr(K_{\xi}^2)$, independent of the
choice of the unit tangent vector $\xi$. Such locally symmetric
spaces are involved in the Lichnerowicz conjecture and have been
classified in \cite{CGW}.
\end{rem}

The following rigidity result is another application of the
Riccati equation. To state it, we need to introduce the concept of
\emph{curvature-adapted} or \emph{compatible} hypersurfaces (resp.
submanifolds), namely, whose normal Jacobi operator
$K_{\nu}$ and shape operator $S_{\nu}$ (resp. $S_{\nu}\oplus I$)
commute, or equivalently, are simultaneously diagonalizable for each
unit normal vector $\nu$ (cf. \cite{Bern91}, \cite{Gr04}).


\begin{thm}\label{compatible rk one sp}
Let $M^n$ be a curvature-adapted hypersurface in a locally rank one
symmetric space $N^{n+1}$. If either
\begin{itemize}
\item[(i)] $M^n$ has constant principal curvatures, or
\item[(ii)]$M^n$ is a $1$-isoparametric hypersurface,
\end{itemize}
then $M^n$ is totally isoparametric.
\end{thm}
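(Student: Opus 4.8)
The plan is to exploit the Riccati equation for the family of parallel hypersurfaces together with the special structure of the normal Jacobi operator $K_\nu$ on a locally rank one symmetric space. Parametrizing the normal geodesics by arclength $t$, the shape operators $S(t)$ of the parallel hypersurfaces $M_t$ satisfy a matrix Riccati equation $\frac{D}{dt}S(t)=S(t)^2+K_{\nu}(t)$ along each normal geodesic (up to the usual sign convention, which affects nothing below). As recalled in Remark \ref{Jacobi remark}, in a locally rank one symmetric space $K_\nu$ is parallel along the normal geodesics and has constant eigenvalues; moreover, by two-point homogeneity it has at most two distinct eigenvalues $\lambda\neq\lambda'$ with fixed multiplicities $d$ and $n-d$, with eigendistributions $V_\lambda,V_{\lambda'}\subset TM$. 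Since $M$ is curvature-adapted, $S$ and $K_\nu$ commute, so at each point of $M$ I may choose an orthonormal frame diagonalizing both and parallel-transport it along the normal geodesics; because $K_\nu$ is parallel with constant eigenvalues, this frame keeps $K_\nu(t)$ equal to the constant diagonal matrix $\Lambda=\mathrm{diag}(\lambda_1,\dots,\lambda_n)$, $\lambda_i\in\{\lambda,\lambda'\}$. In this parallel frame the Riccati equation reads $\tilde S'(t)=\tilde S(t)^2+\Lambda$, and since the diagonal matrices are invariant under $X\mapsto X^2+\Lambda$, by uniqueness $\tilde S(t)$ stays diagonal. Thus the curvature-adapted condition is preserved along the flow and the Riccati equation decouples into the scalar equations $\mu_i'(t)=\mu_i(t)^2+\lambda_i$; equivalently, on each eigendistribution the restricted shape operator evolves exactly as in a real space form of curvature $\lambda$ (resp.\ $\lambda'$).

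For (i), the subbundle $V_\lambda=\ker(K_\nu-\lambda)$ is smooth (constant multiplicities) and $S$-invariant (as $[S,K_\nu]=0$), so $S$ restricts to it as a symmetric operator whose eigenvalues are, by continuity on the connected $M$, confined to the fixed finite set of principal curvatures of $M$, hence constant on $M$; the same holds on $V_{\lambda'}$. Each scalar equation $\mu_i'=\mu_i^2+\lambda_i$ then has initial datum and coefficient $\lambda_i$ constant over $M$, so its solution $\mu_i(t)$ is constant over $M$ for every $t$. Therefore all $M_t$ have constant principal curvatures and $M$ is totally isoparametric.

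For (ii), write $\rho_k(t)=\mathrm{tr}\,S(t)^k$ and split $\rho_k=P_k+Q_k$ with $P_k=\mathrm{tr}((S|_{V_\lambda})^k)$ and $Q_k=\mathrm{tr}((S|_{V_{\lambda'}})^k)$. The hypothesis says $\rho_1(t)$ is constant over each $M_t$. Differentiating along the normal geodesic and using the Riccati equation gives $\rho_1'=\rho_2+\mathrm{tr}\,K_\nu$; since rank one symmetric spaces are Einstein, $\mathrm{tr}\,K_\nu$ is constant, so $\rho_2$ is constant over $M_t$ (the $2$-isoparametric step). The key observation is that, because $K_\nu=\lambda\,\mathrm{Id}$ on $V_\lambda$, the decoupled block satisfies the \emph{closed} recursion $P_k'=k(P_{k+1}+\lambda P_{k-1})$, and likewise $Q_k'=k(Q_{k+1}+\lambda'Q_{k-1})$. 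Consequently, once I know that the single partial trace $P_1=\mathrm{tr}(S|_{V_\lambda})$ is constant over every $M_t$, it follows that $Q_1=\rho_1-P_1$ is too, and then induction on $k$ through these two recursions (each step being the $t$-derivative of a quantity already constant over $M_t$, plus constants) yields that all $P_k$ and all $Q_k$, hence all $\rho_k$, are constant over $M_t$. By \eqref{sigkrhok} this means $M$ is $n$-isoparametric, i.e.\ totally isoparametric. Note that when $K_\nu$ has a single eigenvalue (the real space form case) the splitting is trivial, $P_1=\rho_1$ is automatically constant, and this argument already recovers Cartan's rigidity \eqref{cartan result}.

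The main obstacle is therefore precisely the constancy of the partial mean curvature $P_1=\mathrm{tr}(S|_{V_\lambda})$ over each $M_t$. This does \emph{not} follow from the trace identities alone: tracking only the quantities $\mathrm{tr}(K_\nu^jS^k)$ and using the constancy of all $\mathrm{tr}\,K_\nu^j$ leaves the defect of $\mathrm{tr}(K_\nu S)=\lambda P_1+\lambda'Q_1$ undetermined, so genuine geometric input beyond the Einstein condition is needed. I would establish it by a Maeda-type argument: $V_\lambda$ is spanned by the $\mathfrak{J}\nu$-directions of the underlying complex, quaternionic, or octonionic structure $\mathfrak{J}$, and combining the Codazzi equation with $\nabla K_\nu=0$ shows that the eigenvalue of $S$ along $\mathfrak{J}\nu$, and hence $P_1$, is constant, exactly as the Hopf principal curvature of a Hopf hypersurface in a nonflat complex space form is constant. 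When $N$ is locally a complex space form this is the classical statement, and the same computation, resting on the parallelism of $K_\nu$ characteristic of rank one symmetric spaces, covers the remaining cases. It is worth remarking that the rationality of $\lambda/\lambda'$ in these spaces rules out a naive frequency-separation argument based on the explicit trigonometric or hyperbolic solutions of $\mu_i'=\mu_i^2+\lambda_i$, which is why the Codazzi route is the one I would pursue.
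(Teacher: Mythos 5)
Your part (i) and your common setup agree with the paper's own proof: curvature-adaptedness propagates along the parallel family, the Riccati equation decouples in a parallel frame into the scalar equations $\mu_i'(t)=\mu_i(t)^2+\kappa_i$ exactly as in \eqref{Riccati eq for curv-adapted}, and ODE uniqueness plus your continuity argument on the connected $M$ settle (i). Your closed recursions $P_k'=k(P_{k+1}+\lambda P_{k-1})$ and $Q_k'=k(Q_{k+1}+\lambda' Q_{k-1})$ are precisely the paper's \eqref{inductive Phi Psi} for $\Phi_i,\Psi_i$, and your diagnosis is also correct that $1$-isoparametricity together with trace identities pins down $\mathrm{tr}\,S(t)$ and $\mathrm{tr}\,S(t)^2$ on each $M_t$ but leaves the partial trace over the small Jacobi eigenblock (your $P_1$, the paper's $\Psi_1$) undetermined.

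The gap is in part (ii): your whole induction rests on the unproven claim that $P_1=\mathrm{tr}(S|_{V_\lambda})$ is constant on each $M_t$, which you only sketch ``by a Maeda-type argument.'' In a nonflat complex space form this is indeed the Maeda/Ki--Suh theorem that the Hopf principal curvature of a Hopf hypersurface is constant, so there your induction closes. But the assertion that ``the same computation covers the remaining cases'' is not substantiated: in the quaternionic case $V_\lambda$ is three-dimensional and the local structures $J_1,J_2,J_3$ are parallel only as a bundle (they rotate among themselves under $\nabla$), so the Hopf-curvature computation does not transcribe verbatim; in the octonionic case there is no almost complex structure at all --- $V_\lambda$ is defined only through the Jacobi operator --- and no analogue of the constancy result is available (the paper itself remarks that curvature-adapted hypersurfaces in octonionic space forms are not classified). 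As written, your argument therefore proves the theorem only when $N$ is locally a real or complex space form; even quoting the classification-level results of Berndt and Murphy would cover only the quaternionic cases and still leave $\mathbb{O}P^2$ and $\mathbb{O}H^2$ open. The paper circumvents exactly this obstacle: it never proves $\Psi_1$ constant in advance, but differentiates $Q_1$ repeatedly through \eqref{inductive Phi Psi} and \eqref{inductive Qi rk one curv-adapted} to obtain \eqref{Q_1 k derivative}, so that at each fixed $t$ the vector $(\mu_1(t),\dots,\mu_n(t))$ satisfies the $n$ polynomial equations \eqref{Pk}, whose leading forms are the power sums $\rho_1,\dots,\rho_n$ and whose coefficients are constant on $M_t$; Lemma \ref{zero locus} (power sums perturbed by arbitrary lower-degree terms form a regular sequence in $\mathbb{C}[x_1,\dots,x_n]$, so the common zero locus is finite) together with connectedness of $M_t$ then forces all principal curvatures to be constant simultaneously --- the constancy of your $P_1$ drops out a posteriori rather than serving as an input. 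To rescue your route you would have to actually carry out the Codazzi computation for the quaternionic and octonionic blocks, which is a substantial open step, not a routine transcription.
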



\begin{rem}
The proof of Theorem$1.4$(i) yields also that a tube (tubular hypersurface) $M^n$ around a
curvature-adapted submanifold of constant principal curvatures in a
locally rank one symmetric space $N^{n+1}$ is a curvature-adapted
hypersurface of constant principal curvatures and thus totally
isoparametric.
\end{rem}


\begin{rem}\label{Osserman}
It is clear to see that(cf. \cite{Gr04}), given a curvature-adapted
hypersurface $M$ in a locally symmetric space $N$, each nearby
parallel hypersurface $M_{t}$ is automatically curvature-adapted.
Theorem $1.4$ holds also for a hypersurface $M$ in an Osserman
manifold whose nearby parallel hypersurfaces $M_{t}$ are all
curvature-adapted. In fact, the proof of Theorem $1.4$ depends
mainly on the constancy of eigenvalues of the Jacobi operator, while
an Osserman manifold is exactly a Riemannian manifold $N$ whose
Jacobi operator has constant eigenvalues including multiplicities,
independent of the choice of the unit tangent vector and the point
on $N$. Essentially, Osserman conjectured that an Osserman manifold
( named later ) is a locally rank one symmetric space. This
conjecture has been verified to be true except for the case when
dim$N$=16 (cf. \cite{Chi1}, \cite{N1}, \cite{BGN}).
\end{rem}

In a locally rank one symmetric space $N^{n+1}$ with non-constant
sectional curvatures, all known examples of totally isoparametric
hypersurfaces are homogeneous. Recall that for a hypersurface in $\mathbb{C}P^n$,
totally isoparametric is equivalent to homogeneous by the equivalence sequence (\ref{equiv-CPC-homog}).
In all probability, this equivalence still holds, at least, in each compact case ($\mathbb{C}P^n$,
$\mathbb{H}P^n$, $\mathbb{O}P^2$). On the other hand,
curvature-adapted hypersufaces in complex space forms
($\mathbb{C}P^n$, $\mathbb{C}H^n$, $\mathbb{C}^n$) are just Hopf
hypersurfaces. Similar to Kimura's work in $\mathbb{C}P^n$ (cf.
\cite{Kim}), Berndt \cite{Bern89} proved that a Hopf hypersurface
with constant principal curvatures in $\mathbb{C}H^n$ is necessarily
homogeneous. Based on this remarkable result, Theorem
\ref{compatible rk one sp} (ii) yields


\begin{cor}
A $1$-isoparametric Hopf hypersurface in $\mathbb{C}H^n$ is
homogeneous.
\end{cor}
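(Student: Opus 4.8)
The plan is to derive this Corollary directly from the machinery already in place, combining Theorem \ref{compatible rk one sp}(ii) with Berndt's classification. First I would recall that $\mathbb{C}H^n$ is a locally rank one symmetric space (a complex space form of constant holomorphic sectional curvature $-4$), so the hypotheses of Theorem \ref{compatible rk one sp} apply to any curvature-adapted hypersurface in it. The crucial translation step is the remark made just before Corollary \ref{Chern exam} in the paper, namely that for hypersurfaces in a complex space form the condition \emph{curvature-adapted} is equivalent to being a \emph{Hopf hypersurface}. Thus a $1$-isoparametric Hopf hypersurface $\widetilde{M}$ in $\mathbb{C}H^n$ is precisely a $1$-isoparametric curvature-adapted hypersurface, and Theorem \ref{compatible rk one sp}(ii) immediately yields that $\widetilde{M}$ is totally isoparametric; in particular, it has constant principal curvatures.

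With constancy of principal curvatures in hand, the second and final step is simply to invoke Berndt's theorem \cite{Bern89}, quoted in the paragraph preceding the statement: a Hopf hypersurface in $\mathbb{C}H^n$ with constant principal curvatures is necessarily homogeneous. Since $\widetilde{M}$ is Hopf by hypothesis and has been shown to have constant principal curvatures, Berndt's result applies verbatim, giving homogeneity and completing the proof.

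The two ingredients therefore slot together without any genuine computation: the work of showing that a curvature-adapted (equivalently Hopf) $1$-isoparametric hypersurface has constant principal curvatures is entirely absorbed into Theorem \ref{compatible rk one sp}(ii), whose proof (via the Riccati equation and the constancy of the eigenvalues of the normal Jacobi operator $K_{\nu}$, as outlined in Remark \ref{Jacobi remark}) is the real substance. The only point requiring care is the clean identification \emph{curvature-adapted in a complex space form} $\Leftrightarrow$ \emph{Hopf}, which rests on the fact that in $\mathbb{C}H^n$ the normal Jacobi operator $K_{\nu}$ acts on the tangent space with the distinguished direction $J\nu$ as an eigenvector, so that $K_{\nu}$ and the shape operator $S_{\nu}$ commute exactly when $J\nu$ is principal. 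I expect no essential obstacle here, since this equivalence is classical and is explicitly recorded in the paper; the corollary is thus a direct consequence obtained by chaining the already-established Theorem \ref{compatible rk one sp}(ii) with Berndt's homogeneity result.
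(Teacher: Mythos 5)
Your proof is correct and follows exactly the paper's own route: the identification of Hopf hypersurfaces in a non-flat complex space form with curvature-adapted hypersurfaces, then Theorem \ref{compatible rk one sp}(ii) to get constant principal curvatures, and finally Berndt's theorem \cite{Bern89} for homogeneity. This is precisely how the corollary is deduced in the paper, where it appears as an immediate consequence of the paragraph preceding its statement.
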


We conclude this section with some remarks. In virtue of the
classification of homogeneous hypersurfaces in $\mathbb{C}H^n$ by
\cite{BT}, $1$-isoparametric Hopf hypersurfaces in $\mathbb{C}H^n$
are consequently classified. An interesting phenomenon appeared in
$\mathbb{C}H^n$ that there exist many non-Hopf homogeneous
hypersurfaces, which is quite different from that in $\mathbb{C}P^n$
(cf. \cite{Bern10}). As is well known, a hypersurface in a non-flat
complex space form is curvature-adapted if and only if it is Hopf.
The concept of curvature-adapted hypersurface gives a natural
generalization of Hopf hypersurfaces in Hermitian manifolds to more
general Riemannian manifolds. Surprisingly, Berndt (\cite{Bern91})
proved that a hypersurface in quaternionic projective space
$\mathbb{H}P^n$ is curvature-adapted if and only if it is
homogeneous. While in quaternionic hyperbolic space $\mathbb{H}H^n$,
the classification of curvature-adapted hypersurfaces is still an
open problem (cf. \cite{Mur10}, \cite{Bern91}). Moreover, the
classification of curvature-adapted hypersufaces in octonionic space
forms is still elusive.

\section{Hidden Cartan-M\"{u}nzner equations}\label{section-CM}
This section will be devoted to the establishment of an inductive formula for those $(n-1)$
equations implied by Cartan-M\"{u}nzner equations
(\ref{eq1.3})-(\ref{eq1.4}) for isoparametric functions (polynomials)
on $S^{n+1}$. We first show the following
geometric characterization of a $k$-isoparametric hypersurface $M^n$
defined by a (local) $k$-isoparametric function $f$ on a Riemannian
manifold $N^{n+1}$:


\begin{lem}\label{geom. meaning for kisop}
A hypersurface is $k$-isoparametric if and only if each of its
nearby parallel hypersurfaces has constant $i$-th mean curvatures
for $i=1,\cdots,k$.
\end{lem}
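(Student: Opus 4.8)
The plan is to reduce the equivalence to one algebraic identity relating the operator $\triangle_i f = \sigma_i(H_f)$ to the elementary symmetric functions $H_i = \sigma_i(S)$ of the shape operator of the level sets, and then to run a triangular induction on $i$. Throughout I would work on a regular interval of values, so that the level sets $M_t = f^{-1}(t)$ are smooth parallel hypersurfaces and $b(f) = |\nabla f|^2 > 0$; this is all the (local) statement requires.

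The first step, and the crux, is to compute the Hessian $H_f$ along the parallel family. Put $\nu = \nabla f/\sqrt{b(f)}$ for the unit normal. Differentiating the transnormal equation $|\nabla f|^2 = b(f)$ gives $H_f(\nabla f,\cdot) = \tfrac12 b'(f)\,\nabla f$, so $\nu$ is an eigenvector of $H_f$ with eigenvalue $\tfrac12 b'(f)$; in particular $H_f(\nu,X)=0$ for every $X$ tangent to $M_t$, which is exactly the decoupling of the normal direction that makes the argument work. For such $X$, since $X(f)=0$ one computes $\nabla_X\nabla f = \sqrt{b(f)}\,\nabla_X\nu$, whence $H_f(X,Y) = \sqrt{b(f)}\,\langle SX,Y\rangle$, where $S$ is the Weingarten map of $M_t$ (the sign convention for $S$ is immaterial to the conclusion). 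Thus, in an orthonormal frame adapted to $M_t$, the operator $H_f$ is block diagonal, with eigenvalues $\tfrac12 b'(f)$ (normal) together with $\sqrt{b(f)}\,\mu_1,\ldots,\sqrt{b(f)}\,\mu_n$ (tangential), the $\mu_i$ being the principal curvatures of $M_t$.

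The second step is to expand $\sigma_i$ over this splitting. Writing $c=\sqrt{b(f)}$ and using $\sigma_i\big(\{\lambda\}\cup\{c\mu\}\big)=\sigma_i(c\mu)+\lambda\,\sigma_{i-1}(c\mu)$ together with $\sigma_j(c\mu)=c^j H_j$, I obtain
$$\triangle_i f = b(f)^{i/2}\,H_i + \tfrac12\, b'(f)\, b(f)^{(i-1)/2}\,H_{i-1}, \qquad i=1,\ldots,k,$$
with the convention $H_0\equiv 1$. On a fixed level set $b(f)$ and $b'(f)$ are constants, so this is a triangular linear relation, with nonzero leading coefficient $b^{i/2}$, between the lists $(\triangle_1 f,\ldots,\triangle_k f)$ and $(H_1,\ldots,H_k)$.

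The final step is the induction. If $f$ is $k$-isoparametric then each $\triangle_i f=a_i(f)$ is constant on $M_t$; since $H_0\equiv 1$, the identity for $i=1$ forces $H_1$ constant on $M_t$, and inductively each $H_i$ ($i\le k$) is constant on $M_t$. Conversely, if every $H_i$ ($i\le k$) is constant on each $M_t$, then the right-hand side above depends only on $t=f$, so $\triangle_i f=a_i(f)$ where $a_i(t)$ is the displayed expression; this $a_i$ is continuous because the $H_i(t)$ vary continuously with the parallel parameter. The form in terms of $\sigma_i$ is already exactly the $i$-th mean curvatures, so Newton's identities are not needed, though \eqref{sigkrhok} shows one may equivalently phrase constancy via the power sums. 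The only points demanding care are the smoothness of the adapted frame and the decoupling in the first step, and the positivity $b(f)>0$ that makes the triangular substitution reversible; both are automatic on a regular interval, which is the setting of the local statement.
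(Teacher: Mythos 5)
Your argument is correct and is essentially the paper's own proof: both diagonalize $H_f$ in a frame adapted to the level sets (normal eigenvalue $b'(f)/2$ from differentiating $|\nabla f|^2=b(f)$, tangential eigenvalues $\pm\sqrt{b(f)}\,\mu_i$ via the shape operator), yielding the triangular relation $\triangle_i f=(\pm\sqrt{b})^{\,i}H_i+(\pm\sqrt{b})^{\,i-1}\tfrac{b'}{2}H_{i-1}$, which the paper inverts by an explicit closed formula and you invert by induction --- the same content. The one step you leave implicit is that in the converse direction the function $f$ must first be constructed (the parallel family alone is given); the paper does this by setting $f:=t$ on the parallel hypersurface at signed distance $t$, so that $|\nabla f|^2\equiv 1$ and the relation degenerates to $\triangle_i f=\pm H_i(t)$, which is exactly the reading your final paragraph tacitly uses.
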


\begin{proof}
Let $M^n:=f^{-1}(t_0)$ be a $k$-isoparametric hypersurface in a
Riemannian manifold $N^{n+1}$, where $t_{0}$ is a regular value of the (local)
$k$-isoparametric function $f$ satisfying equations (\ref{iso1}) and
(\ref{k-isop-a}). For $\varepsilon>0$ sufficiently small, $t\in
(t_0-\varepsilon, t_0+\varepsilon)$, $M_t:=f^{-1}(t)$ is still a
hypersurface that is parallel to $M$ by equation (\ref{iso1}), since
now $\nabla f/|\nabla f|$ is the tangent vector field along the
normal geodesic of $M$ at each point. It is well known that the
shape operator, say $S(t)$, of $M_t$ with respect to the unit normal
vector field $\nu=\nabla f/|\nabla f|$ is characterized by (cf.
\cite{CR85}):
\begin{equation}\label{shape-hessian}
\langle S(t)X,~Y\rangle=\frac{-H_f(X,Y)}{|\nabla f|},
\end{equation}
where $X,Y$ are tangent vectors to $M_t$ and $H_f$ the Hessian of
$f$. Now let $\{e_1,\cdots,e_n\}$ be a local orthonormal basis of
$M_t$ and $e_i$ the eigenvector of $S(t)$ with respect to
principal curvature $\mu_i$ for $i=1,\cdots,n$. Then it follows from equation
(\ref{iso1}) that $H_f(e_i, \nu)=0$ and
$H_f(\nu,\nu)=b'(f)/2$. Thus under the orthonormal frame
$\{e_1,\cdots,e_n,\nu\}$ of $N^{n+1}$, the Hessian $H_f$ is
expressed as the diagonal matrix
\begin{equation}\label{Hf S}
H_f=diag\Big(-\sqrt{b(f)}\mu_1,\cdots,-\sqrt{b(f)}\mu_n,
b'(f)/2\Big).
\end{equation}
Therefore, a straightforward calculation using (\ref{Hf S}) shows
that $\triangle_jf:=\sigma_j(H_f)$ can be expressed in terms of the
mean curvatures $H_i:=H_i(t)=\sigma_i(S(t))$ for $i\leq j$ as:
\begin{equation}\label{lapk Hk}
\triangle_jf=\Big(-\sqrt{b(f)}\Big)^jH_j+\Big(-\sqrt{b(f)}\Big)^{j-1}\frac{b'(f)}{2}H_{j-1},
\end{equation}
and conversely,
\begin{equation}\label{Hk lapk}
H_j=\frac{1}{(2\sqrt{b(f)})^j}\Big(\sum_{i=1}^{j}(-1)^i2^i(b'(f))^{j-i}\triangle_if+(b'(f))^j\Big).
\end{equation}
Preparing these equations, we are now in a position to complete the
proof of the lemma. First, for a given $k$-isoparametric function
$f$, the equations (\ref{k-isop-a}) and (\ref{Hk lapk}) yield that
$H_1,\cdots,H_k$ are functions of $f$, thus constant on each $M_t$.
Consequently, the nearby parallel hypersurfaces of a
$k$-isoparametric hypersurface have constant mean curvatures
$H_1,\cdots,H_k$. Conversely, if each nearby parallel hypersurface
$M'_t:=exp_M(t\nu)$ of $M^n$ (the image of $M^n$ under the normal
exponential map at distance $t\in(-\varepsilon, \varepsilon)$) has
constant mean curvatures $H_1,\cdots,H_k$ (continuously depend on
$t$), we can define a function $f$ on the local neighborhood
$\bigcup_{t\in(-\varepsilon, \varepsilon)}M'_t\subset N^{n+1}$ of
$M^n$ in $N^{n+1}$ by $f|_{M'_t}:= t$ for $t\in(-\varepsilon,
\varepsilon)$. Clearly $|\nabla f|^2=1$ and by (\ref{lapk Hk}),
$\triangle_1f,\cdots,\triangle_kf$ are constant on each $M'_t$ and
continuously depend on $t=f$. Namely, $f$ is a local
$k$-isoparametric function on $N^{n+1}$ and thus $M^n$ is
$k$-isoparametric.
\end{proof}


In particular, an $n$-isoparametric hypersurface $M^n$ has constant mean
curvatures $H_1,\cdots,H_n$, thus constant principal curvatures. This justifies
the notion of \emph{totally isoparametric}. As mentioned in the introduction, by Cartan's
rigidity result (\ref{cartan result}), we know that for an
isoparametric function $f=F|_{S^{n+1}}$ satisfying (\ref{CM-isop-f}) on $S^{n+1}$, the restriction of a
Cartan polynomial $F$ satisfying (\ref{eq1.3}) and (\ref{eq1.4}), the mean
curvatures $H_1,\cdots,H_n$, or equivalently
$Q_1:=\rho_1(S(t)),\cdots,Q_n:=\rho_n(S(t))$, where $S(t)$ is the
shape operator of the level hypersurface $M_t:=f^{-1}(t)$, are
constant on $M_t$ and continuously (smoothly, in fact) depend on
$f=t\in(-1,1)$. This argument together with (\ref{lapk Hk}) allows us to construct $(n-1)$ smooth functions $a_2,\cdots,a_n\in
C^{\infty}(\mathbb{R})$ other than the functions $b$ and $a$ in $(\ref{CM-isop-f})$ by
\begin{equation}\label{ai for isop in Sn}
\triangle_2f=a_2(f),\cdots,\triangle_nf=a_n(f);
\end{equation}
or equivalently, $(n-1)$ smooth functions
$p_2,\cdots,p_n\in C^{\infty}(\mathbb{R})$ $(p_1=a_1=a)$ by
\begin{equation}\label{pi for isop in Sn}
\rho_2(H_f)=p_2(f),\cdots,\rho_n(H_f)=p_n(f).
\end{equation}
Correspondingly, we find out the $(n-1)$ \emph{hidden
Cartan-M\"{u}nzner equations} for polynomial $F$ involving
$\triangle_iF:=\sigma_i(H_F)$, or equivalently, involving
$\rho_i(H_F)$. So once we have formulae for one of the sets
$\{H_i\}$, $\{Q_i\}$, $\{a_i\}$, $\{p_i\}$,
$\{\sigma_i(H_F)|_{S^{n+1}}=:\bar{\sigma}_i\}$, and
$\{\rho_i(H_F)|_{S^{n+1}}=:\bar{\rho}_i\}$, the others can be
obtained by Newton's identities (\ref{Newton identity}), the
equalities (\ref{lapk Hk}), (\ref{Hk lapk}), and the relation
between the Hessian $H_f$ of $f$ on $S^{n+1}$ and the Hessian $H_F$
of $F$ on $\mathbb{R}^{n+2}$. In this way, the M\"{u}nzner's
geometric construction of $f$ and the formulae for principal
curvatures of $M_t$ lead us to an inductive formula for the set
$\{Q_i\}$, and then for the set $\{\bar{\rho}_i\}$ as follows
($Q_i$, $\bar{\rho}_i$ are regarded as functions of $t=f\in(-1,1)$):


\begin{thm}\label{implicit CM eqs}
In the same notations as above, for $k=1,\cdots,n-1$, the following
equalities are valid.
\begin{eqnarray}
&&Q_{k+1}=\frac{g}{k}\sqrt{1-t^2}\frac{dQ_k}{dt}-Q_{k-1},\nonumber\\
&&Q_0=n,\;Q_1=\frac{m_1g}{2}\sqrt{\frac{1+t}{1-t}}-\frac{m_2g}{2}\sqrt{\frac{1-t}{1+t}}.\nonumber\\
&&\bar{\rho}_{k+1}=\begin{cases}-\frac{g^2}{k}(1-t^2)\frac{d\bar{\rho}_{k}}{dt}
-g(g-2)t\bar{\rho}_{k}+g^2(g-1)\bar{\rho}_{k-1}\nonumber\\
+2g^{k+1}(g-1)^k(g-2),\qquad\qquad\quad for~~k~odd;\\
-\frac{g^2}{k}(1-t^2)\frac{d\bar{\rho}_{k}}{dt}
-g(g-2)t\bar{\rho}_{k}+g^2(g-1)\bar{\rho}_{k-1}\nonumber\\
+2g^{k+1}(g-1)^k(g-2)t,\qquad\quad\quad\ for~~k~even,
\end{cases}\nonumber\\
&&\bar{\rho}_{0}=n+2,\quad
\bar{\rho}_{1}=\frac{g^2}{2}(m_2-m_1).\nonumber
\end{eqnarray}
\end{thm}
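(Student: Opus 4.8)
The plan is to invoke M\"{u}nzner's explicit description of the level hypersurface $M_t\subset S^{n+1}$ and then read off both recursions from elementary trigonometric identities. First I would fix the standard normalization in which $M_t$ has $g$ distinct principal curvatures $\cot\theta_j$ with $\theta_j=\theta+\frac{(j-1)\pi}{g}$, $j=1,\dots,g$, multiplicities alternating between $m_1$ and $m_2$, and $t=f=\cos(g\theta)$ for $\theta\in(0,\pi/g)$. This yields $\sqrt{1-t^2}=\sin(g\theta)$ and the change of variable $g\sqrt{1-t^2}\,\frac{d}{dt}=-\frac{d}{d\theta}$, which converts all the $\theta$-derivatives below into the $t$-derivatives appearing in the claim.

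For the sequence $\{Q_k\}$ I would write $Q_k=\sum_j n_j\cot^k\theta_j$ with multiplicity $n_j\in\{m_1,m_2\}$. Differentiating in $\theta$ and using $\frac{d}{d\theta}\cot^k\theta_j=-k\cot^{k-1}\theta_j\,\csc^2\theta_j=-k(\cot^{k-1}\theta_j+\cot^{k+1}\theta_j)$ (i.e.\ $\csc^2=1+\cot^2$) gives $\frac{dQ_k}{d\theta}=-k(Q_{k-1}+Q_{k+1})$, which is exactly the stated recursion after the change of variable. The datum $Q_0=n$ is the total multiplicity, while $Q_1$ follows from the classical identities $\sum_j\cot\theta_j=g\cot(g\theta)$ and $\sum_j(-1)^{j-1}\cot\theta_j=g\csc(g\theta)$ (the latter for even $g$; for odd $g$ one has $m_1=m_2$ and the alternating term drops out), after writing $n_j=\frac{m_1+m_2}{2}\pm\frac{m_1-m_2}{2}$ and substituting $\cot(g\theta)=t/\sqrt{1-t^2}$, $\csc(g\theta)=1/\sqrt{1-t^2}$.

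The heart of the argument is the passage to $\{\bar{\rho}_k\}$, for which I would first compute the eigenvalues of the Euclidean Hessian $H_F$ along $M_t$. Euler's relations for the degree-$g$ homogeneous $F$ give $\langle\nabla F,x\rangle=gf$ and $H_F x=(g-1)\nabla F$, while the Gauss formula for $S^{n+1}\subset\mathbb{R}^{n+2}$ (using $\nabla^{\mathbb{R}}_X x=X$) yields $H_f=(H_F)^{TT}-gf\,\Id$ on the tangent space, where $(H_F)^{TT}$ is the tangential block. Combined with the diagonal form (\ref{Hf S}) of $H_f$, this shows that in the orthonormal frame $\{e_1,\dots,e_n,\nu,x\}$ of $\mathbb{R}^{n+2}$ (with $\nu$ the within-sphere normal to $M_t$) the operator $H_F$ decouples: the $e_i$ give eigenvalues $a_i:=gt-g\sqrt{1-t^2}\,\mu_i$, while the $\{\nu,x\}$-block is trace-free with determinant $-g^2(g-1)^2$, contributing the two extra eigenvalues $\pm g(g-1)$. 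Hence $\bar{\rho}_k=\sum_{i=1}^n a_i^k+(g(g-1))^k+(-g(g-1))^k=:P_k+c_k$, with $c_k=2(g(g-1))^k$ for $k$ even and $c_k=0$ for $k$ odd; this gives at once $\bar{\rho}_0=n+2$ and $\bar{\rho}_1=\triangle F|_{S^{n+1}}=\frac{g^2}{2}(m_2-m_1)$ by (\ref{eq1.4}).

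Finally I would derive a recursion for $P_k$ by differentiating $a_i=gt-g\sqrt{1-t^2}\,\mu_i$ and using $\frac{d\mu_i}{dt}=\frac{1+\mu_i^2}{g\sqrt{1-t^2}}$ (again $\csc^2=1+\cot^2$). Writing $\frac{dP_k}{dt}$ as a combination of $P_{k-1},P_k,P_{k+1}$ and solving for $P_{k+1}$ produces exactly $P_{k+1}=-\frac{g^2}{k}(1-t^2)\frac{dP_k}{dt}-g(g-2)tP_k+g^2(g-1)P_{k-1}$, the same differential operator as in the claim. Substituting $\bar{\rho}_k=P_k+c_k$ (with $c_k$ constant in $t$, so $\frac{d\bar{\rho}_k}{dt}=\frac{dP_k}{dt}$) then adds the correction $c_{k+1}+g(g-2)t\,c_k-g^2(g-1)c_{k-1}$, which evaluates to $2g^{k+1}(g-1)^k(g-2)$ for $k$ odd and $2g^{k+1}(g-1)^k(g-2)t$ for $k$ even, matching the stated formula. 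I expect the main obstacle to lie in this last passage: correctly extracting the $\pm g(g-1)$ eigenvalues from $H_F$ via the Euler relations in Step three, and then tracking the parity-dependent constants $c_k$ through the differential recursion, along with fixing the sign conventions in the normalization so that $Q_1$ and $\bar{\rho}_1$ emerge with the correct signs.
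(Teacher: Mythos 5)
Your proposal is correct and follows essentially the same route as the paper: M\"unzner's formula for the principal curvatures with the substitution $t=\cos(g\tau)$ gives the $Q_k$-recursion, and the relation $H_F=H_f+gf\langle\cdot,\cdot\rangle$ on tangent vectors together with the Euler identities yields the eigenvalues $gt-g\sqrt{1-t^2}\,\mu_i$ and $\pm g(g-1)$ of $H_F$, after which differentiation in $t$ produces the $\bar{\rho}_k$-recursion. Your per-eigenvalue identity $g^2(1-t^2)\,a_i'=-a_i^2-g(g-2)t\,a_i+g^2(g-1)$ and the explicit bookkeeping of the constants $c_k=(g(g-1))^k(1+(-1)^k)$ merely make explicit the differentiation the paper performs directly on its expression for $\bar{\rho}_k$, and both the recursion and the parity-dependent constant terms check out.
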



\begin{rem}
Since $F$ is a homogeneous polynomial of degree $g$ on
$\mathbb{R}^{n+2}$, we could homogenize the expressions of
$\bar{\rho}_{k}$ so as to extend the Cartan-M\"{u}nzner
equations (\ref{eq1.3}), (\ref{eq1.4}) on $\mathbb{R}^{n+2}$. For
instance, by the inductive formula, we list $(n\geq4)$:
\begin{eqnarray}
\rho_2(H_F)&=&-\frac{g^3}{2}(g-2)(m_2-m_1)F|x|^{g-4}+g^2(g-1)(n+2g-2)|x|^{2g-4},\nonumber\\
\rho_3(H_F)&=&\frac{g^4}{4}(g-2)(g-4)(m_2-m_1)F^2|x|^{g-6}-ng^3(g-1)(g-2)F|x|^{2g-6}\nonumber\\
&&+\frac{g^4}{4}(g^2-2)(m_2-m_1)|x|^{3g-6},\nonumber\\
\rho_4(H_F)&=&-\frac{g^5}{12}(g-2)(g-4)(g-6)(m_2-m_1)F^3|x|^{g-8}\nonumber\\
&&+\frac{2n}{3}g^4(g-1)(g-2)(g-3)F^2|x|^{2g-8}\nonumber\\
&&-\frac{g^5}{12}(g-2)(5g^2-2g-12)(m_2-m_1)F|x|^{3g-8}\nonumber\\
&&+\Big(\frac{n}{3}g^4(g-1)(g^2+g-3)+2g^4(g-1)^4\Big)|x|^{4g-8}.\nonumber
\end{eqnarray}
\end{rem}
\begin{proof}
According to M\"{u}nzner \cite{Mu80}, the level
hypersurface $M_t^n:=f^{-1}(t)$ $(t\in(-1,1))$ of $f$ has $g$
distinct principal curvatures
$\{\lambda_i=\cot(\tau+\frac{(i-1)\pi}{g})|i=1,\cdots,g\}$ with
multiplicities $m_i$ satisfying $m_i=m_{i+2}$ (subscripts $mod$ g),
where $f=t=\cos(g\tau)$ on $M_t$ and $\tau\in(0,\frac{\pi}{g})$ is
in fact the oriented (with respect to the unit normal vector field
$\nu:=\nabla f/|\nabla f|$) distance from $M_t$ to the focal
submanifold $M_{+}:=f^{-1}(1)$ (cf. \cite{CR85}). As a consequent result we have
\begin{equation}\label{Qk}
Q_k=m_1\sum_{i=1}^{[\frac{g+1}{2}]}\Big(\cot(\tau+\frac{2(i-1)\pi}{g})\Big)^k+
m_2\sum_{i=1}^{[\frac{g}{2}]}\Big(\cot(\tau+\frac{(2i-1)\pi}{g})\Big)^k.
\end{equation}
Since it is difficult to give a general formula for high order
power sum of the cotangent functions, we turn to give an inductive
formula instead of a general formula for $Q_k$. Observe that
\begin{eqnarray*}
&&\frac{d}{d\tau}(\cot(\tau+\theta))^k=-k\Big((\cot(\tau+\theta))^{k-1}+(\cot(\tau+\theta))^{k+1}\Big),\nonumber\\
&&\frac{dt}{d\tau}=-g\sin(g\tau)=-g\sqrt{1-t^2},\nonumber
\end{eqnarray*} and thus
\begin{equation*}
(\cot(\tau+\theta))^{k-1}+(\cot(\tau+\theta))^{k+1}=\frac{g}{k}\sqrt{1-t^2}\frac{d}{dt}(\cot(\tau+\theta))^k
\end{equation*}
which implies immediately the first inductive formula of the theorem by
taking sum in (\ref{Qk}).

To distinguish the notations, we denote by $\nabla$ and $D$ the
Levi-Civita connections on $S^{n+1}$ and $\mathbb{R}^{n+2}$,
respectively. By definition, we have for $X,Y\in\mathcal
{T}S^{n+1}$,
\begin{eqnarray}\label{Hf HF}
&&H_F(X,Y)=X(YF)-(D_XY)F=X(YF)-(\nabla_XY)F+\langle X,Y\rangle \frac{\partial F}{\partial r}\\
&&\quad\ \ \quad\qquad=H_f(X,Y)+\langle X,Y\rangle gf\nonumber,
\end{eqnarray}
where $\frac{\partial F}{\partial r}$ is the partial derivative of
$F$ with respect to the radial direction.

Let $\{e_1,\cdots,e_n\}$ be the principal orthonormal frame of $M_t$
as in the proof of Lemma \ref{geom. meaning for kisop} and be
arranged such that under this frame the shape operator
$$S(t)=diag(\mu_1,\cdots,\mu_n)=diag(\lambda_1I_{m_1},\cdots,\lambda_gI_{m_g}).$$
Then $\{e_1,\cdots,e_n, \nu_x:=\frac{\nabla f(x)}{|\nabla f(x)|}\}$
is an orthonormal frame of $\mathcal {T}_xS^{n+1}$ and under this
frame the Hessian $H_f$ can be expressed as in formula (\ref{Hf S}),
while $\{e_1,\cdots,e_n, \nu_x, x\}$ is an orthonormal frame of
$\mathbb{R}^{n+2}$ at $x\in S^{n+1}$. It is easily seen that
\begin{equation*}
H_F(e_i,x)=0,\quad H_F(\nu_x,x)=(g-1)|\nabla f|,\quad H_F(x,x)=g(g-1)f,
\end{equation*}
and thus by (\ref{Hf S}), (\ref{Hf HF}), the Hessian $H_F$ at $x\in
S^{n+1}$ can be expressed as
\begin{equation}\label{HF}
H_F=\left(\begin{smallmatrix} -\sqrt{b(f)}\mu_1+gf&&&&\\
&\ddots&&&\\&&-\sqrt{b(f)}\mu_n+gf&&\\
&&&\frac{b'(f)}{2}+gf&(g-1)\sqrt{b(f)}\\
&&&(g-1)\sqrt{b(f)}&g(g-1)f
\end{smallmatrix}\right)
\end{equation}
which has eigenvalues
$\{-g\sqrt{1-f^2}\mu_1+gf,\cdots,-g\sqrt{1-f^2}\mu_n+gf,g(g-1),-g(g-1)\}$
by using formula (\ref{CM-isop-f}). Consequently, we have on $M_t=f^{-1}(t)$,
\begin{eqnarray}\label{phokbar}
&&\bar{\rho}_{k}=m_1\sum_{i=1}^{[\frac{g+1}{2}]}\Big(-g\sqrt{1-t^2}\cot(\tau+\frac{2(i-1)\pi}{g})+gt\Big)^k\\
&&\ \quad\ +m_2\sum_{i=1}^{[\frac{g}{2}]}\Big(-g\sqrt{1-t^2}\cot(\tau+\frac{(2i-1)\pi}{g})+gt\Big)^k\nonumber\\
&&\ \quad\ +g^k(g-1)^k(1+(-1)^k).\nonumber
\end{eqnarray}
At last, by taking derivative of $\bar{\rho}_{k}$ with respect to
$t$ in (\ref{phokbar}) and using the relation $t=\cos(g\tau)$, we
arrive at the second inductive formula of the theorem.
\end{proof}


\textbf{\emph{Example.}} As is well known, Cartan's polynomial
$F:\mathbb{R}^{3m+2}\longrightarrow \mathbb{R}$ for isoparametric
hypersurfaces in spheres with $g=3$ distinct principal curvatures
can be written as
\begin{equation*}
F(x)=u^{3}-3uv^{2}+\frac{3}{2}u(X\overline{X}+Y\overline{Y}-2Z\overline{Z})+\frac{3\sqrt{3}}{2}v(X\overline{X}-Y\overline{Y})+\frac{3\sqrt{3}}{2}
(XYZ+\overline{XYZ}).
\end{equation*}
In this formula, $x=(u,v,X,Y,Z)\in\mathbb{R}^{3m+2}$, $u$ and $v$ are real parameters, while $X$, $Y$, $Z$ are coordinates in the algebra $F=\mathbb{R},\mathbb{C},\mathbb{H}$(Quaternions) ~or~ $\mathbb{O}$(Cayley numbers), for the case
$m_1=m_2=m=1,2,4, ~or~ 8$, respectively. For example, we compute the Hessian of
$F$ in the case of $m=1$:
\begin{equation*}
H_F = 3
\begin{pmatrix}
2u & -2v & X & Y & -2Z\\
-2v& -2u & \sqrt{3}X & -\sqrt{3}Y & 0\\
X  & \sqrt{3}X & u+\sqrt{3}v & \sqrt{3}Z & \sqrt{3}Y \\
Y  & -\sqrt{3}Y & \sqrt{3}Z  & u-\sqrt{3}v & \sqrt{3}X \\
-2Z & 0 & \sqrt{3}Y & \sqrt{3}X & -2u
\end{pmatrix}.
\end{equation*}
Then direct calculations lead to
\begin{eqnarray*}
&&\triangle_{1}(F)=0,\quad \triangle_{2}(F)=-63|x|^2,\quad\triangle_{3}(F)=-54F,\\
&&\triangle_{4}(F)=3^5\cdot 4|x|^4,\quad\triangle_{5}(F)=2^3\cdot 3^5|x|^2F.\nonumber
\end{eqnarray*}

\section{Isoparametric hypersurfaces in complex projective spaces}
In this section we begin by establishing an equivalence condition
for an isoparametric hypersuface $\widetilde{M}^{2n-1}$ in
$\mathbb{C}P^n$ to have constant $3$rd mean curvature $H_3$, that is
the constancy of an $S^1$-invariant function, say $\alpha$, on
$M^{2n}:=\pi^{-1}(\widetilde{M}^{2n-1})\subset S^{2n+1}$ ($\pi$ is
the Hopf fibration). As a consequence, $\widetilde{M}^{2n-1}$ is
$3$-isoparametric if and only if $\alpha$ is constant on each nearby
parallel hypersurface $M_t$ of $M^{2n}$. Next, we turn to prove
Theorem \ref{existence in CPn} and Theorem \ref{H3classification in
Cpn}. In particular, we construct explicitly some $S^1$-invariant
OT-FKM-type isoparametric polynomials on $\mathbb{R}^{4n+4}$,
calculating the function $\alpha$ which turns out non-constant on
some level hypersurface. In this way, we get finally the examples in
(ii) of Theorem \ref{existence in CPn}, as desired.

Let $\widetilde{M}^{2n-1}$ be a hypersurface in $\mathbb{C}P^n$ with
unit normal vector field $\tilde{\nu}$. Observe that the unit normal vector
field $\nu$ of $M^{2n}:=\pi^{-1}(\widetilde{M}^{2n-1})\subset
S^{2n+1}$ ($\pi$ is the Hopf fibration) is just the horizontal lift
of $\tilde{\nu}$, \emph{i.e.}, $\pi_{*}\nu=\tilde{\nu}$. For simplicity, we will use
the same symbols for Levi-Civita connections, shape
operators, Hessians, \emph{etc}, on spheres and Euclidean spaces as
last section and only add a tilde to the corresponding symbols on
$\mathbb{C}P^n$. Let $\tilde{J}$ be the complex structure on
$\mathbb{C}P^n$ induced from the canonical complex structure $J$ on
$\mathbb{R}^{2n+2}$ by the Hopf fibration, so that $Jx$ is the
tangent vector field of the $S^1$-fibre through $x\in S^{2n+1}$. It
follows that $J\nu$ is the horizontal lift of $\tilde{J}\tilde{\nu}$,
thus a global unit tangent vector field of $M^{2n}$ perpendicular with
$Jx$. These arguments allow us to choose a local orthonormal basis
$\{\tilde{e}_1,\cdots,\tilde{e}_{2n-2},\tilde{J}\tilde{\nu}\}$ on
$\widetilde{M}^{2n-1}$, so that the shape operator
$\widetilde{S}_{\tilde{\nu}}$ of $\widetilde{M}^{2n-1}$ is expressed
by a symmetric matrix $\widetilde{S}$. Following \cite{Wa82}, the shape
operator $S_{\nu}$ of $M^{2n}$ under the orthonormal
basis $\{e_1,\cdots,e_{2n-2},J\nu,Jx\}$ $(\pi_{*}e_i=\tilde{e}_i)$
can be expressed by a symmetric matrix $S$:
\begin{equation}\label{shapeS and Stilde in CPn}
S=\left(\begin{array}{ccc|c} &&&0\\
&\widetilde{S}& & \vdots \\
&&&-1\\
 \hline 0&\cdots&-1 & 0
\end{array}\right)
\end{equation}
To see the relation of $S$ and $\widetilde{S}$ above, we remark that the Hopf fibration is a
Riemannian submersion with totally geodesic $S^1$-fibres, and it follows that
$[Jx, \nu]=0$, and
$\langle
\widetilde{S}_{\tilde{\nu}}(\widetilde{X}),\widetilde{Y}\rangle=\langle
S_{\nu}(X),Y\rangle$ for $\widetilde{X}, \widetilde{Y}\in\mathcal
{T}\widetilde{M}^{2n-1}$ and their
horizontal lift $X,Y\in\mathcal {T}M^{2n}$. Hence
\begin{equation}\label{SJx}
-S_{\nu}(Jx)=\nabla_{Jx}\nu=\nabla_{\nu}Jx=JD_{\nu}x=J\nu.
\end{equation}
 Now
define the $S^1$-invariant function $\alpha$ on $M^{2n}$ ( it will
play an important role in this section ) by
\begin{equation}\label{alpha function}
\alpha:=\langle S_{\nu}(J\nu),J\nu\rangle=\langle
\nu,\nabla_{J\nu}J\nu\rangle=\langle\widetilde{S}_{\tilde{\nu}}(\tilde{J}\tilde{\nu}),\tilde{J}\tilde{\nu}\rangle\circ
\pi.
\end{equation}
Therefore, using (\ref{shapeS and Stilde in CPn}), we derive that
\begin{eqnarray}\label{sigma123ofS Stilde}
&&\sigma_1(S_{\nu})=\sigma_1(\widetilde{S}_{\tilde{\nu}})\circ\pi,\ \sigma_2(S_{\nu})=\sigma_2(\widetilde{S}_{\tilde{\nu}})\circ\pi-1,\\
&&\sigma_3(S_{\nu})=\sigma_3(\widetilde{S}_{\tilde{\nu}})\circ\pi-(\sigma_1(\widetilde{S}_{\tilde{\nu}})\circ\pi-\alpha).\nonumber
\end{eqnarray}
Note that the inverseimages under the Hopf fibration of
parallel hypersurfaces in $\mathbb{C}P^n$ are still parallel hypersurfaces in
$S^{2n+1}$. Now by (\ref{sigma123ofS Stilde}) we obtain the following.


\begin{prop}(cf. \cite{Wa82})\label{lift of isop to Sn}
A hypersurface $\widetilde{M}^{2n-1}$ in $\mathbb{C}P^n$ is
isoparametric if and only if its inverse image
$M^{2n}:=\pi^{-1}(\widetilde{M}^{2n-1})$ under the Hopf fibration
$\pi$ is an isoparametric hypersurface in $S^{2n+1}$.\hfill $\Box$
\end{prop}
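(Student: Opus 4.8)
The plan is to reduce the statement to the geometric characterization of isoparametric (that is, $1$-isoparametric) hypersurfaces furnished by Lemma \ref{geom. meaning for kisop}, and then to transport the constancy of the first mean curvature across the Hopf fibration by means of the shape-operator relation (\ref{shapeS and Stilde in CPn}). Concretely, applying Lemma \ref{geom. meaning for kisop} with $k=1$, the hypersurface $\widetilde{M}^{2n-1}$ is isoparametric if and only if each nearby parallel hypersurface $\widetilde{M}_t$ has constant first mean curvature $\sigma_1(\widetilde{S}_{\tilde{\nu}_t})$, and likewise $M^{2n}$ is isoparametric in $S^{2n+1}$ if and only if each nearby parallel hypersurface $M_t$ has constant $\sigma_1(S_{\nu_t})$. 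It therefore suffices to match the two families of parallel hypersurfaces under $\pi$ and to compare their first mean curvatures level by level.

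First I would record that $\pi$ intertwines the two parallel families, namely $M_t=\pi^{-1}(\widetilde{M}_t)$ for the corresponding value of the signed distance $t$, which is exactly the remark noted just above the proposition. The point I would emphasize is that along this family the unit normal $\nu_t$ of $M_t$ remains the horizontal lift of the unit normal $\tilde{\nu}_t$ of $\widetilde{M}_t$: since $\pi$ is a Riemannian submersion with totally geodesic $S^1$-fibres, a horizontal geodesic of $S^{2n+1}$ projects to a geodesic of $\mathbb{C}P^n$, so the normal exponential maps satisfy $\pi(\exp_x(t\nu))=\exp_{\pi(x)}(t\tilde{\nu})$ and hence $\pi_*\nu_t=\tilde{\nu}_t$. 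Consequently the entire derivation leading to (\ref{shapeS and Stilde in CPn}) applies verbatim to every member of the family, yielding $\sigma_1(S_{\nu_t})=\sigma_1(\widetilde{S}_{\tilde{\nu}_t})\circ\pi$ on each $M_t$, which is just the first equality of (\ref{sigma123ofS Stilde}) at level $t$.

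With this relation in hand the equivalence is immediate. Since $\pi\colon M_t\to\widetilde{M}_t$ is a surjective submersion, a function on $\widetilde{M}_t$ is constant if and only if its pullback to $M_t$ is constant. Applying this to $\sigma_1(\widetilde{S}_{\tilde{\nu}_t})$ shows that $\widetilde{M}_t$ has constant first mean curvature precisely when $M_t$ does, and this holds for every $t$ in the relevant interval; combining with Lemma \ref{geom. meaning for kisop} gives both implications of the proposition at once.

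I expect the only genuinely delicate point to be the preparatory step, i.e.\ the compatibility of the two parallel families together with the preservation of the horizontal-lift structure of the normal field under $\pi$. One must verify that horizontality of $\nu_t$ is maintained as $t$ varies, not merely at $t=0$, and that for small $t$ no focal degeneration spoils the identification $\pi^{-1}(\widetilde{M}_t)=M_t$; both are consequences of the totally geodesic fibre structure, but they constitute the real content of the argument. Once they are secured, the trace computation behind (\ref{shapeS and Stilde in CPn}) and the pullback observation are entirely routine.
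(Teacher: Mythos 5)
Your proposal is correct and follows essentially the same route the paper takes: the paper derives the proposition directly from the observation that $\pi$ carries the parallel family of $\widetilde{M}$ to the parallel family of $M$ together with the trace identity $\sigma_1(S_{\nu})=\sigma_1(\widetilde{S}_{\tilde{\nu}})\circ\pi$ from (\ref{shapeS and Stilde in CPn})--(\ref{sigma123ofS Stilde}), combined with Lemma \ref{geom. meaning for kisop}. Your additional care about horizontality of $\nu_t$ persisting along the family is a worthwhile elaboration of what the paper leaves implicit, but it is not a different method.
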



\begin{cor}\label{3isop-alpha}
Let $\widetilde{M}^{2n-1}$ be a $1$-isoparametric hypersurface in
$\mathbb{C}P^n$. Then
\begin{itemize}
 \item[a)] It must be $2$-isoparametric;
 \item[b)] It has constant $3$rd mean curvature $H_3$ if and only if the function $\alpha$ defined by (\ref{alpha function}) is
constant on $M^{2n}:=\pi^{-1}(\widetilde{M})$;
 \item[c)] It is $3$-isoparametric if and only
 if the function $\alpha$ is constant on each (nearby) parallel hypersurface
 $M_t$ of $M^{2n}:=\pi^{-1}(\widetilde{M})$.
\end{itemize}
\end{cor}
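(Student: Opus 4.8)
The plan is to reduce all three statements to the three identities in (\ref{sigma123ofS Stilde}), combined with the single structural fact that the Hopf lift $M^{2n}=\pi^{-1}(\widetilde M)$ of an isoparametric hypersurface is isoparametric in $S^{2n+1}$ (Proposition \ref{lift of isop to Sn}) and therefore, by Cartan's rigidity (\ref{cartan result}), has constant principal curvatures; consequently every $\sigma_i(S_\nu)$ is constant on $M$. For part a) I would note that a $1$-isoparametric hypersurface is isoparametric (Remark \ref{relation-hyp-func}), so $\sigma_1(S_\nu)$ and $\sigma_2(S_\nu)$ are constant, and the first two identities in (\ref{sigma123ofS Stilde}) then give $H_1=\sigma_1(\widetilde S_{\tilde\nu})=\sigma_1(S_\nu)$ and $H_2=\sigma_2(\widetilde S_{\tilde\nu})=\sigma_2(S_\nu)+1$ constant. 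Running this on every nearby parallel hypersurface and invoking the geometric characterization of Lemma \ref{geom. meaning for kisop} yields $2$-isoparametricity.

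For part b) I would rewrite the third identity in (\ref{sigma123ofS Stilde}) in the form $\sigma_3(\widetilde S_{\tilde\nu})\circ\pi=\sigma_3(S_\nu)+\sigma_1(\widetilde S_{\tilde\nu})\circ\pi-\alpha$. Here $\sigma_3(S_\nu)$ is constant (constant principal curvatures of $M$) and $\sigma_1(\widetilde S_{\tilde\nu})$ is constant (the $1$-isoparametric hypothesis). Since $\pi$ is a surjective submersion and $\alpha$ is $S^1$-invariant, I can then conclude that $H_3=\sigma_3(\widetilde S_{\tilde\nu})$ is constant on $\widetilde M$ if and only if $\alpha$ is constant on $M$, which is exactly the asserted equivalence.

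Part c) will be obtained by applying part b) along the whole parallel family. By Lemma \ref{geom. meaning for kisop}, $\widetilde M$ is $3$-isoparametric precisely when each nearby parallel hypersurface $\widetilde M_t$ has constant $H_1,H_2,H_3$. Each $\widetilde M_t$ is a level set of the same isoparametric function, hence itself $1$-isoparametric, so $H_1,H_2$ are automatically constant by part a), and its lift $M_t=\pi^{-1}(\widetilde M_t)$ is a parallel, hence isoparametric, hypersurface of $S^{2n+1}$. Applying part b) to $\widetilde M_t$ shows $H_3(\widetilde M_t)$ is constant if and only if $\alpha|_{M_t}$ is constant, giving the equivalence.

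The only genuinely delicate point is to make this family version rigorous: I must verify that $\alpha$ extends to a smooth function on the tubular neighborhood whose restriction to each $M_t$ is precisely the quantity $\langle S(t)_\nu(J\nu),J\nu\rangle$ entering (\ref{sigma123ofS Stilde}) for $\widetilde M_t$, with the adapted frame $\{e_1,\dots,e_{2n-2},J\nu,Jx\}$ carried consistently along the normal geodesics so that the block form (\ref{shapeS and Stilde in CPn}) persists for every $t$. Once this is checked --- using that $[Jx,\nu]=0$ and that $J\nu$, $Jx$ remain respectively tangent and vertical along the normal flow --- the remaining steps are pure linear algebra via (\ref{sigma123ofS Stilde}), and the $S^1$-invariance of $\alpha$ ensures that ``constant on $M_t$'' and ``constant as a function on $\widetilde M_t$'' coincide.
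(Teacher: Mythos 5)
Your proposal is correct and takes essentially the same route as the paper, whose proof is precisely the one-line combination of Proposition \ref{lift of isop to Sn}, Cartan's rigidity (\ref{cartan result}), the identities (\ref{sigma123ofS Stilde}), and Lemma \ref{geom. meaning for kisop} that you spell out in detail. The ``delicate point'' you flag at the end is not actually needed: the identities (\ref{sigma123ofS Stilde}) are pointwise algebraic facts valid for any hypersurface in $\mathbb{C}P^n$, so one simply applies them to each parallel $\widetilde{M}_t$ separately (and the paper later realizes $\alpha$ as a single function on $S^{2n+1}\setminus\{M_{\pm}\}$ via the explicit formula (\ref{formula for alpha})).
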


\begin{proof}
It follows immediately from Proposition \ref{lift of isop to Sn},
Cartan's rigidity result (\ref{cartan result}), identities in
(\ref{sigma123ofS Stilde}), as well as Lemma \ref{geom. meaning for
kisop}.
\end{proof}

Suppose we are now given an isoparametric hypersurface $\widetilde{M}$ in $\mathbb{C}P^n$. Let $F:\mathbb{R}^{2n+2}\rightarrow \mathbb{R}$ be the isoparametric polynomial (satisfying Cartan-M\"{u}nzner equations
(\ref{eq1.3})-(\ref{eq1.4})) corresponding to the isoparametric
hypersurface $M=\pi^{-1}(\widetilde{M})\subset S^{2n+1}$, the
inverse image of $\widetilde{M}$, and $f=F|_{S^{2n+1}}$. Denote by the same symbol $J$ the matrix
representation of the corresponding complex structure $J$ in terms of the
Euclidean coordinates\footnote{Throughout this paper, by using the congruence
$\mathbb{R}^{N}\cong\mathcal{T}_x\mathbb{R}^{N}$, we identify $\partial x_k=\frac{\partial}{\partial x_k}=Dx_k$ with the k-th coordinate vector field $(0,\cdots,1,\cdots,0)$ for $1\leq k\leq N$, and $Dx^t=(\partial x_1,\cdots,\partial x_{2n+2})$ with the identity matrix $I$. The superscript $t$ means transposition, vectors are written in columns as points and also regarded as $(N\times1)$-matrices. The derivative $D$ gives a column vector when it acts on a function as gradient, and thus gives a matrix when it acts on a row vector of functions. A dot $``\cdot"$ between matrices means standard matrix product, and $\langle A, B\rangle:=tr(A^t\cdot B)$ denotes the inner product of $A,B$ in the $(m\times n)$ matrix space $M(m,n)$.}
$x=(x_1,\cdots,x_{2n+2})^t=\sum\limits_{k=1}^{2n+2}x_k\partial
x_k=Dx^t\cdot x$,
namely,
$$J(Dx^t):=J(\partial x_1,\cdots,\partial x_{2n+2})=(\partial
x_1,\cdots,\partial x_{2n+2})\cdot J=Dx^t\cdot J,$$
hence $J(V)=J\cdot V$, for any vector $V$ on $\mathbb{R}^{2n+2}$.

In these notations, we are ready to
give an explicit formula for the function $\alpha$ defined by
(\ref{alpha function}) on $M$ in terms of $F$ and $J$. Indeed,
$\alpha$ can be regarded as a function on $S^{2n+1}\backslash
\{M_{\pm}\}$ as follows.


\begin{prop}The function $\alpha$ on each parallel hypersurface $M_t:=f^{-1}(t)=F^{-1}(t)\cap
S^{2n+1}$ of $M$ can be described as a sum
\begin{equation}\label{formula for alpha}
\alpha= \frac{1}{g^3(1-F^2)^{\frac{3}{2}}}
\Big\{g^3F(3-2F^2)+\Omega_F\Big\},
\end{equation}
where $\Omega_F:=DF^t\cdot J\cdot D^2F\cdot J\cdot DF|_{S^{2n+1}}$
and $D^2F=D(DF^t)$ is the matrix of the Hessian $H_F$.
\end{prop}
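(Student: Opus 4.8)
The plan is to push the definition $\alpha=\langle S_\nu(J\nu),J\nu\rangle=-\langle\nabla_{J\nu}\nu,J\nu\rangle$ (with $\nabla$ the Levi-Civita connection of $S^{2n+1}$) down to the ambient space $\mathbb{R}^{2n+2}$ and compute everything in terms of $DF$, $D^2F$ and $J$. First I would record the elementary ingredients on $S^{2n+1}$: Euler's formulas $x^t\cdot DF=gF$ and $D^2F\cdot x=(g-1)DF$ (from homogeneity of $F$); the identity $|DF|^2=g^2$ on the sphere (from the Cartan-M\"unzner equation (\ref{eq1.3})); and the tangential gradient of $f=F|_{S^{2n+1}}$, namely $w:=\nabla f=DF-gFx$, with $|w|=g\sqrt{1-F^2}$ by (\ref{CM-isop-f}), so that $\nu=w/|w|$ and $J\nu=Jw/|w|$.

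Since $M=\pi^{-1}(\widetilde{M})$ is a union of Hopf circles, $f$ is constant on the $S^1$-orbits, hence $F$ is $S^1$-invariant; differentiating the relation $F(e^{J\theta}x)\equiv F(x)$ once and then twice in $x$ yields the two structural identities
\[
DF^t\cdot Jx=0,\qquad D^2F\cdot Jx=J\cdot DF ,
\]
which are the crucial facts. Next, because $J\nu$ is tangent to $S^{2n+1}$ (indeed $\langle\nu,Jx\rangle=0$ by the first identity and $x^tJx=0$), the sphere derivative may be replaced by the flat one: $\nabla_{J\nu}\nu$ is the tangential part of $D_{J\nu}\nu$, and $\langle D_{J\nu}\nu,x\rangle=-\langle\nu,J\nu\rangle=0$, so $\langle\nabla_{J\nu}\nu,J\nu\rangle=\langle D_{J\nu}\nu,J\nu\rangle$. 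Writing $\nu=w/|w|$ and differentiating, the contribution coming from the derivative of $1/|w|$ is proportional to $\langle w,Jw\rangle=w^tJw=0$ (skew-symmetry of $J$) and therefore drops out, leaving
\[
\alpha=-\frac{1}{|w|^{3}}\,\langle D_{Jw}w,\,Jw\rangle .
\]

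It then remains to expand $D_{Jw}w$ with $w=DF-gFx$, using $D_V(DF)=D^2F\cdot V$ and $D_Vx=V$. Pairing with $Jw$ and noting that $(Jw)^t x=-w^tJx=0$ kills the mixed term, giving $\langle D_{Jw}w,Jw\rangle=(Jw)^t D^2F\,(Jw)-gF|w|^2$. Expanding $Jw=J\,DF-gF\,Jx$ and invoking the two $S^1$-identities together with the Euler relations, the quadratic form splits into three elementary pieces: $(J\,DF)^t D^2F\,(J\,DF)=-\Omega_F$ (using $J^t=-J$); the cross term equals $-2gF\,(J\,DF)^t(J\,DF)=-2g^3F$ (using $D^2F\cdot Jx=J\,DF$ and $|DF|^2=g^2$); and the last term equals $g^2F^2\,(Jx)^t(J\,DF)=g^3F^3$ (using $x^t\cdot DF=gF$). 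Adding $-gF|w|^2=-g^3F(1-F^2)$ and dividing by $-|w|^3=-g^3(1-F^2)^{3/2}$ collects everything into exactly (\ref{formula for alpha}).

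The only genuine subtlety — and the step I would single out as the heart of the argument — is the identity $D^2F\cdot Jx=J\cdot DF$; once this consequence of $S^1$-invariance is available, the quadratic form $(Jw)^tD^2F\,(Jw)$ collapses into $-\Omega_F$ plus two elementary scalar terms, and the remainder is bookkeeping. A secondary point to treat with care is the legitimacy of differentiating the ambient extension $w/|w|$ in place of $\nu$, which is justified precisely because $J\nu$ is tangent to $S^{2n+1}$, so that $w/|w|$ agrees with $\nu$ to first order along the relevant curve.
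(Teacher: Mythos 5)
Your proposal is correct and follows essentially the same route as the paper: both push the computation to the flat connection on $\mathbb{R}^{2n+2}$ via $\nu=(DF-gFx)/\sqrt{b}$, and both hinge on the same two $S^1$-invariance identities $DF^t\cdot Jx=0$ and $D^2F\cdot Jx=J\cdot DF$ together with the Euler relations and $|DF|^2=g^2$. The only (cosmetic) difference is that you compute $-\langle D_{J\nu}\nu,J\nu\rangle$ while the paper computes $\langle\nu,D_{J\nu}J\nu\rangle$, which agree by metric compatibility, and your three-piece expansion of $(Jw)^tD^2F(Jw)$ matches the paper's term-by-term bookkeeping.
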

\begin{proof}
At any point $p\in M_t\subset S^{2n+1}$, it is clear that the spherical gradient of $f$ is expressed by
\begin{equation*}
\nabla f(p)=DF-\langle DF, p\rangle p=DF-gfp,
\end{equation*}
and we can define the unit normal vector field of $M_t$ by
\begin{equation*}
\nu=\nabla f/|\nabla f|=\nabla f/\sqrt{b}.
\end{equation*} Thus $J(\nu)_p=\frac{1}{\sqrt{b}}J(DF-gfp)=\frac{1}{\sqrt{b}}J\cdot(DF-gfp)$,
 and by definition,
\begin{eqnarray}\label{alphaform}
&&\alpha|_p~=\langle\nu_p,\nabla_{J\nu_p}J\nu\rangle=\frac{1}{b\sqrt{b}}\langle DF-gfp,\nabla_{J(DF-gfp)}J(DF-gfx)\rangle\\
&&\qquad=\frac{1}{b\sqrt{b}}\langle DF-gfp, D_{J(DF-gfp)}J(DF-gFx)\rangle\nonumber\\
&&\qquad=\frac{1}{b\sqrt{b}}\langle DF-gfp, J\Big(D_{J(DF-gfp)}(DF-gFx)\Big)\rangle\nonumber\\
&&\qquad=\frac{1}{b\sqrt{b}}\langle DF-gfp, J\Big(D(DF-gFx)^t|_{x=p}\cdot J(DF-gfp)\Big)\rangle\nonumber\\
&&\qquad=\frac{1}{b\sqrt{b}}\langle DF-gfp, J\Big((D^2F-gDF\cdot x^t-gF Dx^t)|_{x=p}\cdot J(DF-gfp)\Big)\rangle\nonumber\\
&&\qquad=\frac{1}{b\sqrt{b}}\langle DF-gfp, J\cdot(D^2F-gDF\cdot p^t-gFI)\cdot J\cdot(DF-gfp)\rangle\nonumber\\
&&\qquad=\frac{1}{b\sqrt{b}}(DF^t-gfp^t)\cdot J\cdot(D^2F-gp\cdot DF^t-gFI)\cdot J\cdot(DF-gfp).\nonumber
\end{eqnarray}
where $x$ is the position vector field extending $p$. Note that
$M_t$ is $S^1$-invariant and thus $Jp\in\mathcal {T}_pM_t$, which
implies $DF^tJp=\langle\nabla f, Jp\rangle=0$ and thus $D^2F\cdot Jp
= JDF$. In addition, we have on hand several simple equalities:
\begin{eqnarray*}
&&J^2=-I,\quad p^tJp=0,\quad |\nabla f|^2=b=g^2(1-f^2),\nonumber\\
&&|DF|^2|_p=g^2, \quad p^t\cdot DF=gF.\nonumber
\end{eqnarray*} Applying these equalities, we conclude
\begin{eqnarray}
&&(DF^t-gFp^t)\cdot Jp\cdot DF^t= 0,\nonumber\\
&&(DF^t-gFp^t)\cdot J\cdot gFI\cdot J\cdot(DF-gFp)=-g^3f(1-f^2),\nonumber\\
&& (DF^t-gFp^t)\cdot J\cdot D^2F\cdot J\cdot (DF-gFp)=DF^t\cdot
J\cdot D^2F\cdot J\cdot DF + 2g^3F -g^3F^3.\nonumber
\end{eqnarray}
Substituting all these equalities in (\ref{alphaform}), we get
immediately the desired formula (\ref{formula for alpha}).
\end{proof}

Now we investigate the function $\alpha$ on the OT-FKM-type
isoparametric hypersurfaces in spheres which almost cover all
isoparametric hypersurfaces with four distinct principal curvatures
(cf. \cite{CCJ07}). For a symmetric Clifford system $\{A_0,\cdots,A_m\}$
on $\mathbb{R}^{2r}$, \emph{i.e.}, $A_i$'s are symmetric matrices
satisfying $A_iA_j+A_jA_i=2\delta_{ij}I_{2r}$, the OT-FKM-type
isoparametric polynomial $F$ on $\mathbb{R}^{2r}$ is then defined as
(cf.\cite{FKM}):
\begin{equation}\label{FKM isop. poly.}
F(z) = |z|^4 - 2\displaystyle\sum_{p = 0}^{m}{\langle
A_pz,z\rangle^2},
\end{equation}
where we take the coordinate system $z=(x^t,y^t)^t=(x_1,\cdots,x_r,y_1,\cdots,y_r)^t\in
\mathbb{R}^{2r}$.
By orthogonal transformations, we can write
\begin{eqnarray}\label{Clifford A_j}
&&A_0= \left(\begin{array}{c|c}
I & 0 \\
\hline 0 & -I
\end{array}\right),\quad
A_1= \left(\begin{array}{c|c}
0 & I \\
\hline I & 0
\end{array}\right),\\
&&A_j= \left(\begin{array}{c|c}
0 & -E_j \\
\hline E_j & 0
\end{array}\right),\quad j=2,\cdots,m,\nonumber
\end{eqnarray}
where $\{E_2,\cdots,E_m\}$ is a skew-symmetric Clifford system
on $\mathbb{R}^r$, \emph{i.e.}, $E_i$'s are skew-symmetric matrices
satisfying $E_iE_j+E_jE_i=-2\delta_{ij}I_r$. It can be verified that the level hypersurfaces
of this polynomial restricted to the unit sphere have $4$ distinct constant
principal curvatures with multiplicities $m_1=m$ and $m_2=r-m-1$, provided $r-m-1>0$.
Now fixing a complex structure $J$ on
$\mathbb{R}^{2r}$ under the coordinate system as $J=
\left(\begin{array}{c|c}
0 & -I \\
\hline I & 0
\end{array}\right)
$, we define the corresponding $S^1$-action on $\mathbb{R}^{2r}$ by
$e^{i\theta}\cdot z=(\cos\theta + \sqrt{-1}\sin\theta)z = \cos\theta
z+ \sin\theta Jz$. We prepare in advance the following equalities
which will be useful later.
\begin{eqnarray}\label{A_j identities}
&&A_0J = -JA_0 = -A_1,\quad A_1J = -JA_1 = A_0,\quad A_0A_1 = -A_1A_0 = -J,\\
&&A_jJ=JA_j, \quad for~~ j=2,\cdots,m.\nonumber
\end{eqnarray}


\begin{prop}\label{FKM Omega}
The OT-FKM-type isoparametric polynomial $F$ defined by (\ref{FKM
isop. poly.}) and (\ref{Clifford A_j}) is $S^1$-invariant under
the fixed complex structure $J$ and thus induces an isoparametric function $\tilde{f}$ on
$\mathbb{C}P^{r-1}$ through the Hopf fibration. Moreover, the
function $\Omega_F$ defined in (\ref{formula for alpha})
at the point $z\in S^{2r-1}$ can be expressed as a sum
\begin{eqnarray}\label{FKM Omega formula}
&&\frac{1}{64}\Omega_F=2F^2-F-2+8(1+F)\Big( \langle A_0z,z\rangle^2+\langle A_1z,z\rangle^2\Big)\\
&&\quad\qquad+16\sum_{q=2}^{m}\Big(\sum_{p=2}^m\langle A_pz,z\rangle\langle A_qz,JA_pz\rangle\Big)^2.\nonumber
\end{eqnarray}
\end{prop}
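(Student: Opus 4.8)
The plan is to prove the two assertions separately. For the $S^1$-invariance, I would substitute $e^{i\theta}z=\cos\theta\,z+\sin\theta\,Jz$ into $F$ and expand each $\langle A_p(e^{i\theta}z),e^{i\theta}z\rangle$ using the symmetry of $A_p$. By the identities (\ref{A_j identities}), for $p\ge 2$ one checks $\langle A_pz,Jz\rangle=0$ and $\langle A_pJz,Jz\rangle=\langle A_pz,z\rangle$, so each summand with $p\ge2$ is individually invariant; while $JA_0=A_1$, $JA_1=-A_0$ show that the pair $(\langle A_0z,z\rangle,\langle A_1z,z\rangle)$ is rotated by the angle $2\theta$, leaving $\langle A_0z,z\rangle^2+\langle A_1z,z\rangle^2$ unchanged. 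Since $|e^{i\theta}z|=|z|$, this gives $F(e^{i\theta}z)=F(z)$; the induced isoparametric function $\tilde f$ on $\mathbb{C}P^{r-1}$ then follows from Proposition \ref{lift of isop to Sn}.

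For the formula (\ref{FKM Omega formula}), write $\phi_p:=\langle A_pz,z\rangle$. On $S^{2r-1}$ the gradient and Hessian of the polynomial (\ref{FKM isop. poly.}) are
\[ DF = 4|z|^2 z - 8\sum_p \phi_p A_p z,\qquad D^2F = 4|z|^2 I + 8zz^t - 16\sum_p A_p z\,z^t A_p - 8\sum_p \phi_p A_p. \]
Putting $v:=DF$ and $w:=Jv$, the skew-symmetry $J^t=-J$ turns $\Omega_F=v^tJ\,D^2F\,Jv$ into $-w^tD^2F\,w$, so that
\[ \Omega_F = -4|w|^2 - 8(z^tw)^2 + 16\sum_p(z^tA_pw)^2 + 8\sum_p\phi_p\,w^tA_pw. \]

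The first three pieces are comparatively painless. From $J^tJ=I$ and the Cartan--M\"{u}nzner normalization (\ref{eq1.3}), $|DF|^2=g^2=16$ on the sphere, so $|w|^2=|v|^2=16$; and the relation $DF^tJz=0$ (already used in the proof of Proposition \ref{formula for alpha}) gives $z^tw=0$. For the third piece I would exploit the orthonormality $z^tA_pA_qz=\delta_{pq}$ coming from $A_pA_q+A_qA_p=2\delta_{pq}I$, together with $J=A_1A_0$, $JA_0=A_1$, $JA_1=-A_0$, to reduce
\[ z^tA_0w=4\phi_1,\quad z^tA_1w=-4\phi_0,\quad z^tA_pw=8\sum_{q\ge2}\phi_q\langle A_qz,JA_pz\rangle\ \ (p\ge2). \]
Squaring and summing then yields precisely $256(\phi_0^2+\phi_1^2)+1024\sum_{q\ge2}\left(\sum_{p\ge2}\phi_p\langle A_qz,JA_pz\rangle\right)^2$, the two index orderings agreeing after squaring because $\langle A_qz,JA_pz\rangle$ is antisymmetric in $p,q$; this already reproduces the last term of (\ref{FKM Omega formula}) up to the overall factor $\tfrac1{64}$.

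The remaining cubic term $8\sum_p\phi_p\,w^tA_pw$ is where the real work lies, and it is the step I expect to be the main obstacle. Writing $\zeta:=Jz$ and expanding $w=4\zeta+8\phi_1A_0z-8\phi_0A_1z-8\sum_{s\ge2}\phi_sA_s\zeta$, the form $w^tA_pw$ breaks into many inner products of the shape $\zeta^tA_p\zeta$ and $z^tA_iA_pA_jz$, each of which I would collapse to a Kronecker delta, a $\phi_p$, or one of the antisymmetric coefficients $\langle A_qz,JA_pz\rangle$ by repeated use of $A_iA_j=-A_jA_i$ $(i\ne j)$, $A_p^2=I$, and the $J$-identities; the bookkeeping over the three index ranges $p=0$, $p=1$, $p\ge2$ is lengthy. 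Finally, to recognize the closed form I would combine this cubic term with the contributions $-4|w|^2=-64$ and $256(\phi_0^2+\phi_1^2)$ found above and substitute the on-sphere identity $F=1-2\sum_p\phi_p^2$, converting the total polynomial in the $\phi_p$ into $64\bigl(2F^2-F-2+8(1+F)(\phi_0^2+\phi_1^2)\bigr)$. Tracking signs through this last repackaging is the most error-prone part, and it yields the claimed (\ref{FKM Omega formula}) after extracting the factor $\tfrac1{64}$.
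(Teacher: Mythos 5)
Your proposal is correct and takes essentially the same route as the paper: the paper likewise expands $DF$ and $\tfrac14 D^2F$ for the OT-FKM polynomial and evaluates $\Omega_F$ in four pieces corresponding to the four terms of the Hessian, your substitution $w=JDF$ (turning $\Omega_F$ into $-w^tD^2Fw$) being only a cosmetic streamlining of the paper's sandwiched-$J$ computation, with your constant, $(z^tA_pw)^2$, and cubic pieces matching the paper's parts (i), (iv), and (iii) respectively. The one step you defer does close as you claim: from $JA_0J=A_0$, $JA_1J=A_1$, $JA_pJ=-A_p$ ($p\geq2$) one gets $w^tA_pw=-v^tA_pv$ for $p=0,1$ and $w^tA_pw=v^tA_pv$ for $p\geq2$, while the Clifford relations give $v^tA_pv=16\phi_p(4\sigma-3)$ with $\sigma:=\sum_q\phi_q^2=(1-F)/2$, so the cubic term contributes exactly $2F^2-F-1+4(1+2F)\big(\phi_0^2+\phi_1^2\big)$ to $\tfrac{1}{64}\Omega_F$, yielding (\ref{FKM Omega formula}).
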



\begin{rem}
When $m=1$, $\Omega_F=64(-2F^2-F+2)$ and thus $\alpha$ is constant
on each level hypersurface of $f=F|_{S^{2r-1}}$. Consequently, it follows from
Proposition \ref{3isop-alpha} that the isoparametric
function $\tilde{f}$ on $\mathbb{C}P^{r-1}$ induced from $f$ is now
$3$-isoparametric.
\end{rem}

\begin{proof}
By a direct calculation using (\ref{A_j identities}), we have for $j\geq2$ that
\begin{equation*}
\langle A_jz,z\rangle=\langle A_jJz,Jz\rangle,\; \langle
A_jJz,z\rangle=0,
\end{equation*} which imply
\begin{equation*}
\langle A_je^{i\theta}z,e^{i\theta}z\rangle= \langle A_j(\cos\theta z +
\sin\theta Jz),\cos\theta z + \sin\theta Jz\rangle = \langle
A_jz,z\rangle.
\end{equation*}
Using
\begin{equation*}
\langle A_0e^{i\theta}z,e^{i\theta}z\rangle^2+\langle
A_1e^{i\theta}z,e^{i\theta}z\rangle^2=\langle A_0z,z\rangle ^2 +
\langle A_1z,z\rangle ^2,
\end{equation*} we verify the $S^1$-invariance of $F$,
\emph{i.e.}, $F(e^{i\theta}z)=F(z)$ for any $e^{i\theta}\in S^1$.

To compute $\Omega_F$, first we observe that
\begin{eqnarray}
&&\frac{1}{4}DF=|z|^2z-2\sum_{p = 0}^{m}{\langle A_pz,z\rangle
A_pz},\nonumber\\ && \frac{1}{4}D^2F=|z|^2I + 2zz^t - 2\sum_{p =
0}^{m}{\langle A_pz,z\rangle A_p} - 4\sum_{p =
0}^{m}{A_pzz^tA_p}.\nonumber
\end{eqnarray}
Then by definition,
\begin{eqnarray}
\frac{1}{64}\Omega_F&=& \Big(z^t-2\sum_{p = 0}^{m}{\langle
A_pz,z\rangle z^tA_p}\Big)\cdot J\cdot \Big(I + 2zz^t - 2\sum_{p =
0}^{m}{\langle A_pz,z\rangle A_p} - 4\sum_{p =
0}^{m}{A_pzz^tA_p}\Big)\nonumber\\&&\cdot J\cdot \Big(z-2\sum_{p =
0}^{m}{\langle A_pz,z\rangle A_pz}\Big),\nonumber
\end{eqnarray}
which will be calculated by $4$ parts as follows:\\

$(i)\quad\Big(z^t-2\displaystyle\sum_{p = 0}^{m}{\langle
A_pz,z\rangle z^tA_p}\Big)\cdot J\cdot I \cdot J\cdot
\Big(z-2\displaystyle\sum_{q = 0}^{m}{\langle A_qz,z\rangle
A_qz}\Big)$

\quad\quad $= \frac{1}{4}DF^t\cdot J\cdot I\cdot J\cdot
\frac{1}{4}DF $ $= -\frac{1}{16}|DF|^2 $ $= -1$; \vspace{2mm}

$(ii)\quad\Big(z^t-2\displaystyle\sum_{p = 0}^{m}{\langle
A_pz,z\rangle z^tA_p}\Big)\cdot J\cdot 2zz^t \cdot J\cdot
\Big(z-2\displaystyle\sum_{q = 0}^{m}{\langle A_qz,z\rangle
A_qz}\Big)$

\quad \quad$=8\displaystyle\sum_{p,q = 0}^{m}{\langle A_pz,z\rangle
\langle A_qz,z\rangle z^tA_pJ z\cdot
z^tJA_qz}$

\quad \quad$=-8\Big(\displaystyle\sum_{p= 0}^1{\langle A_pz,z\rangle
\langle JA_pz,z\rangle }\Big)^2=0$; \vspace{2mm}

$(iii)\quad \Big(z^t-2\displaystyle\sum_{p = 0}^{m}{\langle
A_pz,z\rangle z^tA_p}\Big)\cdot J\cdot \Big(-2\displaystyle\sum_{q =
0}^{m}{\langle A_qz,z\rangle A_q}\Big) \cdot J\cdot
\Big(z-2\displaystyle\sum_{j = 0}^{m}{\langle A_jz,z\rangle
A_jz}\Big)$

\quad \quad $=-2\displaystyle\sum_{q = 0}^{m}{\langle A_qz,z\rangle
}\Big(z^t-2\displaystyle\sum_{p = 0}^{m}{\langle A_pz,z\rangle
z^tA_p}\Big)\cdot JA_qJ\cdot \Big(z-2\displaystyle\sum_{j =
0}^{m}{\langle A_jz,z\rangle A_jz}\Big)$

\quad \quad $=-2\displaystyle\sum_{q = 0}^1{\langle A_qz,z\rangle
}\Big\{\Big(z^t-2\displaystyle\sum_{p = 0}^{m}{\langle A_pz,z\rangle
z^tA_p}\Big)\cdot A_q\cdot \Big(z-2\displaystyle\sum_{j =
0}^{m}{\langle A_jz,z\rangle A_jz}\Big)\Big\}$

\quad \quad\quad  $+ 2\displaystyle\sum_{q = 2}^{m}{\langle
A_qz,z\rangle }\Big\{\Big(z^t-2\displaystyle\sum_{p = 0}^{m}{\langle
A_pz,z\rangle z^tA_p}\Big)\cdot A_q\cdot
\Big(z-2\displaystyle\sum_{j = 0}^{m}{\langle A_jz,z\rangle
A_jz}\Big)\Big\}$

\quad \quad $=6\Big( \langle A_0z,z\rangle ^2+\langle A_1z,z\rangle
^2\Big) - 6\displaystyle\sum_{p = 2}^{m}{\langle A_pz,z\rangle ^2} $

\quad \quad $\quad -8\displaystyle\sum_{q = 0}^1{\langle
A_qz,z\rangle \Big(\langle A_qz,z\rangle ^3 +
\displaystyle\sum_{p\neq q}{\langle A_pz,z\rangle ^2\langle
A_qz,z\rangle }\Big)}$

\quad \quad\quad $+8\displaystyle\sum_{q = 2}^{m}{\langle
A_qz,z\rangle \Big(\langle A_qz,z\rangle ^3 +
\displaystyle\sum_{p\neq q}{\langle A_pz,z\rangle ^2\langle
A_qz,z\rangle }\Big)}$

\quad \quad $=-3(1-F) + 12\Big( \langle A_0z,z\rangle ^2+\langle
A_1z,z\rangle ^2\Big) +2(1-F)^2$

\quad \quad \quad$ - 8\Big(\langle A_0z,z\rangle ^2+\langle
A_1z,z\rangle ^2\Big)(1-F)$;

\vspace{2mm}

$(iv)\quad\Big(z^t-2\displaystyle\sum_{p = 0}^{m}{\langle
A_pz,z\rangle z^tA_p}\Big)\cdot J\cdot \Big(- 4\displaystyle\sum_{q
= 0}^{m}{A_qzz^tA_q}\Big) \cdot J\cdot \Big(z-2\displaystyle\sum_{j
= 0}^{m}{\langle A_jz,z\rangle A_jz}\Big)$

\quad\quad$=-4\displaystyle\sum_{q = 0}^1{z^tJA_qz\cdot z^tA_qJz} +
16\displaystyle\sum_{p,q}{\langle A_pz,z\rangle z^tA_pJA_qz\cdot
z^tA_qJz}$

\quad\quad$\quad -16\displaystyle\sum_{p,q,j}{\langle A_pz,z\rangle
\langle A_jz,z\rangle z^tA_pJA_qz\cdot z^tA_qJA_jz}$

\quad\quad$=4\Big( \langle A_0z,z\rangle ^2+\langle A_1z,z\rangle
^2\Big) + 16\displaystyle\sum_{p,q = 0}^1{\langle A_pz,z\rangle
\langle A_pz,JA_qz\rangle\langle A_qJz,z\rangle  }$

\quad\quad$\quad +
16\displaystyle\sum_{q=0}^{m}\Big(\sum_{p=0}^m\langle
A_pz,z\rangle\langle A_qz,JA_pz\rangle\Big)^2$

\quad\quad$=4\Big( \langle A_0z,z\rangle ^2+\langle A_1z,z\rangle
^2\Big) + 16\displaystyle\sum_{q=2}^{m}\Big(\sum_{p=2}^m\langle
A_pz,z\rangle\langle A_qz,JA_pz\rangle\Big)^2$.

Finally, taking sum of $(i)$, $(ii)$, $(iii)$, $(iv)$, we complete the proof
of the proposition.
\end{proof}


We are now ready to prove Theorem \ref{existence in CPn}.

\textbf{\emph{Proof of Theorem \ref{existence in CPn} (i).}} We
prefer to prove this assertion by making use of several known
results although there might be some direct approaches. The notations
remain the same as before.

First, a result of Park \cite{Pa} asserts that: If
$M^{2N}=\pi^{-1}(\widetilde{M}^{2N-1})$ is the inverse image of an
isoparametric hypersurface $\widetilde{M}^{2N-1}$ in
$\mathbb{C}P^N$, then the number $g$ of distinct principal
curvatures of the isoparametric hypersurface $M$ in $S^{2N+1}$ must be
$2$, $4$ or $6$; and if $g=6$, then the two multiplicities satisfy
$m_1=m_2=1$ and thus $N=3$ in this case. So when $N=2n$ is even, $g$
must be only $2$, or $4$. Hence it suffices to analyze these two cases
for our aim.

When $g=2$, Proposition $2.1$ in \cite{Xiao} stated that $\widetilde{M}$ has $2$ or
$3$ constant principal curvatures, thus is totally isoparametric
and homogeneous by equivalence sequence (\ref{equiv-CPC-homog}).

When $g=4$, firstly, according to Abresch \cite{Abr}, we can show
that either one of the multiplicities $\{m_1,m_2\}$ equals $1$, or $m_1=m_2=2$.
In fact, since $\emph{dim}_{R}\mathbb{C}P^{2n}=4n$, it follows that the corresponding
isoparametric hypersurface in the sphere is of $4n$ dimension, that is, $m_{1}+m_{2}=2n$.
Hence in the main theorem of \cite{Abr}, the case $4A$ is excluded; the case $4B_1$ occurs only when
$\min\{m_1,m_2\}=1$; and the case $4B_2$ occurs only when $m_1=m_2=2$, as we claimed.

In the first case, \emph{i.e.}, $\min\{m_1,m_2\}=1$, by virtue of
\cite{Tak2}, the isoparametric hypersurface $M^{4n}$ in the sphere
$S^{4n+1}$ must be homogeneous, and corresponds to the isotropy
representation of the rank two symmetric space
$W:=SO(2n+3)/S(O(2)\times O(2n+1))$, where $2n=m_{1}+m_{2}$. Let
$\mathfrak{o}(2n+3)=\mathfrak{k}\oplus\mathfrak{p}$ be the Cartan
decomposition, where $\mathfrak{k}$ is the Lie algebra of
$O(2)\times O(2n+1)$. Then the isoparametric hypersurface $M^{4n}$
is congruent to a principal orbit of the adjoint action of
$S(O(2)\times O(2n+1))$ on the vector space $\mathfrak{p}\cong
\mathbb{R}^{4n+2}$. In this representation, it is not difficult to
show that there is a unique complex structure (up to a sign, the
standard complex structure) on $\mathfrak{p}$ such that $M^{4n}$ is
$S^1$-invariant with respect to this complex structure, as
\cite{Xiao} claimed. This property helps us deduce that the number
$l$ of non-horizontal principal eigenspaces of $M^{4n}$ equals $2$
identically. Hence by the equivalence sequence
(\ref{equiv-CPC-homog}), $\widetilde{M}^{4n-1}$ is totally
isoparametric and homogeneous.

At last, we need to prove that the second case, \emph{i.e.},
$m_1=m_2=2$, is impossible. Without loss of generality, we can
assume that $M^{4n}$ is compact. Recall that a topological theorem
of M\"{u}nzner \cite{Mu80} determines the cohomology rings of a
compact isoparametric hypersurface in a sphere. By applying it, we
have $H^{q}(M^{4n},\mathbb{Z}_2)=0$ for any odd number $q$,
$H^{0}(M^{4n},\mathbb{Z}_2)=H^{4n}(M^{4n},\mathbb{Z}_2)=\mathbb{Z}_2$,
and $H^{2k}(M^{4n},\mathbb{Z}_2)=\mathbb{Z}_2\oplus\mathbb{Z}_2$,
for $k=1,\cdots,2n-1$. It follows from Poincar\'{e} duality that the
Euler characteristic $\chi(M)$ of $M^{4n}$ is equal to $2g=8>0$. On
the other hand, since $M^{4n}$ is the inverse image of a hypersurface
$\widetilde{M}^{4n-1}$ in $\mathbb{C}P^{2n}$, $Jx$ ($x$ is the
position vector of $M$) is a globally defined tangent vector field
without singularities on $M$ and thus by the Hopf index theorem, the
Euler characteristic $\chi(M)=0$, a contradiction which completes
the proof of Theorem \ref{existence in CPn}(i). \hfill $\Box$


\textbf{\emph{Proof of Theorem \ref{existence in CPn} (ii).}} By
Proposition \ref{lift of isop to Sn} and Corollary
\ref{3isop-alpha}, it suffices to construct a required
$S^1$-invariant isoparametric hypersurface $M$ (resp. isoparametric
polynomial $F$) in $S^{4n+3}$ (resp. on $\mathbb{R}^{4n+4}$) such
that the function $\alpha$ is non-constant on $M$. More
specifically, because of Proposition \ref{FKM Omega} we will look
for symmetric Clifford systems $\{A_0,A_1,\cdots,A_m\}$ on
$\mathbb{R}^{4n+4}$ in the form (\ref{Clifford A_j}), such that the
function $\Omega_F$ for the corresponding OT-FKM-type isoparametric
polynomial $F$ defined by (\ref{FKM isop. poly.}), could be not only
computed explicitly by formula (\ref{FKM Omega formula}), but also
non-constant on some level hypersurface $M$ of $F|_{S^{4n+3}}$.
Towards the aim, the first non-trivial case is when $m=2$, where we
successfully find an example for each $n\geq1$.

Note that when $m=2$, the formula (\ref{FKM Omega formula}) can be
deduced to
\begin{equation}\label{Omega m 2}
\Omega_F=64\Big(-2F^2-F+2-8(1+F)\langle A_2z,z\rangle^2\Big).
\end{equation}
Let $E_2= \left(\begin{array}{c|c}
0 & -I_{n+1} \\
\hline I_{n+1} & 0
\end{array}\right)$
be the sub-matrix of $A_2$ in (\ref{Clifford A_j}) with $2r=4n+4$.
Then it is easily verified that $\{A_0,A_1,A_2\}$ is now a symmetric
Clifford system .

Now for
$z=(x^t,y^t)^t=(x_1,\cdots,x_{2n+2},y_1,\cdots,y_{2n+2})^t\in
\mathbb{R}^{4n+4}$ ($n\geq1$), the OT-FKM-type isoparametric
polynomial $F$ can be written as
\begin{equation}\label{FKM example of N3isop with m 2}
F(z)=|z|^4-2\Big\{(|x|^2-|y|^2)^2+4\langle x,y\rangle^2+4\langle
E_2x,y\rangle^2\Big\}.
\end{equation}
 Let $M=F^{-1}(0)\bigcap
S^{4n+3}$, and $z=(x^t,y^t)^t$,
$\check{z}=(\check{x}^t,\check{y}^t)^t$ be two points in $M$ with
$x_1=\check{x}_1=\frac{1}{\sqrt{2}}$,
$y_1=y_2=\check{y}_{n+2}=\check{y}_{n+3}=\frac{1}{2}$ and the other
coordinates vanishing. Then it is easily calculated that
$\Omega_F(z)=128$ and $\Omega_F(\check{z})=-128$. Equivalently,
$\alpha$ is non-constant on $M$. Therefore, the isoparametric
hypersurface $\widetilde{M}=\pi(M)$ in $\mathbb{C}P^{2n+1}$ is not
$3$-isoparametric, as desired. \hfill $\Box$

It is worthy remarking that the isoparametric polynomial $F$ in (\ref{FKM example of N3isop with m 2}) also
induces a homogeneous hypersurface in $\mathbb{C}P^{2n+1}$ under
some other complex structure, by comparing Takagi's (\cite{Tak})
classification. Next, we calculate the function $\alpha$ (or
equivalently $\Omega_F$ defined in (\ref{formula for alpha}))
explicitly for the inhomogeneous example of Ozeki-Takeuchi
\cite{OT75} with $g=4$ and multiplicities $(m_1,m_2)=(3,4r)$ under
two different complex structures. Both functions turn out non-constant
on a level hypersurface in the sphere. As a result, these two induced
isoparametric hypersurfaces in $\mathbb{C}P^{4r+3}$ are not
$3$-isoparametric. From another point of view, by comparing Takagi's classification, we
know that these two induced isoparametric hypersurfaces in
$\mathbb{C}P^{4r+3}$ are not homogeneous, and hence by Theorem
\ref{H3classification in Cpn} they are not $3$-isoparametric.


\textbf{\emph{More Examples.}} First we decompose the quaternionic
space $\mathbb{H}^{2r+2}\cong \mathbb{R}^{8r+8}$ ($r\geq1$) as
$\mathbb{H}^{2r+2}=(\mathbb{H}\times
\mathbb{H}^{r})\times(\mathbb{H}\times \mathbb{H}^{r})$,
\emph{i.e.,} for
\begin{equation*}
z=(x_1,\cdots,x_{4r+4},y_1,\cdots,y_{4r+4})^t\in\mathbb{R}^{8r+8}\cong\mathbb{H}^{2r+2},
\end{equation*} we write $z=(u^{t},v^{t})^{t}$, where
$u=(u_0,\hat{u}) \in \mathbb{H}\times \mathbb{H}^{r}$,
$v=(v_0,\hat{v}) \in \mathbb{H}\times \mathbb{H}^{r}$,
$\hat{u}=(u_1,...,u_r)$, $\hat{v}=(v_1,...,v_r)$, and
\begin{eqnarray*}
&&u_i=x_{4i+1}+x_{4i+2}\mathbf{i}+x_{4i+3}\mathbf{j}+x_{4i+4}\mathbf{k}\in\mathbb{H}\cong\mathbb{R}^4,\nonumber\\
&&v_i=y_{4i+1}+y_{4i+2}\mathbf{i}+y_{4i+3}\mathbf{j}+y_{4i+4}\mathbf{k}\in\mathbb{H}\cong\mathbb{R}^4.\nonumber
\end{eqnarray*}
Then the isoparametric polynomial $F$ of the inhomogeneous example
of Ozeki-Takeuchi \cite{OT75} with $g=4$ and multiplicities
$(m_1,m_2)=(3,4r)$ is defined by
\begin{equation}\label{OT 34}
F(z)=|z|^4-2\Big\{4\Big(|u\cdot \bar{v}^t|^2-\langle u,v \rangle
^2\Big)+ \Big(|\hat{u}|^2 -|\hat{v}|^2 + 2\langle u_0, v_0 \rangle
\Big)^2\Big\},
\end{equation}
where the canonical involution
$\bar{v}_i=y_{4i+1}-y_{4i+2}\mathbf{i}-y_{4i+3}\mathbf{j}-y_{4i+4}\mathbf{k}$
and the quaternionic multiplication in $\mathbb{H}$ are used.

Let
\begin{equation*}
A_0=
\begin{pmatrix}
 0& & I_4&  \\
 & I_{4r}&  & 0\\
 I_{4}& & 0 &\\
  &0 &  & -I_{4r}
\end{pmatrix}
, A_p=  \left(\begin{smallmatrix}
 & & & D_p&  &  \\
 & & &  & \ddots & \\
 & & &  & & D_p\\
-D_p& & & & &  \\
  & \ddots & & & &\\
  & &-D_p& & &
\end{smallmatrix}\right)
,~~for ~~p=1,2,3,
\end{equation*} \vspace{2mm}
where~~ $D_1=  \left(\begin{smallmatrix}
0& -1 & & \\
1& 0 & &\\
 & & 0 & -1 \\
 & & 1& 0
\end{smallmatrix}\right)
$,~ $D_2=  \left(\begin{smallmatrix}
 & & -1& 0  \\
 & & 0 & 1  \\
 1& 0&  &  \\
 0&-1 & &
\end{smallmatrix}\right)
$,~and~~ $D_3=  \left(\begin{smallmatrix}
 & & 0& -1  \\
 & & -1 & 0  \\
 0& 1 &  &  \\
 1& 0 & &
\end{smallmatrix}\right)
$. Then a straightforward calculation shows that
$\{A_0,A_1,A_2,A_3\}$ is a symmetric Clifford system (though not in
the form (\ref{Clifford A_j})) and the polynomial $F$ defined by
(\ref{OT 34}) can also be expressed as the OT-FKM-type isoparametric
polynomial:
\begin{equation}\label{OT34FKM}
F=|z|^4-2\displaystyle\sum_{p = 0}^{3}{\langle A_pz,z \rangle ^2}.
\end{equation}

In the following, we will calculate the function $\alpha$ under two different complex structures:

(i) Let $J$ be the complex structure on
$\mathbb{R}^{8r+8}\cong\mathbb{H}^{2r+2}$ as the orthogonal
transformation induced by the right multiplication of $\mathbf{i}$
whose matrix representation is
\begin{equation}\label{cplx str r i}
 J= \left(
\begin{smallmatrix}
D_0 &  &  &\\
    & D_0 &  &\\
    &  & \ddots & \\
    &  &  & D_0
\end{smallmatrix} \right),\quad
\emph{where} \quad D_0= \left(\begin{smallmatrix}
0& -1 & & \\
1& 0 & &\\
 & & 0 & 1 \\
 & & -1& 0
\end{smallmatrix}\right).
\end{equation}
Evidently, $F$ is $S^1$-invariant under this complex
structure and $f=F|_{S^{8r+7}}$ thus induces an isoparametric
function $\tilde{f}$ on $\mathbb{C}P^{4r+3}$ through the
corresponding Hopf fibration.

By direct calculations , we have the following relations
\begin{eqnarray}
&&D_0D_p=D_pD_0,\quad JA_p = A_pJ, \quad for\quad p=0,1,2,3,\nonumber\\
&&D_1D_2=-D_2D_1=D_3,\quad D_2D_3=-D_3D_2=D_1,\quad
D_3D_1=-D_1D_3=D_2.\nonumber
\end{eqnarray}
Therefore,
$$(JA_p)^t = -JA_p,\quad (A_pJA_q)^t=A_pJA_q, \quad
for\quad p, q =0,1,2,3,~~ p\neq q,$$
which help us deduce the formula
for $\Omega_F$ (defined in (\ref{formula for alpha})) under $J$ in
(\ref{cplx str r i}) as
\begin{equation*}
\Omega_F=64\Big\{2F^2-F-2 +16\displaystyle\sum_{q =
0}^{3}{\Big(\displaystyle\sum_{p = 0}^{3}{\langle A_pz,z \rangle
\langle JA_qz,A_pz \rangle }\Big)^2}\Big\}.
\end{equation*}

(ii) Let $J'$ be another complex structure on
$\mathbb{R}^{8r+8}\cong\mathbb{H}^{2r+2}$ as the orthogonal
transformation induced by the left multiplication of $\mathbf{i}$
whose matrix representation is
\begin{equation}\label{cplx str l i}
J'= \left(
\begin{smallmatrix}
D_1 &  &  &\\
    & D_1 &  &\\
    &  & \ddots & \\
    &  &  & D_1
\end{smallmatrix} \right),\quad \emph{where}~ D_1~\emph{was given before}.
\end{equation}
Similarly, $F$ is also $S^1$-invariant under $J'$ and thus induces an
isoparametric function $\tilde{f}'$ on $\mathbb{C}P^{4r+3}$ through
the corresponding Hopf fibration.

Again, by direct calculations , we have the following relations
\begin{eqnarray}
&&J'A_0=A_0J',\quad J'A_1=A_1J'\nonumber\\
&&J'A_2=-A_2J'=A_3, \quad J'A_3=-A_3J'=-A_2,\nonumber
\end{eqnarray}
and hence $A_pJ'A_q$ is skew-symmetric for almost all $p\neq q$
except for $A_0J'A_1$, $A_1J'A_0$, $A_2J'A_3=-I$, $A_3J'A_2=I$ which
are symmetric. Then we can deduce the formula for $\Omega_F$
(defined in (\ref{formula for alpha})) under $J'$ in (\ref{cplx str
l i}) as
\begin{eqnarray*}
&&\frac{1}{64}\Omega_F=2F^2-F-2 + 8(1+F)\Big(\langle A_2z,z \rangle
^2+\langle A_3z,z \rangle ^2\Big)\\
&&\ \quad\qquad+16 \Big(\langle A_0z,z \rangle
^2+\langle A_1z,z \rangle ^2\Big)\langle A_0J'A_1z,z \rangle
^2.
\end{eqnarray*}

In conclusion, let $M=F^{-1}(0)\bigcap S^{8r+7}$, and
$z=(x^t,y^t)^t$, $\check{z}=(\check{x}^t,\check{y}^t)^t$ be two
points in $M$ with $x_1=\frac{1}{2}\sqrt{2+\sqrt{2}}$,
$y_1=\check{y}_5=\frac{1}{2}\sqrt{2-\sqrt{2}}$,
$\check{x}_1=\check{y}_1=\frac{1}{2\sqrt{2}}\sqrt{2+\sqrt{2}}$, and
the other coordinates vanishing. Then it is easily calculated that,
under both complex structures $J$, $J'$ defined in (i), (ii) above,
$\Omega_F(z)=128$ and $\Omega_F(\check{z})=-128$. Therefore,
$\alpha$ is non-constant on $M$ under both $J$ and $J'$. This means
that the isoparametric hypersurfaces $\widetilde{M}=\pi(M)$,
$\widetilde{M}'=\pi'(M)$ in $\mathbb{C}P^{4r+3}$ are not
$3$-isoparametric, where $\pi$ and $\pi'$ are the corresponding Hopf
fibrations ${S}^{8r+7}\longrightarrow {\mathbb{C}P^{4r+3}}$ induced
by $J$ and $J'$, respectively.\hfill $\Box$


To conclude this section, we come to prove Theorem
\ref{H3classification in Cpn}.

\textbf{\emph{Proof of Theorem \ref{H3classification in Cpn}.}} Suppose that
$\widetilde{M}^{2n-1}$ is an isoparametric hypersurface in
$\mathbb{C}P^n$ of constant $3$rd mean curvature $H_3$ with unit
normal vector field $\tilde{\nu}$. Then
$M^{2n}=\pi^{-1}(\widetilde{M}^{2n-1})$ is an isoparametric
hypersurface in $S^{2n+1}$. As we pointed out before, it has $g=2,4, or~6$ distinct constant
principal curvatures $\lambda_1>\cdots>\lambda_g$. Let
$T_{\lambda_i}$ be the principal distribution on $M$ corresponding
to $\lambda_i$ and thus $\mathcal
{T}M=T_{\lambda_1}\oplus\cdots\oplus T_{\lambda_g}$. By Corollary
\ref{3isop-alpha}, $\alpha:=\langle S_{\nu}J\nu,J\nu\rangle$ is now
constant on $M$. We will use the same notations as those at
the beginning of this section.

Let $x$ be the position vector field of $M$. Then $Jx$ is the
vertical vector field tangent to the $S^1$-fibres of the Hopf
fibration. Represent $Jx$ as
\begin{equation}\label{Jx decomposition}
Jx=\phi_1\epsilon_1+\cdots+\phi_g\epsilon_g,
\end{equation}
where $\epsilon_i\in T_{\lambda_i}$ is a unit vector and
$\phi_i\geq0$ is the length of the component of $Jx$ in
$T_{\lambda_i}$ for $i=1,\cdots,g$. It follows from (\ref{SJx}) and
(\ref{Jx decomposition}) that
\begin{equation}\label{SJv decom}
S_{\nu}Jx=\lambda_1\phi_1\epsilon_1+\cdots+\lambda_g\phi_g\epsilon_g=-J\nu,
\end{equation}
which together with the fact that $Jx,J\nu$ are orthogonal unit
vectors implies
\begin{equation}\label{l computation}
\phi_1^2+\cdots+\phi_g^2=1,\quad
\lambda_1\phi_1^2+\cdots+\lambda_g\phi_g^2=0,\quad
\lambda_1^2\phi_1^2+\cdots+\lambda_g^2\phi_g^2=1.
\end{equation}
Similarly, by (\ref{SJv decom}) we have
\begin{equation}\label{alpha decom}
\alpha=\langle
S_{\nu}J\nu,J\nu\rangle=\lambda_1^3\phi_1^2+\cdots+\lambda_g^3\phi_g^2.
\end{equation}
Note that the number $l$ of non-horizontal eigenspaces of $S_{\nu}$
equals the number of non-zero $\phi_i$'s. By the equivalence sequence (\ref{equiv-CPC-homog}), it suffices to prove
$l\equiv const$ case by case with respect to $g=2,4,or~6$.

(i) When $g=2$, it follows immediately from (\ref{l computation})
that
$$\phi_1=\sqrt{\frac{-\lambda_2}{\lambda_1-\lambda_2}}, \quad \phi_2=\sqrt{\frac{\lambda_1}{\lambda_1-\lambda_2}},\quad \lambda_1=-\frac{1}{\lambda_2}>0,$$
which imply $\alpha=\lambda_1+\lambda_2\equiv const$, $l\equiv2$ and
thus $\widetilde{M}$ is homogeneous by (\ref{equiv-CPC-homog})(see
also \cite{Xiao}).

(ii) When $g=4$, it follows immediately from (\ref{l computation})
and (\ref{alpha decom}) that
$$\begin{pmatrix}
 \phi_1^2\\
 \phi_2^2\\
 \phi_3^2\\
 \phi_4^2
\end{pmatrix}=\begin{pmatrix}1&1&1&1\\ \lambda_1&\lambda_2&\lambda_3&\lambda_4\\
\lambda_1^2&\lambda_2^2&\lambda_3^2&\lambda_4^2\\
\lambda_1^3&\lambda_2^3&\lambda_3^3&\lambda_4^3
\end{pmatrix}^{-1}\begin{pmatrix}
 1\\0\\1\\\alpha
\end{pmatrix},$$
which implies that $l\equiv const$ provided $\alpha\equiv const$ and
thus $\widetilde{M}$ is homogeneous by (\ref{equiv-CPC-homog}).

(iii) When $g=6$, as mentioned before,  \cite{Pa} proved that
$m_1=m_2=m$ must be 1. We need only to prove that $\alpha$ is always
non-constant in this case. By virtue of \cite{Xiao}, for any
$c_1,c_2,c_3$ satisfying $c_1^2+c_2^2+c_3^2=1$, there is a point
$x\in M$ such that
$$\phi_i(x)^2=\frac{c_i^2}{1+\lambda_i^2},\quad \phi_{i+3}(x)^2=\frac{c_i^2}{1+\lambda_{i+3}^2},\quad for~~i=1,2,3.$$
Then
$$\alpha(x)=(\lambda_1+\lambda_4)c_1^2+(\lambda_2+\lambda_5)c_2^2+(\lambda_3+\lambda_6)c_3^2.$$
Now let $c_1=1,~c_2=c_3=0$ and $x'\in M$ be the corresponding point.
Then $\alpha(x')=\lambda_1+\lambda_4$. Similarly, let
$c_1=c_3=0,~c_2=1$ and $x''\in M$ be the corresponding point. Then
$\alpha(x'')=\lambda_2+\lambda_5$. Therefore, $\alpha$ is always
non-constant on $M$ as we required.

The proof is now complete. \hfill $\Box$


\textbf{\emph{Proof of Corollary \ref{Chern exam}.}} Assume that
$\widetilde{M}$ is an inhomogeneous hypersurface in $\mathbb{C}P^n$
with constant $1$st, $2$nd, and $3$rd mean curvatures $H_1,H_2,H_3$.
Then by equalities (\ref{sigma123ofS Stilde}), the inverse image
$M=\pi^{-1}(\widetilde{M})$ in $S^{2n+1}$ under the Hopf fibration
has constant $1$st mean curvature $H_1$ and constant $2$nd mean
curvature $H_2-1$. It suffices to show that $M$ is not an
isoparametric hypersurface. We will prove this by contradiction.

Suppose $M$ is isoparametric. It follows from Proposition \ref{lift of isop to
Sn} that $\widetilde{M}$ is also an isoparametric hypersurface in
$\mathbb{C}P^n$. Since $\widetilde{M}$ has constant $3$rd mean
curvature by assumption, it is homogeneous by Theorem \ref{H3classification in Cpn},
which contradicts the assumption that $\widetilde{M}$ is an inhomogeneous hypersurface.
 \hfill $\Box$

\section{Isoparametric hypersurfaces in rank one symmetric spaces}
In this section, by using the Riccati equation we first derive some
``weakly" inductive formulae for $Q_k:=\rho_k(S(t))$ on parallel
hypersurfaces $M_t$ in a general Riemannian manifold in the spirit
of Theorem \ref{implicit CM eqs}. Next, by using further symmetries of
the Jacobi operator on a complex space form, more generally, on a locally
rank one symmetric space, we will prove Theorem
\ref{3isop rk one sp} and Theorem \ref{compatible rk one sp}.

For our purpose, let $\{M_t: t\in(-\varepsilon,\varepsilon)\}$ be a family of
parallel hypersurfaces in a Riemannian manifold $N^{n+1}$,
$\nu_t$ the unit normal vector field on $M_t$, $S(t)=S_{\nu_t}$ the
shape operator of $M_t$, and $R(t)=K_{\nu_t}$ the normal
Jacobi operator (see definition in Remark \ref{Jacobi remark}) on
$M_t$. It is convenient to denote the covariant derivatives of the operators $S(t)$,
$R(t)$ along normal geodesics of $M_t$ by
$S'(t):=\nabla_{\nu_t}S(t)$, $R'(t):=\nabla_{\nu_t}R(t)$,
respectively. In this way, the well known Riccati equation can be given by
(cf. \cite{Gr04}):
\begin{equation}\label{Riccati eq}
S'(t)=S(t)^2+R(t).
\end{equation}
By taking trace with respect to a parallel orthonormal frame
$\{E_1,\cdots,E_n\}$ along a normal geodesic of $M_t$, we get the
Riccati equation for the mean curvature $H(t)$ of $M_t$:
\begin{equation}\label{Riccati eq for mean curv}
H'(t)=\|S(t)\|^2+Ric(t),
\end{equation}
where $Ric(t)=Ric(\nu_t,\nu_t)$ denotes the Ricci curvature of $N$ in the
normal direction of $M_t$.


\begin{prop}\label{1 isop 2 isop}
A $1$-isoparametric hypersurface in an Einstein manifold $N^{n+1}$
must be $2$-isoparametric.
\end{prop}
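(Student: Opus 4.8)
The plan is to reduce everything to first-order data via the trace form of the Riccati equation, using the Einstein condition to control the Ricci term.

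First I would invoke Lemma \ref{geom. meaning for kisop}: a hypersurface $M=M_{t_0}$ is $1$-isoparametric precisely when its first mean curvature $H_1(t)=\sigma_1(S(t))$ is constant on each nearby parallel hypersurface $M_t$, i.e.\ depends only on $t$; and $M$ is $2$-isoparametric precisely when, in addition, $H_2(t)=\sigma_2(S(t))$ is constant on each $M_t$. By the equivalence (\ref{sigkrhok}), it suffices to show that the power sums $Q_1(t)=\rho_1(S(t))$ and $Q_2(t)=\rho_2(S(t))=\|S(t)\|^2$ are constant on each $M_t$. The first is just $H_1(t)$, constant on $M_t$ by hypothesis, so the entire problem reduces to proving that $\|S(t)\|^2$ is constant along each leaf $M_t$.

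Next I would feed the hypothesis into the scalar Riccati equation (\ref{Riccati eq for mean curv}),
\[
H'(t)=\|S(t)\|^2+Ric(\nu_t,\nu_t),
\]
where $H(t)=Q_1(t)$ is the mean curvature and $Ric(\nu_t,\nu_t)$ is the Ricci curvature in the normal direction. Because $N^{n+1}$ is Einstein, $Ric=c\,g$ for a constant $c$, hence $Ric(\nu_t,\nu_t)=c$ for every unit normal $\nu_t$, independently of the point and of the leaf. Therefore $\|S(t)\|^2=H'(t)-c$. It then remains only to observe that the right-hand side is constant on each $M_t$: since $M$ is $1$-isoparametric, $H(t)=Q_1(t)$ is a function of $t$ alone, and because $t$ is arc length along the normal geodesics with $\nabla_{\nu_t}$ differentiation along them, $H'(t)=\nabla_{\nu_t}H(t)$ is again a function of $t$ alone. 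Thus $\|S(t)\|^2=H'(t)-c$ is constant on each leaf, so $Q_2(t)$ is constant; together with $Q_1(t)$ this yields $H_2(t)$ constant via (\ref{sigkrhok}), and Lemma \ref{geom. meaning for kisop} gives that $M$ is $2$-isoparametric.

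I do not expect a substantial obstacle here: the proof is essentially bookkeeping, and the one point that must be gotten right is conceptual rather than computational, namely that the Einstein hypothesis is exactly what forces the normal Ricci term $Ric(\nu_t,\nu_t)$ in (\ref{Riccati eq for mean curv}) to be a fixed constant. It is this single fact that allows the scalar Riccati identity to upgrade the constancy of $Q_1$ on each leaf into that of $Q_2$; without the Einstein condition, $Ric(\nu_t,\nu_t)$ could vary over $M_t$ and the conclusion would break down.
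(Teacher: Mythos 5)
Your proof is correct and follows essentially the same route as the paper's: the paper's own argument also combines the scalar Riccati equation (\ref{Riccati eq for mean curv}) with the Einstein condition $Ric(\nu_t,\nu_t)\equiv\rho$, the constancy of $H'(t)$ on each leaf, Lemma \ref{geom. meaning for kisop}, and Newton's identities (\ref{Newton identity}). You have merely written out in full the bookkeeping that the paper compresses into two sentences, and you correctly identify the Einstein hypothesis as the single point where the argument could fail.
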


\begin{proof}
Observe that $H(t)$ is now a function depending only on $t$, and
hence $H'(t)$ is constant on $M_t$. Since $Ric(t)\equiv \rho$ the
Einstein constant, the conclusion follows immediately from the
equality (46), Lemma \ref{geom. meaning for kisop} and Newton's
identities (\ref{Newton identity}).
\end{proof}
For $i\geq0,~j\geq0$, $t\in(-\varepsilon,\varepsilon)$, we introduce a
function $\Gamma_{ij}(t)$ on $M_t$ by
\begin{equation}\label{Gammaij}
\Gamma_{ij}(t):=tr\Big(S(t)^iR(t)^j\Big).
\end{equation}
Clearly, $\Gamma_{i0}(t)=tr(S(t)^i)=\rho_i(S(t))=:Q_i(t)$. As
discussed before, applying Lemma \ref{geom. meaning for kisop} and
Newton's identities (\ref{Newton identity}), once we find some
inductive formulae for the $Q_i$'s on $M_t$ as those in Theorem
\ref{implicit CM eqs}, we would establish similar rigidity results
with Cartan's rigidity result (\ref{cartan result}), Theorem
\ref{existence in CPn} (i) and Theorem \ref{H3classification in
Cpn}. Towards this aim, we simply take a derivative of $Q_i$ with
respect to $t$ to obtain:


\begin{lem} With notations as above,
\begin{equation}\label{inductive Qi general}
Q_{i+1}(t)=\frac{1}{i}Q_i'(t)-\Gamma_{i-1, 1}(t).
\end{equation}
\end{lem}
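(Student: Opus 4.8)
The plan is to obtain the identity by directly differentiating $Q_i(t)=tr\big(S(t)^i\big)$ along the normal geodesics and then invoking the Riccati equation (\ref{Riccati eq}). First I would fix a parallel orthonormal frame $\{E_1,\cdots,E_n\}$ along a normal geodesic, exactly as in the derivation of (\ref{Riccati eq for mean curv}); with respect to such a frame the covariant derivative $\nabla_{\nu_t}$ acts as the ordinary entrywise derivative on the matrix representing $S(t)$, so that $S'(t)=\nabla_{\nu_t}S(t)$ is just $\tfrac{d}{dt}$ of the matrix entries. Since $\nabla_{\nu_t}$ is a derivation and commutes with taking trace, the Leibniz rule gives $\nabla_{\nu_t}\big(S(t)^i\big)=\sum_{k=0}^{i-1}S(t)^k\,S'(t)\,S(t)^{i-1-k}$, and the cyclic invariance of the trace collapses each of the $i$ summands to the same value. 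Hence I expect
\begin{equation*}
Q_i'(t)=tr\big(\nabla_{\nu_t}(S(t)^i)\big)=i\,tr\big(S(t)^{i-1}S'(t)\big).
\end{equation*}

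The second step is to substitute the Riccati equation $S'(t)=S(t)^2+R(t)$ into the last expression. This yields
\begin{equation*}
Q_i'(t)=i\,tr\big(S(t)^{i-1}(S(t)^2+R(t))\big)=i\,tr\big(S(t)^{i+1}\big)+i\,tr\big(S(t)^{i-1}R(t)\big).
\end{equation*}
Reading off the two terms against the definitions $Q_{i+1}(t)=tr(S(t)^{i+1})$ and $\Gamma_{i-1,1}(t)=tr(S(t)^{i-1}R(t))$ from (\ref{Gammaij}), and then dividing by $i$ (which is legitimate for $i\geq1$, the range in which $\Gamma_{i-1,1}$ is defined), I would arrive at $\tfrac{1}{i}Q_i'(t)=Q_{i+1}(t)+\Gamma_{i-1,1}(t)$, which is precisely the asserted formula (\ref{inductive Qi general}) after rearrangement.

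There is no serious obstacle here; the only point that deserves care is the interchange in the first step, namely the assertion that $tr\circ\nabla_{\nu_t}=\tfrac{d}{dt}\circ tr$ and that the Leibniz rule together with cyclicity of the trace produce the factor $i$. This is exactly where the choice of a \emph{parallel} frame is essential: because the frame is parallel, $S(t)$ is genuinely a curve of symmetric matrices and the computation reduces to elementary matrix calculus, so the covariant terms that would otherwise arise all vanish. I would also remark that the formula is a pointwise identity along each normal geodesic and holds regardless of whether $Q_i$ is constant on $M_t$; its usefulness, as in Theorem \ref{implicit CM eqs}, comes precisely when $Q_i$ depends only on $t$, so that $Q_i'(t)$ is itself constant on $M_t$ and the recursion can be iterated.
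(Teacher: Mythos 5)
Your proposal is correct and follows essentially the same route as the paper: differentiate $Q_i(t)=tr\big(S(t)^i\big)$ along the normal geodesic, use the Leibniz rule together with the cyclic invariance of the trace to get $Q_i'(t)=i\,tr\big(S(t)^{i-1}S'(t)\big)$, and substitute the Riccati equation (\ref{Riccati eq}). Your added remarks on the role of the parallel frame and on the pointwise nature of the identity are sound but not needed beyond what the paper's one-line computation already contains.
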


\begin{proof}
It follows directly from the definitions and the Riccati equation
(\ref{Riccati eq}) that
\begin{eqnarray*}
&&Q_i'(t)=tr\Big(\nabla_{\nu_t}S(t)^i\Big)=\sum_{j=0}^{i-1}tr\Big(S(t)^j\Big(\nabla_{\nu_t}S(t)\Big)S(t)^{i-1-j}\Big)\nonumber\\
&&\qquad\ =i~tr\Big(S(t)^{i-1}S'(t)\Big)=i\Big(tr(S(t)^{i+1})+ tr(S(t)^{i-1}R(t))\Big)\\
&&\qquad\ =i~\Big(Q_{i+1}(t)+\Gamma_{i-1, 1}(t)\Big)\nonumber
\end{eqnarray*}
as required.
\end{proof}

However, the inductive formula (\ref{inductive Qi general}) does not
work effectively unless the functions $\Gamma_{i-1, 1}(t)$ are
constant on $M_t$. So we need to investigate some inductive
properties of the functions $\Gamma_{i-1, 1}(t)$ as the following.


\begin{lem} With notations as above,
\begin{equation}\label{inductive Gamma general}
\Gamma_{i+1, 1}(t)=\frac{1}{i}\Big\{\Gamma_{i1}'(t)-\sum_{j=0}^{i-1}tr\Big(S(t)^jR(t)S(t)^{i-1-j}R(t)\Big)-tr\Big(S(t)^iR'(t)\Big)\Big\}.
\end{equation}
\end{lem}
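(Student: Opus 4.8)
The plan is to mimic the proof of the preceding lemma, differentiating $\Gamma_{i1}(t)=tr(S(t)^iR(t))$ along a normal geodesic with respect to a parallel orthonormal frame $\{E_1,\cdots,E_n\}$, so that the $t$-derivative commutes with the trace. First I would apply the Leibniz rule to split $\Gamma_{i1}'(t)=tr\big(\nabla_{\nu_t}(S(t)^iR(t))\big)$ into the two terms $tr\big((\nabla_{\nu_t}S(t)^i)R(t)\big)$ and $tr\big(S(t)^i(\nabla_{\nu_t}R(t))\big)=tr(S(t)^iR'(t))$, the latter being exactly the last term in the claimed formula.

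Next I would expand the first term using $\nabla_{\nu_t}S(t)^i=\sum_{j=0}^{i-1}S(t)^jS'(t)S(t)^{i-1-j}$ and then substitute the Riccati equation $S'(t)=S(t)^2+R(t)$ from (\ref{Riccati eq}). This splits each summand into an $S^2$-contribution and an $R$-contribution. The key observation is that, by the cyclic invariance of the trace, every $S^2$-contribution $tr\big(S(t)^jS(t)^2S(t)^{i-1-j}R(t)\big)$ collapses to the single quantity $tr(S(t)^{i+1}R(t))=\Gamma_{i+1,1}(t)$, so the $i$ summands together yield $i\,\Gamma_{i+1,1}(t)$. The $R$-contributions give precisely $\sum_{j=0}^{i-1}tr\big(S(t)^jR(t)S(t)^{i-1-j}R(t)\big)$, which cannot be collapsed further since $S(t)$ and $R(t)$ need not commute.

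Collecting these pieces gives the identity
\[
\Gamma_{i1}'(t)=i\,\Gamma_{i+1,1}(t)+\sum_{j=0}^{i-1}tr\big(S(t)^jR(t)S(t)^{i-1-j}R(t)\big)+tr(S(t)^iR'(t)),
\]
and solving for $\Gamma_{i+1,1}(t)$ yields the stated formula. I do not expect a serious obstacle here; the only point requiring care is the bookkeeping in the Leibniz expansion of $\nabla_{\nu_t}S(t)^i$ and the verification that exactly the $S^2$-terms—and none of the $S$--$R$ cross terms—collapse under the trace. The reason the cross terms survive is precisely the noncommutativity of $S(t)$ and $R(t)$, which is also why this inductive formula is only ``weakly'' effective unless the $\Gamma$-type quantities are independently known to be constant on $M_t$.
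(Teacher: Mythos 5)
Your proposal is correct and follows essentially the same route as the paper: differentiate $\Gamma_{i1}(t)$ along the normal geodesic, expand $\nabla_{\nu_t}S(t)^i$ by the Leibniz rule, substitute the Riccati equation $S'(t)=S(t)^2+R(t)$, and note that the $S^2$-contributions collapse to $i\,\Gamma_{i+1,1}(t)$ (indeed $S(t)^jS(t)^2S(t)^{i-1-j}=S(t)^{i+1}$ outright, so not even trace cyclicity is needed there) while the cross terms and the $R'$-term survive exactly as in the stated formula.
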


\begin{proof}
Similarly, it follows directly from the definitions and the Riccati equation
(\ref{Riccati eq}) that
\begin{eqnarray}
\Gamma_{i1}'(t)&=&\sum_{j=0}^{i-1}tr\Big(S(t)^jS'(t)S(t)^{i-1-j}R(t)\Big)+tr\Big(S(t)^iR'(t)\Big)\nonumber\\
&=&i~\Gamma_{i+1, 1}(t)+\sum_{j=0}^{i-1}tr\Big(S(t)^jR(t)S(t)^{i-1-j}R(t)\Big)+tr\Big(S(t)^iR'(t)\Big).\nonumber
\end{eqnarray}
\end{proof}

In order to make these inductive formulae work effectively, we put
some restrictions on the ambient manifold so as to control the last
two terms on the right side of (\ref{inductive Gamma general}). To
be more precise, we have the following result stated in Remark
\ref{Jacobi remark}.


\begin{cor}\label{Jacobi cor}
Let $N^{n+1}$ be a locally symmetric space with the property that,
the first and second elementary symmetric polynomials on eigenvalues
of its Jacobi operator $K_{\xi}$ are constant and independent of the
choice of the unit tangent vector $\xi\in\mathcal {T}N$. Then any
$1$-isoparametric hypersurface in $N$ must be $2$-isoparametric,
additionally, any $3$-isoparametric hypersurface in $N$ must be
$4$-isoparametric.
\end{cor}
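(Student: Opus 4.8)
The plan is to read off both implications directly from the inductive formulae (\ref{inductive Qi general}) and (\ref{inductive Gamma general}), once the ambient hypotheses are translated into constancy statements. First I would recall, via Lemma \ref{geom. meaning for kisop} together with the equivalence (\ref{sigkrhok}), that a hypersurface is $k$-isoparametric precisely when the power sums $Q_1(t),\dots,Q_k(t)$ are constant on each parallel leaf $M_t$, i.e.\ are functions of $t$ alone. The two inputs from the ambient geometry are: (a) since $N$ is locally symmetric, the curvature tensor is parallel, so the normal Jacobi operator $R(t)=K_{\nu_t}$ is parallel along the normal geodesics, giving $R'(t)=0$ (this is the content of Remark \ref{Jacobi remark}); and (b) by hypothesis the first two elementary symmetric functions of the eigenvalues of $K_\xi$, equivalently by Newton's identities (\ref{Newton identity}) the power sums $tr\,R=\rho_1(K_\xi)$ and $tr\,R^2=\rho_2(K_\xi)$, are universal constants, independent of the point and of $\xi$.

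For the first implication I would argue exactly as in Proposition \ref{1 isop 2 isop}. Taking $i=1$ in (\ref{inductive Qi general}) gives $Q_2(t)=Q_1'(t)-\Gamma_{0,1}(t)$, where by (\ref{Gammaij}) we have $\Gamma_{0,1}(t)=tr\,R(t)=\rho_1(K_{\nu_t})$, a universal constant by (b). Under the $1$-isoparametric hypothesis $Q_1$ is a function of $t$, hence so is $Q_1'$, and therefore $Q_2$ is a function of $t$. Thus $Q_1,Q_2$ are both constant on each $M_t$, which by (\ref{sigkrhok}) and Lemma \ref{geom. meaning for kisop} is exactly $2$-isoparametricity.

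For the second implication the point is to show that the mixed traces $\Gamma_{1,1}$ and $\Gamma_{2,1}$ entering the recursion collapse to functions of $t$. Assume $M$ is $3$-isoparametric, so $Q_1,Q_2,Q_3$ are functions of $t$. Rearranging the $i=2$ case of (\ref{inductive Qi general}) gives $\Gamma_{1,1}(t)=\tfrac12 Q_2'(t)-Q_3(t)$; since $Q_2,Q_3$ are functions of $t$, so is $\Gamma_{1,1}$, and hence so is $\Gamma_{1,1}'$. Next I would feed $i=1$ into (\ref{inductive Gamma general}), which reads $\Gamma_{2,1}(t)=\Gamma_{1,1}'(t)-tr\,R(t)^2-tr\big(S(t)R'(t)\big)$; by (a) the last term vanishes and by (b) the term $tr\,R^2$ is a universal constant, so $\Gamma_{2,1}$ is a function of $t$. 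Finally, taking $i=3$ in (\ref{inductive Qi general}) yields $Q_4(t)=\tfrac13 Q_3'(t)-\Gamma_{2,1}(t)$, which is then a function of $t$. Hence $Q_1,\dots,Q_4$ are all constant on each $M_t$ and $M$ is $4$-isoparametric.

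I expect no serious obstacle beyond careful bookkeeping: the only conceptual step is recognizing that the a priori uncontrolled quantity $\Gamma_{1,1}=tr(SR)$ is pinned down by the $Q$-recursion itself, so that it need not be estimated independently, while the potentially troublesome terms $tr(S^jRS^{i-1-j}R)$ and $tr(SR')$ in (\ref{inductive Gamma general}) are tamed precisely by hypotheses (a) and (b). One should double-check that the operators $S(t)$ and $R(t)$ are read in a fixed parallel frame along each normal geodesic, as in the setup preceding the Riccati equation (\ref{Riccati eq}), so that the derivatives $Q_i'$, $\Gamma_{i,1}'$ and the Riccati identity all live on the same space; with that identification the computation is routine.
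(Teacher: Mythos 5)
Your proposal is correct and follows essentially the same route as the paper: the first implication is the paper's Proposition \ref{1 isop 2 isop} (Einstein via constant $\sigma_1(K_\xi)$, equivalently the $i=1$ case of (\ref{inductive Qi general})), and the second is precisely the paper's chain $\Gamma_{11}=\tfrac12 Q_2'-Q_3$, then $\Gamma_{21}=\Gamma_{11}'-tr(R^2)$ using $R'\equiv 0$ and $tr(R^2)\equiv const$ (the latter from the hypothesis via Newton's identities (\ref{sigkrhok})), then $Q_4=\tfrac13 Q_3'-\Gamma_{21}$. Your added bookkeeping remarks (parallel frame along normal geodesics, translation of elementary symmetric polynomials to power sums) are exactly the implicit conventions of the paper's setup.
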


\begin{proof} The first assertion is a consequence of Proposition \ref{1 isop 2
isop}, since $N$ is now an Einstein manifold. To prove the second
assertion, it suffices to show that
\begin{equation*}
Q_4(t)=\Gamma_{40}(t)=tr(S(t)^4)
\end{equation*} is constant on each nearby
parallel hypersurface $M_t$ of a $3$-isoparametric hypersurface $M$
in $N$ for $t\in (-\varepsilon,\varepsilon)$. Since $M$ is
$3$-isoparametric, $Q_1(t)$, $Q_2(t)$ and $Q_3(t)$ are all constant on
$M_t$ and thus smooth functions depending only on $t$. Then by
(\ref{inductive Qi general}),
\begin{equation*}
\Gamma_{11}(t)=\frac{1}{2}Q_2'(t)-Q_3(t)
\end{equation*} is a smooth function
depending only on $t$. Consequently, by (\ref{inductive Gamma
general}),
\begin{equation*}
\Gamma_{21}(t)=\Gamma_{11}'(t)-tr(R(t)^2)
\end{equation*} is a smooth
function depending only on $t$, since by assumptions we have
\begin{equation*}
tr(R(t)^2)\equiv const~~ and~~ R'(t)\equiv0.
\end{equation*} Again by
(\ref{inductive Qi general}),
\begin{equation*}
Q_4(t)=\frac{1}{3}Q_3'(t)-\Gamma_{21}(t)
\end{equation*} is a smooth function
depending only on $t$, which completes the proof.
\end{proof}

As mentioned in Remark \ref{Jacobi remark}, the restrictions we put
in this corollary is not so strong that, there exist many locally
symmetric spaces with rank greater than one satisfying these
conditions. Now we will be concerned with the locally rank one
symmetric spaces $N^{n+1}$. Obviously, the Jacobi operator
$K_{\xi}$ has constant eigenvalues independent of the choice of the
unit tangent vector $\xi\in\mathcal {T}N$. This property will be useful
in the establishment of further rigidity results. To warm up before the
proof of the theorems, we deal with the case when $K_{\xi}$ has only
one constant eigenvalue $c$ besides the trivial $0$-eigenvalue,
\emph{i.e.}, when $N^{n+1}$ is a real space form with constant
sectional curvature $c$. Now, we derive Cartan's rigidity result
(\ref{cartan result}) as a simple application of the inductive
formulae (\ref{inductive Qi general})-(\ref{inductive Gamma general}).


\textbf{\emph{Proof of (\ref{cartan result}).}} With notations as
before, $R(t)\equiv cI$, $\Gamma_{i1}(t)=cQ_i(t)$ and $R'(t)\equiv0$
under any orthonormal frame of $M_t$. Then either of (\ref{inductive
Qi general}) and (\ref{inductive Gamma general}) can be deduced to
$$Q_{i+1}(t)=\frac{1}{i}Q_i'(t)-cQ_{i-1}(t),$$
which immediately implies (\ref{cartan result}) by induction (note
that the preceding formula differs from that in Theorem \ref{implicit CM
eqs} as the parameter $t$ has different meanings). \hfill $\Box$


\textbf{\emph{Proof of Theorem \ref{3isop rk one sp}.}} Obviously we
need only to consider the case when $N^{n+1}$ is a locally rank one
symmetric space with non-constant sectional curvatures. In this case
the Jacobi operator $K_{\xi}$ has two distinct non-zero constant
eigenvalues $\kappa_1,\kappa_2$ independent of $\xi$. Let $M^n$ be a
$3$-isoparametric hypersurface in $N^{n+1}$ and $M_t$
$(t\in(-\varepsilon,\varepsilon))$ nearby parallel hypersurfaces
with unit normal vector fields $\nu_t$. Then there exists a local
orthonormal frame $\{e_1(t),\cdots,e_n(t)\}$ of $M_t$ parallel
along normal geodesics of $M$, \emph{i.e.},
$\nabla_{\nu_t}e_i(t)=0$, such that under this frame,
\begin{equation}\label{Jacobi rk one}
R(t)=K_{\nu_t}|_{\mathcal
{T}M_t}=diag(\kappa_1I_{n-m},~\kappa_2I_{m}),
\end{equation}
 where $m=1,3,or~7$
corresponding to the case when $N$ is locally a complex space form,
a quaternionic space form, or an octonionic space form,
respectively. Therefore, as in the proof of Corollary \ref{Jacobi
cor}, $Q_1(t),Q_2(t),Q_3(t),\Gamma_{11}(t),\Gamma_{21}(t),Q_4(t)$
are all smooth functions depending only on $t$. Decompose $S(t)^i$
into the same blocks as $R(t)$ above for $i\geq0$, namely,
\begin{equation}\label{blocks of Si}
S(t)^i=\left(\begin{array}{c|c}
A_i & B_i \\
\hline B_i^t & C_i
\end{array}\right), \quad A_0=I_{n-m},~~C_0=I_m,~~B_0=0,
\end{equation}
where $A_i,C_i$ are symmetric matrices of order $n-m$ and $m$,
respectively. These arguments yield
\begin{equation}\label{blocks}
Q_i(t)=tr(A_i)+tr(C_i),\quad
\Gamma_{ij}(t)=\kappa_1^j~tr(A_i)+\kappa_2^j~tr(C_i).
\end{equation}
Notice that $\kappa_1$, $\kappa_2$ are two distinct constants. It follows that if
for some $i,j\geq1$, $Q_i(t)$, $\Gamma_{ij}(t)$ are smooth functions
depending only on $t$, then by (\ref{blocks}), $tr(A_i)$, $tr(C_i)$
and hence $\Gamma_{ik}(t)$ are also smooth functions depending only
on $t$ for any $k\geq1$. Now since
$Q_1(t),Q_2(t),\Gamma_{11}(t),\Gamma_{21}(t)$ are such functions,
$\Gamma_{1k}(t)$, $\Gamma_{2k}(t)$ are smooth functions depending
only on $t$ for any $k\geq1$. Then by (\ref{inductive Qi general}), (\ref{inductive Gamma
general}) again,
\begin{equation*}
\Gamma_{31}(t)=\frac{1}{2}\Gamma_{21}'(t)-\Gamma_{12}(t),\quad
Q_5(t)=\frac{1}{4}Q_4'(t)-\Gamma_{31}(t),
\end{equation*}
are smooth functions depending only on $t$, which means that $M$ is
$5$-isoparametric. It completes the proof of the first part of
the theorem.

Now assume $N^{n+1}$ is locally a complex space form. Then $m=1$ in
the diagonalization (\ref{Jacobi rk one}) of $R(t)$, and $C_i$ in
the block decomposition (\ref{blocks of Si}) of $S(t)^i$ is a real
number (function) for each $i\geq1$. To prove the second part of the
theorem, \emph{i.e.}, a $3$-isoparametric hypersurface $M$ in
$N^{n+1}$ must be totally isoparametric, it suffices to prove the
following:


\begin{lem}\label{tr SiRSjR}
With notations and assumptions as above. If for some $i\geq1$,
$Q_1(t)$,$\cdots$,\\
$Q_{i+2}(t)$, $\Gamma_{11}(t)$,$\cdots$,$\Gamma_{i1}(t)$
are smooth functions depending only on $t$, then so are
$Q_{i+3}(t)$ and $\Gamma_{i+1, 1}(t)$.
\end{lem}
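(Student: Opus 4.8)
The plan is to read Lemma \ref{tr SiRSjR} as the single inductive step that will be iterated to force all $Q_k(t)$ to depend only on $t$, and to localize the entire difficulty in the one ``obstruction'' sum appearing in (\ref{inductive Gamma general}). First I would observe that the conclusion about $Q_{i+3}(t)$ is cheap once $\Gamma_{i+1,1}(t)$ is under control: applying (\ref{inductive Qi general}) with index $i+2$ gives $Q_{i+3}(t)=\frac{1}{i+2}Q_{i+2}'(t)-\Gamma_{i+1,1}(t)$, and since $Q_{i+2}(t)$ depends only on $t$ by hypothesis, so does $Q_{i+2}'(t)$. Thus the whole lemma reduces to showing that $\Gamma_{i+1,1}(t)$ depends only on $t$. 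Because $N^{n+1}$ is locally symmetric we have $R'(t)\equiv0$ (Remark \ref{Jacobi remark}), so (\ref{inductive Gamma general}) collapses to
\begin{equation*}
\Gamma_{i+1,1}(t)=\frac{1}{i}\Big(\Gamma_{i1}'(t)-P_i(t)\Big),\qquad P_i(t):=\sum_{j=0}^{i-1}tr\Big(S(t)^jR(t)S(t)^{i-1-j}R(t)\Big).
\end{equation*}
Since $\Gamma_{i1}(t)$, hence $\Gamma_{i1}'(t)$, depends only on $t$ by hypothesis, it remains only to prove that the obstruction $P_i(t)$ depends only on $t$.

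Next I would record what the hypothesis buys at the level of the blocks in (\ref{blocks of Si}). For $k=0,1,\dots,i$ both $Q_k(t)$ and $\Gamma_{k1}(t)$ depend only on $t$ ($Q_0=n$ and $\Gamma_{01}=tr(R)$ trivially, the remaining ones by hypothesis), so the invertibility of the $2\times2$ system coming from $\kappa_1\neq\kappa_2$ --- exactly the observation already made immediately after (\ref{blocks}) --- shows that $tr(A_k)$ and the scalar $C_k$ depend only on $t$ for every $k\le i$. Expanding each summand of $P_i(t)$ in the blocks of (\ref{blocks of Si}) using $R(t)=diag(\kappa_1I_{n-1},\kappa_2)$ then yields, by a direct computation,
\begin{equation*}
tr\Big(S^jRS^{i-1-j}R\Big)=\kappa_1^2\,tr(A_jA_{i-1-j})+2\kappa_1\kappa_2\,B_j^tB_{i-1-j}+\kappa_2^2\,C_jC_{i-1-j}.
\end{equation*}
So $P_i(t)$ is controlled once the three block-bilinear quantities $tr(A_jA_{i-1-j})$, $B_j^tB_{i-1-j}$ and $C_jC_{i-1-j}$ are shown to depend only on $t$ for $0\le j\le i-1$.

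The crux, and the only point where the complex-space-form assumption $m=1$ is used essentially, is this last reduction, and I would carry it out by reading off the two blocks of the matrix identity $S^{i-1}=S^jS^{i-1-j}$ in the decomposition (\ref{blocks of Si}). Its lower-right scalar entry gives $C_{i-1}=B_j^tB_{i-1-j}+C_jC_{i-1-j}$, whence $B_j^tB_{i-1-j}=C_{i-1}-C_jC_{i-1-j}$; because $m=1$ forces each $C_k$ to be a \emph{scalar}, the right-hand side is a combination of the known scalars $C_k$ ($k\le i-1$) and so depends only on $t$. The upper-left block gives, after taking trace, $tr(A_{i-1})=tr(A_jA_{i-1-j})+B_j^tB_{i-1-j}$, so $tr(A_jA_{i-1-j})=tr(A_{i-1})-B_j^tB_{i-1-j}$ depends only on $t$ as well, while $C_jC_{i-1-j}$ is a product of known scalars. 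Summing over $j$ shows $P_i(t)$, hence $\Gamma_{i+1,1}(t)$ and $Q_{i+3}(t)$, depends only on $t$, completing the proof. I expect this scalar-reduction to be the genuine obstacle: for the quaternionic and octonionic cases ($m=3,7$) the entry $C_k$ is a true $m\times m$ matrix, $tr(C_jC_{i-1-j})$ is no longer recoverable from the individual traces $tr(C_k)$, and the argument fails --- which is exactly why Theorem \ref{3isop rk one sp} upgrades $3$-isoparametric only to $5$-isoparametric in the general rank-one case, but all the way to totally isoparametric precisely when $m=1$.
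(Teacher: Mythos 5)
Your proof is correct and follows essentially the same route as the paper's: you use the identity $S(t)^{j}S(t)^{i-1-j}=S(t)^{i-1}$ in the block decomposition (\ref{blocks of Si}) to express $B_j^tB_{i-1-j}$ and $tr(A_jA_{i-1-j})$ via the scalars $C_k$ and traces $tr(A_k)$ (known from (\ref{blocks}) since $\kappa_1\neq\kappa_2$), plug these into the same trace expansion of $tr\bigl(S(t)^jR(t)S(t)^{i-1-j}R(t)\bigr)$, and close with (\ref{inductive Gamma general}) and (\ref{inductive Qi general}), exactly as in the paper. Your closing observation about why the argument breaks for $m=3,7$ (where $C_k$ is a genuine $m\times m$ block and $tr(C_jC_{i-1-j})$ is not determined by the $tr(C_k)$) is also accurate and consistent with the paper's distinction between Theorem \ref{3isop rk one sp}'s two conclusions.
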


Since then by the inductive formula (\ref{inductive Qi general}),
the assumption that $M$ is $3$-isoparametric will imply
$Q_1(t),Q_2(t),Q_3(t)$ and $\Gamma_{11}(t)$ are smooth functions
depending only on $t$. So by this lemma we can show inductively that
$\Gamma_{k1}(t)$ and $Q_k(t)$ are smooth functions depending only on
$t$ for each $k\geq1$. Therefore, $M$ is totally isoparametric as
required.

\textbf{\emph{Proof of Lemma \ref{tr SiRSjR}}.} Under the
assumptions, it follows from (\ref{blocks}) that
$tr(A_1)$,$\cdots$,\\
$tr(A_i)$, $C_1,\cdots,C_i$ are smooth functions
depending only on $t$. Note that $S(t)^{j}S(t)^{i-1-j}=S(t)^{i-1}$
for each $0\leq j\leq i-1$. Substituting this into the block
decomposition (\ref{blocks of Si}), we get
$$A_jA_{i-1-j}+B_jB_{i-1-j}^t=A_{i-1},\quad C_jC_{i-1-j}+B_{j}^tB_{i-1-j}=C_{i-1}.$$
Hence,
\begin{equation*}
tr(B_jB_{i-1-j}^t)=tr(B_{j}^tB_{i-1-j})=C_{i-1}-C_jC_{i-1-j}
\end{equation*} is a
smooth function depending only on $t$, and so is
\begin{equation*}
tr(A_jA_{i-1-j})=tr(A_{i-1})-tr(B_jB_{i-1-j}^t).
\end{equation*} On the other
hand, a direct calculation shows that for each $0\leq j\leq i-1$,
\begin{eqnarray*}
&&\quad tr\Big(S(t)^jR(t)S(t)^{i-1-j}R(t)\Big)\\
&&=\kappa_1^2~tr(A_jA_{i-1-j})+2\kappa_1\kappa_2~tr(B_jB_{i-1-j}^t)+\kappa_2^2~C_jC_{i-1-j}
\end{eqnarray*}
is then a smooth function depending only on $t$. Therefore, by
(\ref{inductive Gamma general}),
$$\Gamma_{i+1, 1}(t)=\frac{1}{i}\Big\{\Gamma_{i1}'(t)-\sum_{j=0}^{i-1}tr\Big(S(t)^jR(t)S(t)^{i-1-j}R(t)\Big)\Big\}$$
is a smooth function depending only on $t$, and so is, by
(\ref{inductive Qi general}),
$$Q_{i+3}(t)=\frac{1}{i+2}Q_{i+2}'(t)-\Gamma_{i+1, 1}(t).$$ \hfill $\Box$

The proof of Theorem \ref{3isop rk one sp} is now complete. \hfill
$\Box$


\textbf{\emph{Proof of Theorem \ref{compatible rk one sp}.}} Use the
same notations as before. Let $M^n$ be a curvature-adapted
hypersurface in a locally rank one symmetric space $N^{n+1}$ of
non-constant sectional curvatures. Denote by $M_t$, $t\in(-\varepsilon,\varepsilon)$ nearby parallel hypersurfaces of
$M_0=M$. Evidently, each $M_t$ is curvature-adapted and the principal
orthonormal eigenvectors $\{e_i(t)|i=1,\cdots,n\}$ of $M_t$ can be
chosen to be parallel along normal geodesics such that under this
frame, the normal Jacobi operator $R(t)$ can be diagonalized as in
(\ref{Jacobi rk one}), and the shape operator $S(t)$ can be
diagonalized as
\begin{equation*}
S(t)=diag(\mu_1(t),\cdots,\mu_n(t)),
\end{equation*}
where $\mu_i(t)$'s are principal curvature functions of $M_t$ (cf.
\cite{Gr04}). Moreover, \begin{equation*}
S'(t)=diag(\mu_1'(t),\cdots,\mu_n'(t)),
\end{equation*}
and thus the Riccati equation (\ref{Riccati eq}) can be written as
\begin{equation}\label{Riccati eq for curv-adapted}
\mu_i'(t)=\mu_i(t)^2+\kappa_i, \quad for~~i=1,\cdots,n,
\end{equation}
where $\kappa_i=\kappa_1$ for $i\leq n-m$ and $\kappa_i=\kappa_2$
for $i>n-m$. Therefore, since $\kappa_i$'s are constant, the
principal curvatures $\mu_i(t)$'s of $M_t$,
$t\in(-\varepsilon,\varepsilon)$, are uniquely determined by initial
values $\mu_i(0)$'s, the principal curvatures of $M$. So when
$\mu_i(0)$'s are constant on $M$, $\mu_i(t)$'s are constant on $M_t$,
which completes the proof of the first part (i).

As for the second part (ii), the assumption that $M$ is
$1$-isoparametric implies that $Q_1(t)=\sum_{i=1}^n\mu_i(t)$ is a
smooth function depending only on $t$. Introduce two functions by
$$\Phi_i(t):=\sum_{p=1}^{n-m}\mu_{p}(t)^i,\quad \Psi_i(t):=\sum_{p=n-m+1}^{n}\mu_{p}(t)^i.$$
 Then we have
\begin{equation}\label{Qi Phi Psi}
Q_i(t)=\Phi_i(t)+\Psi_i(t),\quad
\Gamma_{i1}(t)=\kappa_1\cdot\Phi_i(t)+\kappa_2\cdot\Psi_i(t),
\end{equation}
and by (\ref{Riccati eq for curv-adapted}),
\begin{equation}\label{inductive Phi Psi}
\Phi_{i}'(t)=i(\Phi_{i+1}(t)+\kappa_1\Phi_{i-1}(t)),\quad
\Psi_{i}'(t)=i(\Psi_{i+1}(t)+\kappa_2\Psi_{i-1}(t)),
\end{equation}
and meanwhile, formula (\ref{inductive Qi general}) can be rewritten
as
\begin{equation}\label{inductive Qi rk one curv-adapted}
\frac{1}{i}Q_i'(t)=Q_{i+1}(t)+\kappa_1Q_{i-1}(t)+(\kappa_2-\kappa_1)\Psi_{i-1}(t).
\end{equation}
Taking the $k$-th derivative of $Q_1(t)$ with respective to $t$ by
(\ref{inductive Phi Psi}), (\ref{inductive Qi rk one curv-adapted})
inductively, we obtain
\begin{equation}\label{Q_1 k derivative}
\frac{1}{k!}Q_1^{(k)}(t)=Q_{k+1}(t)+\sum_{j=0}^{k-1}\Big(c_{kj}Q_j(t)+d_{kj}\Psi_j(t)\Big),
\end{equation}
where $c_{kj},d_{kj}$ are some constants depending only on the
indices and $\kappa_1$, $\kappa_2$. As $Q_1(t)$ is a smooth function
depending only on $t$, so is $Q_1^{(k)}(t)$, i.e., $Q_1^{(k)}(t)$ is
constant on $M_t$ for each $k\geq0$. Fixing
$t\in(-\varepsilon,\varepsilon)$ in (\ref{Q_1 k derivative}), then
it follows that the principal curvatures $\mu_1(t),\cdots,\mu_n(t)$
of $M_t$ are solutions of the algebraic equations
\begin{equation}\label{Pk}
P_{k+1}(x_1,\cdots,x_n):=\rho_{k+1}(x_1,\cdots,x_n)+\widehat{P}_k(x_1,\cdots,x_n)=0,\quad
for~~k=0,\cdots,n-1,
\end{equation}
where $\rho_j(x_1,\cdots,x_n):=\sum_{i=1}^nx_i^j$ is the $j$-th
power sum over the variables $(x_1,\cdots,x_n)$ as in (\ref{rhok}), while
\begin{equation*}
\widehat{P}_k(x_1,\cdots,x_n):=\sum_{j=0}^{k-1}\Big(c_{kj}~\rho_j(x_1,\cdots,x_n)+d_{kj}~\rho_j(x_{n-m+1},\cdots,x_n)\Big)-\frac{1}{k!}Q_1^{(k)}(t)
\end{equation*}
is a polynomial of degree less than $k$ with constant coefficients
for $k\geq1$ and $\widehat{P}_0:=-Q_1(t)$ is a constant.

Finally, the case $n\leq2$ is not possible, since $N^{n+1}$
is a locally rank one symmetric space of non-constant sectional
curvature. For $n\geq3$, we
can not derive directly from (\ref{inductive Qi rk one
curv-adapted}) that $Q_i(t)$'s or $\mu_i(t)$'s are constant on $M_t$.
However, making use of the following lemma and (\ref{Pk}), we know that
$(\mu_1(t),\cdots,\mu_n(t))$ belongs to a finite subset of
$\mathbb{C}^n$ and thus $\mu_i(t)$'s are constant on $M_t$ since
$M_t$ is connected. It means that $M$ is totally isoparametric.

The proof is now complete. \hfill $\Box$


Now, we have to state explicitly the lemma on algebraic geometry
used above.
\begin{lem}\label{zero locus}For each $n\geq1$,
define polynomials $P_k\in\mathbb{C}[x_1,\cdots,x_n]$ by
$$P_k:=\rho_{k}(x_1,\cdots,x_n)+\widetilde{P}_{k-1}(x_1,\cdots,x_n),\quad
for ~~k=1,\cdots,n,$$ where $\rho_{k}$ is the $k$-th power sum
polynomial as before, $\widetilde{P}_{k-1}$ is an arbitrary
polynomial of degree less than $k$. Then $P_1,\cdots,P_n$ form a
regular sequence in $\mathbb{C}[x_1,\cdots,x_n]$. Consequently, the
dimension of each variety $V_k$ in $\mathbb{C}^n$ defined by
$P_1=\cdots=P_k=0$ is less than or equal to $n-k$ for
$k=1,\cdots,n$. In particular, $V_{n}$ is a finite subset of
$\mathbb{C}^n$.
\end{lem}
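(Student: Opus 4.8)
The plan is to exploit that the top-degree homogeneous component of $P_k$ is exactly the power sum $\rho_k$, so that the behaviour ``at infinity'' of the system $P_1=\cdots=P_n=0$ is governed by the homogeneous system $\rho_1=\cdots=\rho_n=0$, which I will show has only the trivial solution. First I would record that, since $\widetilde{P}_{k-1}$ has degree strictly less than $k=\deg\rho_k$, the leading form of $P_k$ is the homogeneous polynomial $\rho_k(x_1,\cdots,x_n)=\sum_{i=1}^n x_i^k$.

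The key algebraic input is that the power sums $\rho_1,\cdots,\rho_n$ have $\{0\}$ as their only common zero in $\mathbb{C}^n$. Indeed, if $\rho_1(x)=\cdots=\rho_n(x)=0$ at a point $x=(x_1,\cdots,x_n)$, then each term on the right of Newton's identities (\ref{Newton identity}) carries a factor $\rho_i=0$, so $k\sigma_k(x)=0$ for $k=1,\cdots,n$; the characteristic-zero hypothesis then yields $\sigma_1(x)=\cdots=\sigma_n(x)=0$. Hence the $x_i$ are the roots of $T^n=T^n-\sigma_1T^{n-1}+\cdots\pm\sigma_n$, forcing $x_1=\cdots=x_n=0$. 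Since $n$ homogeneous polynomials in $n$ variables whose common zero locus is the origin form a homogeneous system of parameters, and $\mathbb{C}[x_1,\cdots,x_n]$ is Cohen--Macaulay, $\rho_1,\cdots,\rho_n$ is already a regular sequence.

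Next I would transfer this to the non-homogeneous $P_k$, the cleanest route being to show that $V_n=V(P_1,\cdots,P_n)$ is finite. For this I would pass to the projective closure in $\mathbb{P}^n$: homogenizing each $P_k$ to degree $k$ produces a form whose restriction to the hyperplane at infinity $\{x_0=0\}$ is precisely $\rho_k$. By the previous step the homogenized system has no common zero on $\{x_0=0\}$, so the projective variety it defines is disjoint from a hyperplane; since every positive-dimensional projective variety meets every hyperplane, this variety is $0$-dimensional, whence its affine trace $V_n$ is finite. (Equivalently, one rescales a hypothetical unbounded sequence of points of $V_n$ onto the unit sphere to manufacture a nonzero common zero of the leading forms $\rho_k$, a contradiction.) Thus $\dim\mathbb{C}[x]/(P_1,\cdots,P_n)=0=n-n$, and since the polynomial ring is Cohen--Macaulay, the $n$ elements $P_1,\cdots,P_n$ cutting out a zero-dimensional quotient form a regular sequence.

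Finally, the dimension bounds are formal: each initial segment $P_1,\cdots,P_k$ of a regular sequence is again a regular sequence, so $(P_1,\cdots,P_k)$ has height $k$ and $\dim V_k=n-k$; alternatively, starting from $\dim V_n=0$ and using that intersecting with one further hypersurface $V(P_{j+1})$ drops dimension by at most one gives $\dim V_k\le\dim V_n+(n-k)=n-k$ directly, with $V_n$ finite as the case $k=n$. I expect the genuine obstacle to be the middle step, namely certifying that passing from the leading forms to the full polynomials creates neither components ``at infinity'' nor positive-dimensional affine components; the homogenization argument (or the rescaling argument) is exactly what rules this out, while the rest is bookkeeping with Newton's identities and standard Cohen--Macaulay dimension theory.
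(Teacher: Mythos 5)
Your proposal is correct, but it takes a genuinely different route from the paper's. The paper proves regularity of $P_1,\cdots,P_n$ directly by an algebraic minimal-relation argument: assuming $P_{k+1}$ is a zero divisor modulo $(P_1,\cdots,P_k)$, it chooses a relation $f_1P_1+\cdots+f_{k+1}P_{k+1}=0$ with the $f_i$ of minimal degree, extracts the homogeneous components of top degree to obtain a relation $\tilde{f}_{i_1}\rho_{i_1}+\cdots+\tilde{f}_{i_r}\rho_{i_r}=0$ among the power sums, and then invokes the known fact (cited from Smith's book) that $\rho_1,\cdots,\rho_n$ form a regular sequence to rewrite $\tilde{f}_{i_r}$ and lower the degree of $f_{i_r}$, contradicting minimality; the dimension bounds then follow from Cohen--Macaulayness just as in your final step. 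You instead argue geometrically: you prove the power-sum input from scratch via Newton's identities (the common zero locus of $\rho_1,\cdots,\rho_n$ is the origin --- a fact the paper only cites in its regular-sequence form), homogenize, note that the restriction of the homogenized system to the hyperplane at infinity is exactly $\rho_1=\cdots=\rho_n=0$, conclude that the projective zero locus is zero-dimensional since a positive-dimensional projective variety meets every hyperplane, and recover regularity from the height criterion in a Cohen--Macaulay ring. Both arguments pivot on the same leading-form principle --- restricting the homogenizations to $\{x_0=0\}$ is the geometric counterpart of the paper's extraction of top-degree components from a minimal relation --- but yours is more self-contained, and it delivers the finiteness of $V_n$ first, while the paper's is purely algebraic and needs neither the Nullstellensatz nor projective intersection theory.

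Two points to tighten. First, before invoking ``$n$ elements cutting out a zero-dimensional quotient form a regular sequence'' you must check $(P_1,\cdots,P_n)\neq\mathbb{C}[x_1,\cdots,x_n]$, i.e. $V_n\neq\emptyset$, since the paper's definition of a regular sequence includes properness of the ideal; this does follow from your setup, because $n$ hypersurfaces in $\mathbb{P}^n$ always have a common point, which by your argument cannot lie at infinity, but it should be said. Second, your parenthetical alternative for the bound $\dim V_k\le n-k$ --- that intersecting with one further hypersurface drops dimension by at most one, so $\dim V_k\le\dim V_n+(n-k)$ --- is not formal in the affine setting: a positive-dimensional component of $V_k$ could a priori miss $V(P_{k+1})$ entirely (affine intersections with hypersurfaces can be empty), and then its dimension would never propagate down to $V_n$. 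The first route you give, via initial segments of the regular sequence together with height and Cohen--Macaulay dimension theory, is the one that actually closes the argument.
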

\begin{proof}
First recall (cf. \cite{E}, \cite{Mat}) that a sequence $r_1,\cdots,r_k$
in a commutative ring $\mathcal {R}$ with identity is called a
\emph{regular sequence} if $(1)$ the ideal $(r_1,\cdots,r_k)\neq \mathcal
{R}$; $(2) $ $r_1$ is not a zero divisor in $\mathcal {R}$; and $(3)$ $r_{i+1}$
is not a zero divisor in the quotient ring $\mathcal
{R}/(r_1,\cdots,r_i)$ for $i=1,\cdots,k-1$.

Now we will work on the polynomial ring $\mathcal {R}=\mathbb{C}[x_1,\cdots,x_n]$. Obviously, it is
a \emph{Cohen-Macaulay} ring, possessing the property
that $dim(\mathcal {R}/(P_1,\cdots,P_k))=n-k$ for a regular sequence
$P_1,\cdots,P_k$ in  $\mathcal {R}$. Meanwhile, we know that
$dim(V_k)=dim(\mathcal {R}/I(V_k))$, where $I(V_k)\supset
(P_1,\cdots,P_k)$ is the ideal of the variety $V_k$. Therefore, when
$P_1,\cdots,P_n$ form a regular sequence, $dim(V_k)\leq n-k$ for
$k=1,\cdots,n$. In particular, $dim(V_n)=0$. The last assertion is due to
the facts that every variety in $\mathbb{C}^n$ can be expressed as a
union of finite irreducible varieties and that a zero-dimensional
irreducible variety in $\mathbb{C}^n$ is just a point. To complete the proof
of the lemma, it suffices to show that the polynomials $P_1,\cdots,P_n$ form a regular sequence in
$\mathcal {R}$.

Obviously, $P_1$ forms a regular sequence in $\mathcal {R}$. Suppose that
$P_1,\cdots,P_n$ do not form a regular sequence, there exists some $k$ with
$1\leq k<n$ such that $P_{k+1}$ is a zero divisor modulo
$(P_1,\cdots,P_k)$ in $\mathcal {R}$. Then we may choose a relation
of minimal degree of the form
\begin{equation}\label{minimal relation}
f_1P_1+\cdots+f_{k+1}P_{k+1}=0,
\end{equation}
 where
$f_1,\cdots,f_{k+1}$ are polynomials of minimal degrees modulo
$(P_1,\cdots,P_k)$. Denote by $D(>0)$ the maximal degree of
$f_iP_i$'s. Let $f_{i_1}P_{i_1},\cdots,f_{i_r}P_{i_r}$ be those of
maximal degree $D$ for some $1\leq i_1<\cdots<i_r\leq k+1$. Then one
can pick out the homogeneous components
$\tilde{f}_{i_1}\rho_{i_1},\cdots,\tilde{f}_{i_r}\rho_{i_r}$ of
maximal degree from them in equation (\ref{minimal relation}) such
that
\begin{equation}\label{maximal degree}
\tilde{f}_{i_1}\rho_{i_1}+\cdots+\tilde{f}_{i_r}\rho_{i_r}=0,
\end{equation}
where $\tilde{f}_{i_1},\cdots,\tilde{f}_{i_r}$ are the homogeneous
components of maximal degrees of $f_{i_1},\cdots,f_{i_r}$, respectively. Recall a
well known fact that the power sum polynomials
$\rho_1,\cdots,\rho_n$ form a regular sequence in $\mathcal {R}$
(\cite{Sm}). Then by (\ref{maximal degree}), $r>1$ and
$\tilde{f}_{i_r}\in(\rho_1,\cdots,\rho_{i_r-1})$, which imply that
there exist homogeneous polynomials $a_1,\cdots,a_{i_r-1}$ such that
$$\tilde{f}_{i_r}=a_1\rho_1+\cdots+a_{i_r-1}\rho_{i_r-1},$$
and therefore,
$$f_{i_r}=a_1P_1+\cdots+a_{i_r-1}P_{i_r-1}+\hat{f}_{i_r}\equiv \hat{f}_{i_r}, \quad mod~~ (P_1,\cdots,P_k),$$
where $\hat{f}_{i_r}$ is a polynomial of degree less than
$D-i_r=deg(f_{i_r})$, which contradicts the original choice of
minimal relation (\ref{minimal relation}).

The proof is now complete.
\end{proof}


We conclude this section with a brief proof of Remark \ref{Osserman}.

\textbf{\emph{Proof of Remark \ref{Osserman}}.} Suppose that the ambient
manifold $N^{n+1}$ is an Osserman manifold. Its Jacobi operator
$K_{\xi}$, by definition, has constant eigenvalues independent of
$\xi$ and points all over $N$. This property guarantees that the
normal Jacobi operator $R(t)=K_{\nu_t}$ of the parallel hypersurface
$M_t$ in $N^{n+1}$ has constant eigenvalues
$\kappa_1,\cdots,\kappa_n$ for any
$t\in(-\varepsilon,~\varepsilon)$, though the covariant derivative
$R'(t):=\nabla_{\nu_t}R(t)$ along normal direction $\nu_t$ might not
vanish, different from the case in a locally rank one symmetric space. Further,
we suppose that each $M_t$ in $N$ is curvature-adapted, that is, the
shape operator $S(t)$ and the normal Jacobi operator $R(t)$ are
simultaneously diagonalizable, which is automatically satisfied for
a curvature-adapted hypersurface in a locally symmetric space.
Therefore, the assertion in Remark \ref{Osserman} actually does
nothing but abandon the assumption $R'(t)=0$ in Theorem
\ref{compatible rk one sp}.

Let $\epsilon_1(t),\cdots,\epsilon_n(t)$ be a local orthonormal
frame of $M_t$ smoothly depending on $t$ such that they are
eigenvectors of $R(t)$ and $S(t)$ at the same time, corresponding to eigenvalues
$\kappa_1,\cdots,\kappa_n$ and $\mu_1(t),\cdots,\mu_n(t)$, respectively. Then under this frame,
$$\langle S'(t)\epsilon_i(t),\epsilon_i(t)\rangle=\mu_i(t)^2+\kappa_i.$$
The left side of the preceding equation can be deduced to
\begin{eqnarray}
\langle
S'(t)\epsilon_i(t),\epsilon_i(t)\rangle&=&\Big\langle\nabla_{\nu_t}(S(t)\epsilon_i(t))-S(t)\nabla_{\nu_t}\epsilon_i(t),~\epsilon_i(t)\Big\rangle\nonumber\\
&=&\mu_i'(t)+\Big\langle\Big(\mu_i(t)I-S(t)\Big)~\nabla_{\nu_t}\epsilon_i(t),~\epsilon_i(t)\Big\rangle\nonumber\\&=&\mu_i'(t).\nonumber
\end{eqnarray}
Hence, we still have the Riccati equation (\ref{Riccati eq for
curv-adapted}) in this case:
$$\mu_i'(t)=\mu_i(t)^2+\kappa_i.$$
With this equality, we are able to complete the proof of Remark
(\ref{Osserman}). For this purpose, we distinguish two cases. First,
if $M$ has constant principal curvatures, it follows directly from
the identity above that $M$ is totally isoparametric; Next, if $M$
is $1$-isoparametric, we can also take the $k$-th derivative of
$Q_1(t)$ to obtain a sequence of algebraic equations $P_k=0$,
$k=0,1,\cdots,n-1$, for $(\mu_1(t),\cdots,\mu_n(t))$ similar to
(\ref{Pk}). By means of Lemma \ref{zero locus}, we know that
$\mu_1(t),\cdots,\mu_n(t)$ are constant on $M_t$ and thus $M$ is
totally isoparametric, as desired.\hfill $\Box$

\begin{ack}
The authors would like to thank Professors Q.S. Chi, C.K. Peng and
G. Thorbergsson for their valuable suggestions and helpful comments
during the preparation of this paper. The authors also thank the referees for their useful suggestions.
\end{ack}

\end{document}